 \numberwithin{equation}{section}
\newtheorem{theorem}{Theorem}[section]
\newtheorem{lemma}[theorem]{Lemma}
\newtheorem{corollary}[theorem]{Corollary}
\theoremstyle{remark}
\newtheorem{remark}[theorem]{Remark}
\theoremstyle{definition}
\newtheorem{assumption}[theorem]{Assumption}
\newtheorem{example}[theorem]{Example}
\newtheorem{definition}[theorem]{Definition}
\newtheorem{proposition}[theorem]{Proposition}
\def\XXint#1#2#3{{\setbox0=\hbox{$#1{#2#3}{\int}$}
\vcenter{\hbox{$#2#3$}}\kern-.5\wd0}}
\newcommand\cbrk{\text{$]$\kern-.15em$]$}}
\newcommand\opar{\text{\,\raise.2ex\hbox{${\scriptstyle
|}$}\kern-.34em$($}}
\newcommand\cpar{\text{$)$\kern-.34em\raise.2ex\hbox{${\scriptstyle |}$}}\,}
\def\<{\langle}
\def\>{\rangle}
\def\eps{\varepsilon}
\def\E{{\mathbb E}}
\newcommand\bL{\mathbb{L}}
\newcommand\bR{\mathbb{R}}
\newcommand\bC{\mathbb{C}}
\newcommand\bH{\mathbb{H}}
\newcommand\bD{\mathbb{D}}
\newcommand\bS{\mathbb{S}}
\newcommand\bE{\mathbb{E}}
\newcommand\bP{\mathbb{P}}
\newcommand\cB{\mathcal{B}}
\newcommand\cF{\mathcal{F}}
\newcommand\cH{\mathcal{H}}
\newcommand\cP{\mathcal{P}}
\newcommand\cR{\mathcal{R}}
\def\R {{\mathbb R}}
\begin{document}

\title[Fractional Time SPDEs]
{Fractional time stochastic partial differential equations}

\author{{\bf Zhen-Qing Chen} }
\address{Department of Mathematics, University of Washington,
Seattle, WA 98195, USA}
\email{zqchen@uw.edu}
\thanks{Research of ZC is partially supported by NSF Grants NSF Grant DMS-1206276.}

\author{{\bf Kyeong-Hun Kim}  }
\address{Department of Mathematics, Korea University, 1 Anam-dong,
Sungbuk-gu, Seoul, 136-701, Republic of Korea}
\email{kyeonghun@korea.ac.kr}
\thanks{Research of KK was supported by Basic Science Research Program through the
National Research Foundation of Korea(NRF) funded by the Ministry of
Education, Science and Technology (2013020522)}

\author{{\bf Panki Kim}}
\address{Department of Mathematical Sciences and Research Institute of Mathematics,
Seoul National University,
San56-1 Shinrim-dong Kwanak-gu,
Seoul 151-747, Republic of Korea}
\email{pkim@snu.ac.kr}
\thanks{Research of PK was supported by the National Research Foundation of Korea(NRF) grant funded by the Korea government(MEST) (2013004822).}

\subjclass[2010]{Primary 60H15; secondary 35R60}

\keywords{Stochastic partial differential equations, fractional time derivative, $L_2$-theory}

\begin{abstract}
In this paper, we introduce a class of stochastic partial differential equations
(SPDEs) with fractional time-derivatives, and study the $L_2$-theory of the equations.
This class of SPDEs can be used to describe random effects on transport of particles in medium with
thermal memory or particles subject to sticking and trapping.
\end{abstract}

\maketitle

\section{\bf Introduction}

Fractional calculus has attracted lots of attentions in several fields including mathematics, physics, chemistry, engineering, hydrology
 and even finance and social sciences.
The classical heat equation $\partial_t u=\Delta u$ describes heat propagation in homogeneous medium.
The time-fractional diffusion equation $\partial^\beta_t u=\Delta  u$
with $0<\beta<1$  has been widely used
to model the anomalous diffusions  exhibiting  subdiffusive behavior,
due to particle sticking and trapping phenomena.
Here  the fractional time
derivative $\partial^{\beta}_t$ is the Caputo derivative of order $\beta\in (0,1)$, defined by
\begin{equation}\label{e:1.1}
\frac{\partial^\beta f(t)}{\partial t^\beta}=\frac{1}{\Gamma (1-\beta)} \frac{d}{dt}
\int_0^t (t-s)^{-\beta}  \left(f(s)-f(0)\right) ds
\end{equation}
where $\Gamma$ is the Gamma function defined by $\Gamma(\lambda):=
\int^{\infty}_0 t^{\lambda-1} e^{-t}dt$.

 Fractional diffusion equations are becoming popular in many areas of application \cite{GM, KS,  LS,MK,  P, SBMW, V}.
  So far, on the basis of either deterministic or probabilistic methods,
   the study of fractional calculus is mainly  restricted to deterministic equations; see \cite{EIK, MS, P, Za} and the references therein.
 In this paper, we introduce  and investigate a class of stochastic partial differential equations (SPDEs) with fractional time derivatives.

The SPDEs  with fractional time derivative
that we are going to study in this paper
 naturally arise from  the consideration of the heat equation in
a material of thermal memory. Let $u(t,x), e(t,x)$ and $\vec F(t,x)$ denote
the body temperature, internal energy and flux density, respectively.
Then the relations
\begin{equation}
  \label{heat 1}
\frac{\partial e}{\partial t}(t,x)=-{\rm div} \, \vec  F,
\end{equation}
$$
e(t,x)=\beta u(t,x), \quad \vec F(t,x)=-\lambda \nabla u(t,x) , \quad\quad
\beta, \lambda >0
$$
 yield the classical heat equation
$\beta \frac{\partial u}{\partial t}=\lambda \Delta u$. According to
the law of the classical heat equation, the speed of the heat flow
is infinite. However in real modeling, the propagation speed can be
finite because the heat flow can be disrupted by the response of the
material. It has been proved
(see e.g., \cite{V,LS})
that in a material with thermal memory
\begin{equation}
             \label{2014.5}
e(t,x)=\bar{\beta} u(t,x)+\int^t_0 n(t-s)u(s,x)ds
\end{equation}
holds with some appropriate constant $\bar{\beta}$ and kernel  $n$.
Typically, $n(t)$ is a positive decreasing function
which blows up near $t=0$, for instance $n(t)=t^{-\alpha}$ for $\alpha\in (0,1)$.  In
this case  the convolution implies that  nearer past affects the present
more. If in addition the internal energy $e(t,x)$ depends also on
past random effects, then \eqref{2014.5} becomes
\begin{equation}
             \label{2014.6}
e(t,x)=\bar{\beta} u(t,x)+\int^t_0 n(t-s)u(s,x)ds+\int^t_0
\ell(t-s)h(s, u(s,x))dW_s,
\end{equation}
where $W$ is a random process, such as Brownian motion, modeling
the random effects.
If $u(0, x)=0$, $\bar{\beta}=0$, $n(t)=\Gamma ( 1-\beta_1 )^{-1} t^{-\beta_1}$ and $\ell(t)=\Gamma (2- \beta_2 )^{-1}t^{1-\beta_2}$
for some constants $\beta_i \in (0,1)$,  then
 (\ref{2014.6}) (after differentiation in $t$) becomes
\begin{eqnarray}\label{e:1.5}
-{\rm div} \, \vec  F  = \frac{\partial e}{\partial t}(t,x)&=&\frac{1}{\Gamma (1- \beta_1 )}\frac{\partial }{\partial t} \int^t_0  (t-s)^{-\beta_1}    u(s,x)ds
\nonumber \\
&&+\frac1{\Gamma (2- \beta_2 )}\frac{\partial }{\partial t}\int^t_0
(t-s)^{1-\beta_2}h(s, u(s,x))dW_s.
\end{eqnarray}
Since
\begin{align*}
\int_0^t (t-s)^{-{\beta_2}} \int_0^s
h(a, u(a,x))
dW_ads&= \int_0^t
 \int_a^t  (t-s)^{-{\beta_2}}ds
h(a, u(a,x))
dW_a
\\&=\frac{1}{1-{\beta_2}} \int_0^t
 (t-a)^{1-{\beta_2}}
h(a, u(a,x))
dW_a,
\end{align*}
 by the definition of Caputo derivative (\ref{heat 1}) we have
\begin{align*}
\partial^{\beta_2}_t \int_0^t
h(s, u(s,x))
dW_s&=
\frac{1}{\Gamma (1-{\beta_2})}\frac{\partial }{\partial t}
\int_0^t (t-s)^{-{\beta_2}} \int_0^s
h(a, u(a,x))
dW_ads\\
&=
\frac{1}{\Gamma (2-{\beta_2})}\frac{\partial }{\partial t}
 \int_0^t
 (t-s)^{1-{\beta_2}}
h(s, u(s,x))
dW_s.
\end{align*}
Thus \eqref{e:1.5} can be rewritten as
the following stochastic partial differential equation
involving fractional time-derivative
\begin{equation}\label{e:1.4}
\partial^{\beta_1}_t u = {\rm div}  \, \vec F + \partial^{\beta_2}_t \int_0^t
h(s, u(s,x)) dW_s.
\end{equation}
It is this type of stochastic equations and its natural extensions
that will be studied in this paper.

Now let $(\Omega,\cF,\bP)$ be a complete probability space,
$\{\cF_{t},t\geq0\}$ be an increasing filtration of $\sigma$-fields
$\cF_{t}\subset\cF$, each of which contains all $(\cF,\bP)$-null sets.
We assume that on $\Omega$ we are given an independent family of one-dimensional
Wiener processes
 $W^{1}_{t}$, $W^{2}_{t},...$
relative to the filtration $\{\cF_{t},t\geq0\}$.

Motivated by \eqref{e:1.4},
 in this paper we consider the following quasi-linear
 SPDEs   of the non-divergence form type
\begin{eqnarray}
    \nonumber
  \partial^{\beta}_t u&=&\left(a^{ij}u_{x^ix^j}+b^iu_{x^i}+cu+f(u) \right)\\
   && +\sum_{k=1}^\infty\partial^{\gamma}_t \int^t_0 (\sigma^{ijk}u_{x^ix^j} +\mu^{ik}u_{x^i}+\nu^ku+g^k(u))\, d
   W^k_s
    \label{eqn 2014.1}
 \end{eqnarray}
 as well as of the divergence form type
 \begin{eqnarray}
                  \nonumber
  \partial^{\beta}_t u &=&\left( D_i\left(
a^{ij}u_{x^j}+b^iu+f^i(u)\right)+cu+h(u)
  \right) \\
   && + \sum_{k=1}^\infty\partial^{\gamma}_t \int^t_0 (\sigma^{ijk}u_{x^ix^j} +\mu^{ik}u_{x^i}+\nu^ku+g^k(u))\,
    dW^k_s,                  \label{2014.2}
 \end{eqnarray}
given for $\omega\in \Omega, \, t\geq0$  and $x\in \bR^d$, and study the $L_2$-theory
of the equations.
The constants  $\beta,\gamma\in (0,1)$ are assumed to satisfy the condition
\begin{equation}\label{e:1.7}
\gamma<\beta+1/2.
\end{equation}
The indices $i$ and $j$ go from $1$ to the dimension $d$  with the summation convention on $i,j$ being
enforced.
The coefficients $a^{ij}, b^i, c, \sigma^{ijk}, \mu^{ik}, \nu^k$  are functions depending on
$(\omega,t,x)$ and the functions $f,f^i, h, g^k$ depend on $(\omega,t,x)$ and the unknown $u$.  By considering infinitely many independent Brownian motions $W^k_t$ we cover equations driven by measure-valued processes, for instance, driven by space-time white noise (see Section 3.3). It is worth mentioning that unlike the classical SPDE, we allow the second-order derivatives of the unknown solution $u$ to  appear in the stochastic part when $\gamma<1/2$.

As for  stochastic differential equations (SDEs),
SPDE \eqref{eqn 2014.1} should be interpreted by its integral form
\begin{eqnarray*}
&&u(t,x)-u(0,x)\\
&=&\frac{1}{\Gamma(\beta)}\int^t_0
(t-s)^{\beta-1}\left(a^{ij}(s,x)u_{x^ix^j}(s,x)+\cdots+f(s,
u(s,x))\right)ds\\
&&+\frac{1}{\Gamma(1+\beta-\gamma)}
\int^t_0(t-s)^{\beta-\gamma}\left(\sigma^{ijk}(s,x)u_{x^ix^j}(s,x)+\cdots+g^k(s,
u(s,x))\right)dW^k_s.
\end{eqnarray*}
Similarly one can write down the integral version of SPDE \eqref{2014.2} but
in the distributional sense with respect to $x$ variable.

We next explain the constraint \eqref{e:1.7}.
A special case of the SPDEs for both \eqref{eqn 2014.1} and \eqref{2014.2}
is
\begin{equation}\label{e:1.8}
\partial^\beta_t u (t,x) =\Delta u(t,x) + \partial^\gamma_t \int_0^t g(s,x) dW_s,
\end{equation}
where $W$ is a one-dimensional Brownian motion.
For functions $h_1$ and $h_2$ on $\R_+$, we define its convolution
$h_1*h_2$ by
$$
h_1*h_2 (t)=\int^t_0 h_1(t-s)h_2(s)ds.
$$
Let
$$
k_{\beta}(t):=\Gamma(\beta)^{-1} t^{\beta-1},
$$
 and define
$$I^{\beta} \varphi =k_{\beta}*\varphi := \int_0^t k_\beta (t-s)\varphi (s) ds.
$$
One can easily check for any $\beta, \gamma \in (0,1)$,
$k_\beta * k_\gamma = k_{\beta+\gamma}$.
So in particular, we have
\begin{equation}   \label{e:1.10}
k_{\beta}*k_{1-\beta} (t)\equiv 1 .
\end{equation}
Suppose $u(x, t)$ is a solution of (\ref{e:1.8}).
In view of the definition
of Caputo derivative \eqref{e:1.1},
equation \eqref{e:1.8} is understood by its integral version
$$
k_{1-\beta}*(u(t,x)-u(0, x))=\int^t_0 \Delta u(s,x)ds+k_{1-\gamma}*\int^t_0
g(s)dW_s.
$$
By taking convolution with $k_{\beta}$ on both sides,   we
  get from \eqref{e:1.8} and \eqref{e:1.10}
$$\int^t_0
\left(u(s,x)-u(0,x)\right)\,ds=\left(I^{\beta}*\int^\cdot_0 \Delta
u(s,x)ds\right)(t)+\left(I^{\beta+1-\gamma}* \int^\cdot_0 g(s,x)dW_s\right)(t).
$$
Since the first two terms are differentiable in $t$, the last term above
should be differentiable in $t$.
Recall that
$$
I^{a}: C^{b}\to C^{a+b},
$$
and $\int^t_0 g(s)dW_s \in C^{1/2-\varepsilon}$ for any
$\varepsilon>0$. Thus we must have
$$
\beta+1-\gamma>1/2,
$$
which is equivalent to \eqref{e:1.7}.

\medskip

The main results of this paper are Theorems \ref{thm divtype} and \ref{main},
on the unique solvability of SPDEs \eqref{eqn 2014.1} and \eqref{2014.2}, and $L_2$-estimates of their solutions.
For SPDE \eqref{eqn 2014.1},  we establish in Theorem \ref{main}
the unique solvability in the space $L_2(\Omega\times[0,T], H^{\sigma}_2)$ for any $\sigma\in \bR$ under appropriate differentiability assumption on $x$-variable of the coefficients. On the other hand, the unique solvability of SPDE \eqref{2014.2} in the space $L_2(\Omega\times[0,T], H^{1}_2)$ is obtained in Theorem \ref{thm divtype} under the merely measurability condition of the coefficients $a^{ij}$.

The rest of the paper is organized as follows.
In Section 2 we present some preliminary results on the fractional derivatives and in Section 3 we introduce stochastic Banach spaces and few key estimates. Our main results for  (\ref{2014.2}) and (\ref{eqn 2014.1})   are presented and proved in Section 4 and Subsection 5.1, respectively.
Subsection 5.2 contains an application to an equation driven by space-time white noise.

We close this section with some notation. As usual, $\bR^{d}$
stands for the Euclidean space of points $x=(x^{1},...,x^{d})$.
 For $i=1,...,d$, multi-indices $\alpha=(\alpha_{1},...,\alpha_{d})$,
$\alpha_{i}\in\{0,1,2,...\}$, and functions $u(x)$ we set
$$
u_{x^{i}}=\partial u/\partial x^{i}=D_{i}u,\quad
D^{\alpha}u=D_{1}^{\alpha_{1}}\cdot...\cdot D^{\alpha_{d}}_{d}u,
\quad|\alpha|=\alpha_{1}+...+\alpha_{d}.
$$
We also use the notation $D^m_x$ for a partial derivative of order $m$
with respect to $x$. By  $C^{\infty}_0(\bR^d)$ we denote  the collection of
smooth
functions having compact support in $\bR^d$. For $p\geq 1$, let
$$
L_p=L_p(\bR^d):=\{ u : \bR^d \to \bR,  \| u \|^p_{L_p}:=\int_{\bR^d} |u(x)|^p dx < \infty   \}
$$
and we use the notation $(f,g)_{L_2}:= \int_{\bR^d} f(x)g(x) dx$.
We denote
$$\cF(g)(\xi):=\frac{1}{(2\pi)^{d/2}}\int_{\bR^d}e^{-i\xi\cdot
x}g(x)dx \quad \text{and}\quad \cF^{-1}(f)(\xi):=\frac{1}{(2\pi)^{d/2}}\int_{\bR^d}e^{i\xi\cdot
x}f(x)dx$$
  the Fourier transform of $g$ in $\bR^d$ and
  the  inverse Fourier transform of $f$ in $\bR^d$, respectively.

If we write $N=N(a,b,\cdots)$, this means that the
constant $N$ depends only on $a,b,\cdots$.  Throughout
this paper, for functions depending on $(\omega,t,x)$, usually the argument
$\omega \in \Omega$ will  be omitted.

\section{\bf Preliminary results}
                         \label{section 1}
First we introduce a few elementary facts on the fractional derivatives.
The reader
can find further details in \cite{EIK} and references therein.
Recall that
$\beta\in (0,1)$ and
$$
k_{\beta}(t):=t^{\beta-1}\Gamma(\beta)^{-1}, \quad t>0.
$$
Let $T>0$. If $f$ is absolutely continuous  on $[0, T]$ with $f(0)=0$ then
\begin{equation}
                         \label{2014.1.23.2}
\frac{d}{dt} (k_{\beta}*f)=k_{\beta}* \frac{d}{dt}f, \quad t \in [0, T].
\end{equation}
For functions $\varphi\in L_1([0,T])$,  the Riemann-Liouville fractional integral of the order $\beta\in (0,1)$ is defined by
$$
I^{\beta} \varphi (t)=k_{\beta}*\varphi (t)=\frac{1}{\Gamma(\beta)}\int^t_0 (t-s)^{\beta-1}\varphi(s)ds.
$$
Note that by Jensen's inequality
$$ |I^\beta \varphi (t)|^p \leq \frac{1}{(\Gamma (\beta))^p} \left(\frac{t^\beta}{\beta}\right)^{p-1}
\int_0^t (t-s)^{\beta-1} |\varphi (s)|^p ds.
$$
Thus it follows that
 for any $p\in [1,\infty]$,
\begin{equation}
                        \label{eqn 7.03.1}
\|I^{\beta}\varphi\|_{L_p([0,T])}\leq N(T,\beta) \|\varphi\|_{L_p([0,T])}.
\end{equation}
 Consequently, if $\varphi_n \to \varphi$  in $L_p([0,T])$ then $I^{\beta}\varphi_n$ also converges  to $I^{\beta} \varphi$ in $L_p([0,T])$.
Also one can prove that if $f_n(\omega,t)$ converges in probability uniformly in $[0,T]$ then so does $I^{\beta}f_n$.

If $I^{1-\beta}\varphi$ is absolutely continuous, then  Riemann-Liouville derivative of order $\beta$ is defined by
\begin{equation}
          \label{Riemann-Liouville}
D^{\beta}_t\varphi (t)=\frac{d}{dt} (I^{1-\beta}\varphi)(t).
\end{equation}
If $\varphi$ is continuous and $I^{1-\beta}\varphi$ is absolutely continuous, the generalized functional derivative (or the Caputo derivative)  of order $\beta$
is given by
\begin{eqnarray}
          \label{caputo}
\partial^{\beta}_t\varphi (t)&=&D^{\beta}_t(\varphi-\varphi(0))
= D^{\beta}_t\varphi (t)-\frac{\varphi(0)}{t^{\beta}\Gamma(1-\beta)}.
\end{eqnarray}
It is easy to check for any $\varphi\in L_1([0,T])$,
\begin{eqnarray}
          \label{e:DI}
D^{\beta}_tI^{\beta}\varphi =\varphi.
\end{eqnarray}
Furthermore, if
 $\varphi$ is absolutely continuous on $[0,T]$ then by \eqref{2014.1.23.2}
\begin{eqnarray} \label{e:pbetav}
\partial^{\beta}_t\varphi=I^{1-\beta}\frac{d}{dt}\varphi.
\end{eqnarray}
Thus by \eqref{e:DI}
\begin{eqnarray} \label{e:Dbetadt}
D^{1-\beta}_t\partial^{\beta}_t \varphi=\frac{d}{dt}\varphi, \quad \text{a.e.}.
\end{eqnarray}

Denote by
$$
E_{\beta}(z)=\sum_{k=0}^{\infty}\frac{z^k}{\Gamma(\beta k+1)}, \quad z \in \bC
$$
the Mittag-Leffler function.
We will also use the generalized Mittag-Leffler function
$$
E_{\beta,\gamma}(z):=\sum_{k=0}^{\infty}\frac{z^k}{\Gamma(\beta k+\gamma)}, \quad z \in \bC.
$$

We assume $\beta, \gamma \in (0,1)$.
It is well known (see e.g. (12) in  \cite[Theorem 1.3-4]{D}) that,
when $-1 < \gamma -\beta < 1$,
 $E_{\beta,\gamma}(t)$ is bounded on $(-\infty,0]$ and
\begin{equation}
          \label{mittag}
\lim_{\bR \ni t \to \infty}  t E_{\beta,\gamma}(-t) =\frac{1}{\Gamma (\gamma-\beta)}.
\end{equation}
Furthermore, for any constant $\lambda$,
\begin{equation}
              \label{frac heat}
\varphi(t):=E_{\beta}(\lambda t^{\beta})
\end{equation}
is a solution of the equation
$$
\partial^{\beta}_t \varphi=\lambda \varphi, \quad t>0
$$
with the initial condition $\varphi(0)=1$.

The following is a classical result. We provide a proof  for the readers' convenience.

\begin{lemma}\label{l:DaE}
Let $a\in (0,1)$ and $b \ge 0$,
 then
\begin{equation}
                 \label{seattle1}
 D_t^a E_{\beta}(-b
t^{\beta})=t^{-a}E_{\beta,1-a}(-bt^{\beta}),
\end{equation}
\begin{equation}
               \label{seattle2}
I^aE_{\beta}(-b
t^{\beta})=t^{a}E_{\beta,1+a}(-bt^{\beta}).
\end{equation}

\end{lemma}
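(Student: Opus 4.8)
The plan is to prove both identities by reducing everything to a single elementary computation, the fractional integral of a monomial, and then summing term-by-term over the power series defining $E_\beta$. The one analytic ingredient needed is the Beta-integral: for $\mu>-1$,
\begin{equation*}
I^a t^{\mu}=\frac{1}{\Gamma(a)}\int_0^t (t-s)^{a-1}s^{\mu}\,ds=\frac{\Gamma(\mu+1)}{\Gamma(\mu+1+a)}\,t^{\mu+a}.
\end{equation*}
Since $\beta k\ge 0>-1$, every term of $E_\beta(-bt^\beta)=\sum_{k=0}^{\infty}\frac{(-b)^k}{\Gamma(\beta k+1)}t^{\beta k}$ is covered.

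First I would establish the integral formula \eqref{seattle2}. Applying $I^a$ to the monomial $t^{\beta k}$ (so $\mu=\beta k$), the factor $\Gamma(\beta k+1)$ cancels exactly against the one produced by the Beta-integral, leaving
\begin{equation*}
I^a E_{\beta}(-bt^{\beta})=\sum_{k=0}^{\infty}\frac{(-b)^k}{\Gamma(\beta k+1+a)}\,t^{\beta k+a}=t^a\sum_{k=0}^{\infty}\frac{(-bt^{\beta})^k}{\Gamma(\beta k+(1+a))}=t^a E_{\beta,1+a}(-bt^{\beta}),
\end{equation*}
which is precisely \eqref{seattle2}.

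For the Riemann–Liouville derivative \eqref{seattle1}, I would invoke its definition \eqref{Riemann-Liouville}, $D_t^a\varphi=\frac{d}{dt}I^{1-a}\varphi$. Applying \eqref{seattle2} with $a$ replaced by $1-a\in(0,1)$ gives $I^{1-a}E_\beta(-bt^\beta)=\sum_{k}\frac{(-b)^k}{\Gamma(\beta k+2-a)}t^{\beta k+1-a}$. Differentiating term-by-term produces a factor $(\beta k+1-a)$ from $\frac{d}{dt}t^{\beta k+1-a}$, which cancels against the denominator via the functional equation $\Gamma(\beta k+2-a)=(\beta k+1-a)\Gamma(\beta k+1-a)$; hence
\begin{equation*}
D_t^a E_{\beta}(-bt^{\beta})=\sum_{k=0}^{\infty}\frac{(-b)^k}{\Gamma(\beta k+1-a)}\,t^{\beta k-a}=t^{-a}E_{\beta,1-a}(-bt^{\beta}),
\end{equation*}
establishing \eqref{seattle1}.

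The only point that requires care — and the main, though mild, obstacle — is justifying the interchange of the operators $I^a$ and $\frac{d}{dt}$ with the infinite sums. Because $E_\beta$ is entire, the coefficients $1/\Gamma(\beta k+c)$ decay faster than geometrically, so on any interval $[0,T]$ the defining series converges uniformly; then \eqref{eqn 7.03.1} with $p=\infty$ (continuity of $I^a$ on $L_\infty([0,T])$, which passes to uniform limits) legitimizes integrating term-by-term. Likewise, the termwise-differentiated series converges uniformly on compact subsets of $(0,T]$, which justifies differentiating under the sum for $t>0$. With these two interchanges in place the computations above are rigorous, completing the proof.
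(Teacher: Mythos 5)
Your proof is correct and follows essentially the same route as the paper: both reduce the identities to the action of $I^a$ (respectively $D_t^a$) on monomials $t^{\beta k}$ and sum the power series term by term. The only cosmetic differences are that you derive \eqref{seattle1} from \eqref{seattle2} via the definition $D_t^a=\frac{d}{dt}I^{1-a}$ instead of quoting the monomial formula \eqref{seattle3} directly, and you supply the (routine) uniform-convergence justification for the interchanges, which the paper omits.
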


\begin{proof}
We first prove (\ref{seattle1}). One can easily check   (see e.g.
\cite[(5.1.2)]{EIK})
\begin{equation}
         \label{seattle3}
D_t^at^{b-1}=\frac{\Gamma(b)}{\Gamma(b-a)}t^{b-a-1}, \quad a,b>0.
\end{equation}
Thus,
\begin{eqnarray*}
D_t^a E_{\beta}(-b t^{\beta})&=&D_t^a\left(\sum_{k=0}^\infty
\frac{(-1)^kb^{2k}t^{\beta k}}{\Gamma(\beta k+1)}\right)\\
&=&\sum_{k=0}^\infty\frac{(-1)^kb^{2k}}{\Gamma(\beta
k+1)}\frac{\Gamma(\beta k+1)}{\Gamma(\beta k+1-a)} t^{\beta
k-a}\\
&=&t^{-a}\sum_{k=0}^\infty\frac{(-bt^{\beta})^k}{\Gamma(\beta
k+1-a)}\\
&=&t^{-a}E_{\beta,1-a}(-bt^{\beta}).
\end{eqnarray*}

To prove (\ref{seattle2}), in place of (\ref{seattle3}), it is
enough to use
$$
I^at^{b-1}=\frac{\Gamma(b)}{\Gamma(b+a)}t^{b+a-1}, \quad a,b>0.
$$
 The lemma is proved.
\end{proof}

Define
$$
p(t,x)=\cF^{-1} (E_{\beta}(-|\xi|^{2}t^{\beta})), \quad q(t,x)=D^{1-\beta}_t p.
$$
Actually (\ref{mittag}) shows that if $d>1$ then $E_{\beta}(-|\xi|^2t^{\beta})\not\in L_1(\bR^d)$  for fixed $t>0$. Thus we understand $p(t,x)$ as the inverse transform   of a radial function  in the sense of  improper integral, or we can define $p(t,x)$ first as in \cite[Section 5.2.2]{EIK}  so that $p(t,\cdot)\in L_1(\bR^d)$ and $\cF(p(t,x))(\xi)=E_{\beta}(-|\xi|^2t^{\beta})$.

Since, for $x\neq 0$, $p(t,x)\to 0$ as $t\downarrow 0$ (see \cite{EIK}), the Riemann-Liouville derivative of
$p(\cdot, x)$
 coincides with the Caputo derivative of
$p(\cdot , x)$ for every $x\not= 0$.

\vspace{2mm}

Here is a list of other useful properties of $p$ and $q$.

\begin{lemma}
          \label{lemma list}
(i)\, For $(t,x)\in (0,T]\times \bR^d \setminus \{0\}$, we have $\partial^{\beta}_t p=D^{\beta}_t p=\Delta p$.

(ii) For each $x\neq 0$ and $m\leq 3$,
\begin{equation}
                    \label{eqn 9.21.1}
\lim_{t\to 0+} D^m_x p(t,x)=\lim_{t\to 0+}D^m_x q(t,x)=0.
\end{equation}

 (iii)\,  $D^m_x p(t,\cdot)$ is  integrable for each  $t>0$ if $m\leq 1$.

(iv) \, For each  $t>0$ and $m\leq 3$, $D^m_x q(t,\cdot)$ is integrable in $\bR^d$ uniformly on $[\varepsilon,T]$ for any $\varepsilon>0$.

(v) \, For each $x\neq 0$, $p(\cdot,x)$ is absolutely continuous and $\frac{
\partial}{ \partial t}p(t,x)\to 0$ as $t\downarrow 0$.
Moreover,
$\frac{\partial}{ \partial t}p(t,\cdot)$ is integrable in $\bR^d$  uniformly on $[\varepsilon,T]$ for any $\varepsilon>0$.

(vi)\, For any compact set $K\subset \bR^d\setminus \{0\}$ and $m\leq 3$, the functions $p, q, \frac{\partial}{ \partial t}p(t,\cdot), D^m p$ and $D^m q$ are continuous and bounded on $[0,T]\times K$.
\end{lemma}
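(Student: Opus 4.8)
The plan is to pass to the Fourier side, exploit a self-similar scaling, and reduce every assertion to decay/integrability statements about two fixed profile functions. First I would record the symbols $\cF(p)(t,\xi)=E_{\beta}(-|\xi|^2t^{\beta})$ and, using Lemma \ref{l:DaE} with $a=1-\beta$, $\cF(q)(t,\xi)=D_t^{1-\beta}E_{\beta}(-|\xi|^2t^{\beta})=t^{\beta-1}E_{\beta,\beta}(-|\xi|^2t^{\beta})$. The change of variables $\eta=\xi t^{\beta/2}$ in the inverse transform then yields the scaling identities
\[
p(t,x)=t^{-\beta d/2}P(xt^{-\beta/2}),\qquad q(t,x)=t^{\beta-1-\beta d/2}Q(xt^{-\beta/2}),
\]
where $P:=\cF^{-1}(E_{\beta}(-|\cdot|^2))$ and $Q:=\cF^{-1}(E_{\beta,\beta}(-|\cdot|^2))$, so that $D^m_xp(t,x)=t^{-\beta(d+m)/2}(D^mP)(xt^{-\beta/2})$ and $D^m_xq(t,x)=t^{\beta-1-\beta(d+m)/2}(D^mQ)(xt^{-\beta/2})$. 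Thus the whole lemma reduces to the behaviour of $D^mP$ and $D^mQ$ for $m\le 3$.

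The heart of the argument is a pair of facts about the profiles, which I would prove for $m\le 3$: (a) $D^mP$ and $D^mQ$ are continuous on $\bR^d\setminus\{0\}$ and decay faster than any power of $|y|$ as $|y|\to\infty$; and (b) near the origin $D^mP(y)=O(|y|^{2-d-m})$ and $D^mQ(y)=O(|y|^{4-d-m})$. For (a) I would start from $D^\alpha P(y)=(2\pi)^{-d/2}\int_{\bR^d}(i\xi)^\alpha e^{i\xi\cdot y}E_{\beta}(-|\xi|^2)\,d\xi$ and integrate by parts $2N$ times using $\Delta_\xi e^{i\xi\cdot y}=-|y|^2e^{i\xi\cdot y}$, producing the factor $|y|^{-2N}$ together with $\Delta_\xi^N[(i\xi)^\alpha E_{\beta}(-|\xi|^2)]$. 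Since $E_{\beta}(-|\cdot|^2)$ is smooth and, by the differentiated form of the asymptotic expansion behind \eqref{mittag}, its $j$-th radial derivative is $O(s^{-1-j})$, each application of $\Delta_\xi$ improves the decay by $|\xi|^{-2}$; choosing $N$ large makes the integrand integrable and yields decay faster than any polynomial. The origin estimates (b) come from the leading terms of the same expansions: the symbol of $P$ behaves like $|\xi|^{-2}$ (a Riesz kernel, giving $|y|^{2-d}$), whereas for $Q$ the coefficient $1/\Gamma(\beta-\beta)=0$ kills the $|\xi|^{-2}$ term, so $E_{\beta,\beta}(-|\xi|^2)=O(|\xi|^{-4})$ and the singularity improves to $|y|^{4-d}$; differentiating $m$ times sharpens the exponents as stated. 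This vanishing of the leading coefficient for $Q$ is exactly what produces the gap between the admissible orders $m\le 1$ for $p$ and $m\le 3$ for $q$.

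With (a)--(b) in hand the individual statements follow quickly. Parts (iii) and (iv) reduce, after the substitution $y=xt^{-\beta/2}$, to $\int_{\bR^d}|D^mP|\,dy<\infty$ for $m\le1$ and $\int_{\bR^d}|D^mQ|\,dy<\infty$ for $m\le3$, which hold because the integrable origin singularity (needing $2-d-m>-d$, resp.\ $4-d-m>-d$) combines with the rapid decay; the powers of $t$ in the scaling, $t^{-\beta m/2}$ and $t^{\beta-1-\beta m/2}$, are continuous and hence bounded on $[\varepsilon,T]$, giving the claimed uniformity. For fixed $x\neq0$ the argument $y=xt^{-\beta/2}\to\infty$ as $t\downarrow0$, so the rapid decay of $D^mP,D^mQ$ beats the polynomial prefactors and gives the limits in (ii) and, together with continuity on $(0,T]\times K$, the continuity and boundedness up to $t=0$ (with value $0$) in (vi). Part (v) rests on $\frac{\partial}{\partial t}p=\Delta q$, which I would obtain by differentiating the Mittag-Leffler series termwise to get $\frac{d}{dt}E_{\beta}(-bt^{\beta})=-bt^{\beta-1}E_{\beta,\beta}(-bt^{\beta})$ on the Fourier side; integrability of $\Delta q(t,\cdot)$ (part (iv) with $m=2$) and its vanishing as $t\downarrow0$ (part (ii) with $m=2$) then yield both the absolute continuity of $p(\cdot,x)$ and $\partial_tp(t,\cdot)\to0$. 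Finally (i) follows from \eqref{frac heat} with $\lambda=-|\xi|^2$, giving $\partial^{\beta}_t\cF(p)=-|\xi|^2\cF(p)=\cF(\Delta p)$ and hence $\partial^{\beta}_tp=\Delta p$ after inversion, while $\partial^{\beta}_tp=D^{\beta}_tp$ is immediate from \eqref{caputo} since $p(0+,x)=0$ for $x\neq0$.

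The main obstacle is the core estimate (a)--(b): one must simultaneously control the origin singularity and secure rapid decay, and in particular justify that the asymptotic expansions of $E_{\beta}$ and $E_{\beta,\beta}$ underlying \eqref{mittag} may be differentiated in the radial variable as many times as the integration-by-parts argument requires. Care is also needed to justify differentiating under, and inverting, the (improper, for $d>1$) Fourier integral defining $p$; I would handle this by working with the regularized $L_1$-definition of $p(t,\cdot)$ referenced from \cite{EIK} and by invoking the integrability already established in (iii) and (iv). The remaining bookkeeping---matching the exponents $2-d-m$ and $4-d-m$ against the dimension to pin down the thresholds $m\le1$ and $m\le3$---is routine once the two profile estimates are available.
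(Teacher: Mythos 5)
Your proposal is essentially correct, but it takes a genuinely different route from the paper: the paper offers no self-contained argument at all for this lemma, disposing of it in one sentence by citing \eqref{frac heat} for (i) and Propositions 5.1--5.2 and equations (5.2.6), (5.2.13), (5.2.44), (A.19) of \cite{EIK} for the rest, whereas you reconstruct the content of those cited results from scratch via the self-similar scaling $p(t,x)=t^{-\beta d/2}P(xt^{-\beta/2})$, $q(t,x)=t^{\beta-1-\beta d/2}Q(xt^{-\beta/2})$ and the asymptotics of the profiles $P=\cF^{-1}(E_{\beta}(-|\cdot|^2))$ and $Q=\cF^{-1}(E_{\beta,\beta}(-|\cdot|^2))$. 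Your central structural observation --- that the coefficient $1/\Gamma(\beta-\beta)=1/\Gamma(0)=0$ kills the $|\xi|^{-2}$ term in the expansion of $E_{\beta,\beta}$, so the symbol of $Q$ decays like $|\xi|^{-4}$ and the origin singularity of $Q$ is two orders milder than that of $P$ --- is exactly the mechanism that produces the thresholds $m\le 1$ versus $m\le 3$, and your exponent bookkeeping ($2-d-m>-d$ versus $4-d-m>-d$) matches the statement. What your approach buys is transparency and independence from \cite{EIK}; what it costs is that the two analytic inputs you correctly flag as the crux --- that the asymptotic expansion of $E_{\beta,\gamma}(-s)$ behind \eqref{mittag} may be differentiated term by term (needed for the repeated $\Delta_\xi$ integration by parts and the rapid decay of $D^mP$, $D^mQ$), and the rigorous passage from the tail of the symbol to the kernel's origin singularity, including logarithmic corrections in borderline dimensions (e.g.\ $d=2$, $m=0$ for $P$) and the interpretation of the improper Fourier integral for $d>1$ --- must themselves be supplied, and supplying them amounts to reproving the relevant parts of \cite[Chapter 5]{EIK}. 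These corrections do not affect any of the integrability or limit conclusions, so there is no genuine gap in the logic, only deferred standard technicalities; your derivation of $\partial_t p=\Delta q$ by termwise differentiation of the Mittag--Leffler series on the Fourier side is also a cleaner route to Lemma \ref{L:2.3} than the duality argument the paper uses there.
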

\begin{proof}
(i) is a consequence of (\ref{frac heat}). See (5.2.44) and (A.19) of \cite{EIK} for (v). Others can be found in Propositions 5,1 and 5.2 of \cite{EIK}. In particular, (vi) is a consequence of (5.2.6) and (5.2.13) of \cite{EIK}.
\end{proof}

We need the following integration by parts formula.

\begin{lemma}\label{L:2.6}
Suppose that  $f$ is absolutely continuous  on $[0, T]$ with $f(T)=0$ and  $g$ is continuous  and $I^{1-\beta}g$  is absolutely continuous on $[0, T]$.
Then
$$
\int_0^T f(t)D^\beta_t g(t) dt = \int_0^T G(t) D^\beta_t F(t) dt,
$$
where $F(t)=f(T-t)$ and $G(t)=g(T-t)$.
\end{lemma}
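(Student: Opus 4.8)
The plan is to transform both sides into one and the same double integral over the triangle $\{0\le s\le t\le T\}$. Starting from the left-hand side, I would use the definition \eqref{Riemann-Liouville} to write $D^\beta_t g=\frac{d}{dt}(I^{1-\beta}g)$, where $I^{1-\beta}g$ is absolutely continuous by hypothesis. Since $g$ is continuous on $[0,T]$, it is bounded, and the elementary estimate $|I^{1-\beta}g(t)|\le \|g\|_\infty\, t^{1-\beta}/\Gamma(2-\beta)$ shows that $(I^{1-\beta}g)(0)=0$. As $f$ and $I^{1-\beta}g$ are both absolutely continuous, ordinary integration by parts gives
$$
\int_0^T f(t)\,\frac{d}{dt}(I^{1-\beta}g)(t)\,dt=\Big[f\cdot I^{1-\beta}g\Big]_0^T-\int_0^T f'(t)\,(I^{1-\beta}g)(t)\,dt,
$$
and the boundary term vanishes because $f(T)=0$ and $(I^{1-\beta}g)(0)=0$. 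Hence the left-hand side equals $-\int_0^T f'(t)\,(I^{1-\beta}g)(t)\,dt$.

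Next I would expand the convolution $I^{1-\beta}g(t)=\int_0^t k_{1-\beta}(t-s)g(s)\,ds$ and interchange the order of integration. Fubini's theorem applies because $g$ is bounded, $f'\in L_1([0,T])$, and $\int_0^t k_{1-\beta}(t-s)\,ds\le T^{1-\beta}/\Gamma(2-\beta)$ is uniformly bounded; this yields
$$
\int_0^T f(t)D^\beta_t g(t)\,dt=-\int_0^T\!\!\int_0^t f'(t)\,k_{1-\beta}(t-s)\,g(s)\,ds\,dt .
$$

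For the right-hand side, note that $F(t)=f(T-t)$ is absolutely continuous with $F(0)=f(T)=0$, so the Caputo and Riemann-Liouville derivatives of $F$ coincide, and by \eqref{caputo} and \eqref{e:pbetav} we get $D^\beta_t F=\partial^\beta_t F=I^{1-\beta}F'$, with $F'(t)=-f'(T-t)$. Substituting $G(t)=g(T-t)$ gives
$$
\int_0^T G(t)D^\beta_t F(t)\,dt=-\int_0^T\!\!\int_0^t g(T-t)\,k_{1-\beta}(t-s)\,f'(T-s)\,ds\,dt .
$$
Finally I would apply the change of variables $u=T-t$, $v=T-s$, which maps the triangle $\{0\le s\le t\le T\}$ bijectively onto $\{0\le u\le v\le T\}$, preserves area, and satisfies $t-s=v-u$; this turns the last display into exactly the double integral obtained for the left-hand side, completing the proof.

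The steps are all elementary, and the only places demanding care are the verification that $(I^{1-\beta}g)(0)=0$ (which uses continuity, hence boundedness, of $g$ near the origin), the vanishing of the boundary terms in the integration by parts, and the justification of Fubini's theorem; the identity $D^\beta_t F=I^{1-\beta}F'$ on the right-hand side is valid precisely because $F(0)=0$. I expect the bookkeeping in the change of variables, together with checking these integrability conditions, to be the main (though routine) obstacle.
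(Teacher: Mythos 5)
Your proof is correct and follows essentially the same route as the paper: integrate by parts using $f(T)=0$ (resp.\ $F(0)=0$ together with $D^\beta_t F=I^{1-\beta}F'$), justify Fubini by the same boundedness estimate on $g$ and integrability of $f'$, and match the two resulting double integrals by a reflection of variables. The only cosmetic difference is that the paper parametrizes the convolution by $\theta=t-s$ before applying Fubini, whereas you keep the $(s,t)$ coordinates and perform the substitution $(u,v)=(T-t,T-s)$ at the end; the computations are identical.
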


\begin{proof}
Note that
\begin{eqnarray*}
 \int_0^T |f'(t)| \left( \int_0^t \theta^{-\beta}
 |g(t-\theta )| d\theta\right) dt  \le \|g\|_{L_\infty([0,T])}
 \int_0^T |f'(t)| dt  \int_0^T \theta^{-\beta}
d\theta <\infty.
\end{eqnarray*}
Thus,
 using $f(T)=0$, the integration by part and the Fubini's Theorem, we get
\begin{eqnarray}
\int_0^T f(t) D^\beta_t g(t) dt
&=& \int_0^T f(t) \frac{d}{dt} (I^{1-\beta}  g) (t) dt  \nonumber\\
&=& f(t) (I^{ 1-\beta }  g) (t) \Big|_0^T -\int_0^T f'(t) (I^{1-\beta } g)(t) dt  \nonumber\\
&=& - \frac{1}{\Gamma (1-\beta)} \int_0^T f'(t) \left( \int_0^t \theta^{-\beta}
 g(t-\theta ) d\theta\right) dt \nonumber\\
 &=& -  \frac{1}{\Gamma (1-\beta)} \int_0^T \theta^{-\beta}   \left( \int_\theta^T
f'(t)  g(t-\theta ) dt \right) d\theta \label{e:sd} .
\end{eqnarray}
 As $F(0)=f(T)=0$ and $f$ is absolutely continuous, by \eqref{caputo} and \eqref{e:pbetav} we have
 $D^\beta_t F = \partial^\beta_t F=(I^{1-\beta} F')$. So by the integration by part and the Fubini's Theorem
  \begin{eqnarray*}
\int_0^TG(t)  D^\beta_t F(t) dt
&=&  \int_0^T G(t) (I^{1-\beta} F') (t) dt \\
&=& \frac{1}{\Gamma (1-\beta )} \int_0^T G(t) \left( \int_0^t \theta^{-\beta}
F'(t-\theta ) d\theta \right) dt \\
&=&  \frac{1}{\Gamma (1-\beta )} \int_0^T \theta^{-\beta}  \left( \int_\theta^T G(t)
F'(t-\theta ) dt\right) d\theta  \\
&=& - \frac{1}{\Gamma (1-\beta )} \int_0^T \theta^{-\beta}  \left( \int_\theta^T
f' (T-t+\theta ) g(T-t) dt\right) d\theta  \\
&=& - \frac{1}{\Gamma (1-\beta )} \int_0^T \theta^{-\beta}  \left( \int_\theta^T
f' (s) g(s-\theta) ds\right) d\theta,
\end{eqnarray*}
which is $
\int_0^T f(s) D^\beta_t g(s) ds$ by \eqref{e:sd}.
This proves the lemma.
\end{proof}

\begin{lemma}\label{L:2.3}
For each $(t, x)\in (0,
\infty) \times \bR^d\setminus\{0\}$,
\begin{equation}
               \label{eqn lap}
\Delta q (t,x)=\frac{\partial}{\partial t}p (t,x).
\end{equation}
\end{lemma}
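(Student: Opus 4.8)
The plan is to reduce the identity to the composition rule \eqref{e:Dbetadt} by commuting the spatial Laplacian through the purely temporal operator defining $q$. Recall from its definition that $q=D^{1-\beta}_t p=\frac{d}{dt}\bigl(I^{\beta} p\bigr)$, since $D^{1-\beta}_t=\frac{d}{dt}\circ I^{\beta}$. Hence I would first establish
$$
\Delta q=\Delta D^{1-\beta}_t p=D^{1-\beta}_t \Delta p ,
$$
that is, that $\Delta$ commutes with $D^{1-\beta}_t$. Because $D^{1-\beta}_t$ acts only in the $t$-variable through the convolution integral $I^{\beta}$ followed by a $t$-derivative, while $\Delta$ is spatial, this is a matter of differentiating under the integral sign. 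The justification is furnished by Lemma \ref{lemma list}(vi), which gives that $p$ and its spatial derivatives $D^m_x p$ with $m\le 3$ are continuous and bounded on $[0,T]\times K$ for every compact $K\subset\bR^d\setminus\{0\}$; this makes $I^{\beta}\Delta p$ well defined and legitimizes exchanging the two limiting operations in a neighborhood of any fixed $(t,x)$ with $x\neq 0$.

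Next I would invoke Lemma \ref{lemma list}(i), which yields $\Delta p=D^{\beta}_t p$ for $x\neq 0$. Moreover, as recorded in the text preceding the lemma, for $x\neq 0$ one has $p(t,x)\to 0$ as $t\downarrow 0$, so the Riemann--Liouville derivative of $p(\cdot,x)$ coincides with its Caputo derivative, i.e.\ $D^{\beta}_t p=\partial^{\beta}_t p$. Substituting into the previous display gives
$$
\Delta q=D^{1-\beta}_t \partial^{\beta}_t p .
$$
Finally, by Lemma \ref{lemma list}(v) the function $p(\cdot,x)$ is absolutely continuous for each fixed $x\neq 0$, so the composition identity \eqref{e:Dbetadt} applies and produces $D^{1-\beta}_t\partial^{\beta}_t p=\frac{d}{dt}p=\frac{\partial}{\partial t}p$. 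Chaining the three displays yields $\Delta q=\frac{\partial}{\partial t}p$, as claimed.

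The step I expect to be the main obstacle is the commutation $\Delta D^{1-\beta}_t p=D^{1-\beta}_t\Delta p$: one must differentiate twice in $x$ underneath the time convolution defining $I^{\beta}$ and then once in $t$, which hinges on the uniform continuity and integrability of the spatial derivatives of $p$ (and of $q$) away from the origin, exactly the content of Lemma \ref{lemma list}(iv)--(vi). An alternative route that sidesteps the interchange is to pass to the Fourier side: with $\wh p(t,\xi)=E_{\beta}(-|\xi|^2 t^{\beta})$, Lemma \ref{l:DaE} (taken with $a=1-\beta$) gives $\wh q(t,\xi)=t^{\beta-1}E_{\beta,\beta}(-|\xi|^2 t^{\beta})$, and a direct term-by-term differentiation of the Mittag--Leffler series shows $-|\xi|^2\wh q(t,\xi)=\partial_t\wh p(t,\xi)$; inverting the transform then gives the identity, though some care is needed since $\wh p(t,\cdot)\notin L_1(\bR^d)$ when $d>1$ and the inversion must be read as an improper integral.
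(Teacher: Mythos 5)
Your chain of identities is exactly the one underlying the paper's argument: $\Delta q = D^{1-\beta}_t\Delta p = D^{1-\beta}_t\partial^{\beta}_t p = \tfrac{\partial}{\partial t}p$, using Lemma \ref{lemma list}(i), the vanishing of $p(t,x)$ as $t\downarrow 0$ for $x\neq 0$, absolute continuity from Lemma \ref{lemma list}(v), and \eqref{e:Dbetadt}. Where you diverge from the paper is precisely at the step you yourself flag as the main obstacle, the commutation $\Delta D^{1-\beta}_t p = D^{1-\beta}_t\Delta p$. The paper does not attempt this pointwise: it reduces the claim to an a.e.\ identity (both sides being continuous by Lemma \ref{lemma list}(vi)), tests against $\phi(t)\psi(x)$, moves $\Delta$ onto $\psi$ by spatial integration by parts and moves $D^{1-\beta}_t$ onto the reflected test function via the fractional integration-by-parts formula of Lemma \ref{L:2.6}, applies \eqref{e:Dbetadt} there, and then undoes both moves. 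That duality argument is the whole content of the paper's proof; your version replaces it with a direct interchange of limits.

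As written, that interchange is a gap, not because the route is wrong but because the justification offered does not cover it. Since $D^{1-\beta}_t=\tfrac{d}{dt}\circ I^{\beta}$, pushing $\Delta$ through $I^{\beta}$ is indeed differentiation under the (time) integral sign and is covered by the local boundedness and continuity of $D^2_x p$ in Lemma \ref{lemma list}(vi). But pushing $\Delta$ through the outer $\tfrac{d}{dt}$ is an equality of mixed partial derivatives of $F:=I^{\beta}p$, namely $D^2_x\partial_t F=\partial_t D^2_x F$, and this is not a dominated-convergence statement; it requires a Schwarz-type theorem whose hypotheses you must verify. They can in fact be verified from Lemma \ref{lemma list}(vi) (one knows $\partial_t F=q$ and $D^2_x F=I^{\beta}D^2_x p$ exist, and $D^2_x q$ exists and is continuous on $[0,T]\times K$, which is the continuity of one mixed partial that the theorem needs), so your argument can be completed --- but you should state which interchange theorem you are invoking and check its hypotheses rather than asserting that Lemma \ref{lemma list}(iv)--(vi) ``legitimizes exchanging the two limiting operations.'' Your Fourier alternative is also viable in spirit (it is how $p$ and $q$ are defined), but, as you note, $\wh{p}(t,\cdot)\notin L_1(\bR^d)$ for $d>1$, so the inversion and the term-by-term differentiation would each need their own justification; the paper's test-function argument is what lets it avoid both of these delicate points.
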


\begin{proof}
Since both are continuous it is enough to prove that  the equality holds for almost all $(t, x)$. Let $\phi (t)$ and $\psi (x)$ be smooth functions with compact support in $(0, T)$ and $\bR^d\setminus\{0\}$
respectively. Define $\Phi (s):=\phi (T-s)$.
Since $\phi (t)$ is smooth function with compact support in $(0, T)$,
using  Lemma \ref{lemma list}(i), Lemma  \ref{L:2.6} and \eqref{e:Dbetadt}, for every $x \in \bR^d$
\begin{eqnarray}
 \int^T_0  D^{1-\beta}_t  \Phi(s)   \Delta p(T-s,x) \,ds
&=&\int^T_0 D^{1-\beta}_t  \Phi(s)  \left(\partial^{\beta}_t p(\cdot, x)\right) (T-s) \,ds  \nonumber\\
&=&  \int^T_0   \phi (s ) \left( D^{1-\beta}_t  \partial^{\beta}_t p(\cdot, x) \right) (s) \,ds \nonumber \\
&=&\int^T_0   \phi( s )  \frac{\partial}{\partial s} p(s,x) \,ds. \label{e:fnnew}
\end{eqnarray}
Recall that  $D^2_x q$, $D^2_x p$  and $\frac{\partial}{\partial s}p$ are locally integrable in $\bR^d\setminus\{0\}$ uniformly on the support of $\phi(t)$.
By  the integration by parts in $x$, Lemma \ref{lemma list}(vi), Lemma \ref{L:2.6}, \eqref{e:fnnew} and the Fubini's Theorem,
\begin{eqnarray*}
\int_{\bR^d}\int^T_0 \phi(s)  \psi (x) \Delta q (s,x) \,dsdx  &=&\int^T_0\int_{\bR^d} \phi(s) \Delta \psi (x)  q (s,x) \,dxds\\
&=&\int_{\bR^d} \Delta \psi(x)\left( \int_0^T \phi (s) \, D^{1-\beta}_t p(s,x) \,ds \right)dx\\
&=&\int_{\bR^d} \Delta \psi (x) \left( \int^T_0 D^{1-\beta}_t  \Phi (s)   p(T-s,x) \,ds \right)dx\\
&=&\int_{\bR^d} \psi (x)  \left( \int^T_0  D^{1-\beta}_t  \Phi(s)   \Delta p(T-s,x) \,ds \right)dx \\
&=&\int_{\bR^d}\int^T_0   \phi( s ) \psi (x) \frac{\partial}{\partial s} p(s,x) \,dsdx.
\end{eqnarray*}
Since $\phi(t)$ and $\psi(x)$ are arbitrary,  the lemma is proved.
\end{proof}

\section{\bf Key Estimates}

In this section, we first define a stochastic Banach space and establish key lemmas. Then we study the $L_2$-theory of a model equation for SPDEs with fractional time-derivatives.

For  $n=0,1,2,...$, define the Banach spaces
$$
H^n_2:=H^n_2(\bR^d) =\left\{u: u, D_xu, \cdots, D_x^n u \in L_2\right\}.
$$
 In general, for $\sigma \in \bR$ define the space
$H^{\sigma}_2=H^{\sigma}_2(\bR^d)=(1-\Delta)^{-\sigma/2}L_2$  as the set of all distributions $u$ on $\bR^d$ such that
$(1-\Delta)^{\sigma/2}u\in L_2$. For $u\in H^\sigma_2$, we define
\begin{equation}   \label{e:2.22}
\|u\|_{H^{\sigma}_2}:=\|(1-\Delta)^{\sigma/2}u\|_{L_2}
:=\|\cF^{-1}[(1+|\xi|^2)^{\sigma/2}\cF(u)(\xi)]\|_{L_2}.
\end{equation}
Similarly for $\ell_2$-valued $g=(g^1, g^2, \dots)$,
\begin{equation}   \label{e:2.22n}
\|g\|_{H^{\sigma}_2(\ell_2)}:=\||(1-\Delta)^{\sigma/2}g|_{\ell_2}\|_{L_2}
:=\||\cF^{-1}[(1+|\xi|^2)^{\sigma/2}\cF(g)(\xi)]|_{\ell_2}\|_{L_2}.
\end{equation}

Let $\cP$ be the predictable $\sigma$-field and $\cP^{d\bP\times dt}$ be the completion of $\cP$ with respect to  $d\bP\times dt$.  For each $\sigma \in \bR$, define the Banach space
$$
\bH^{\sigma}_2(T):=L_2(\Omega\times [0,T],\cP, H^{\sigma}_2).
$$
That is,  $u\in \bH^{\sigma}_2(T)$ if
$u$ is an $H^{\sigma}_2$-valued
 $\cP^{d\bP\times dt}$-measurable  process defined on
$\Omega \times [0,T]$ so that
$$
\|u\|_{\bH^{\sigma}_2(T)}:= \left( \E\int^T_0\|u (t, \cdot )\|^2_{H^{\sigma}_2}\,dt\right)^{1/2}<\infty.
$$
 For an $\ell_2$-valued process
$g=(g^1,g^2,...)$, we write $g\in \bH^{\sigma}_2(T,\ell_2)$ if  $g^k\in \bH^{\sigma}_2(T)$
for every $k\geq 1$ and
$$
\|g\|_{\bH^{\sigma}_2(T,\ell_2)}:=
\left(  \E \int^T_0\|g\|^2_{H^{\sigma}_2(\ell_2)}\,dt \right)^{1/2}<\infty.
$$
Denote $\bL_2(T)=\bH^0_2(T)$ and $\bL_2(T,\ell_2)=\bH^0_2(T,\ell_2)$.  Write $g\in \bH^{\infty}_0(T,\ell_2)$ if $g^k=0$ for all sufficiently large $k$,  and each $g^k$ is of the type
$$
g^k=\sum_{i=1}^nI_{(\tau_{i-1},\tau_i]}(t) g^{ik}(x)
$$
where $\tau_i$ are bounded stopping times with respect to $\cF_t$ and $g^{ik} \in C^{\infty}_0 (\bR^d)$.
It is known (\cite[Theorem 3.10]{Kr99}) that $\bH^{\infty}_0(T,\ell_2)$ is dense in $\bH^{\sigma}_2(T,\ell_2)$ for any $\sigma$.

Finally
we use $U^{\sigma}_2$ to denote the family of  $H^{\sigma}_2(\bR^d)$-valued $\cF_0$-measurable random variables $u_0$ having
$$
\|u_0\|_{U^{\sigma}_2}:=\left( \E  \|u_0\|^2_{H^{\sigma}_2}
\right)^{1/2}<\infty.
$$

\begin{lemma} \label{lemma 2}
Suppose $a>0$.
{\rm (i)} Let $h=(h^1,h^2,\cdots) \in L_2(\Omega\times [0,T],\cP, \ell_2)$. Then the equality
 $$
 I^{a}(\sum_{k=1}^{\infty} \int^{\cdot}_0 h^k (s) dW^k_s)(t)=\sum_{k=1}^{\infty} (I^{a} \int^{\cdot}_0 h^k (s) dW^k_s)(t)
 $$
 holds  for all $t\leq T$ (a.s.) and also in $L_2(\Omega\times [0,T])$.

 {\rm (ii)}  Suppose $
 h_n
 =(h^1_n,h^2_n,\cdots)$ converges to  $h=(h^1,h^2,\cdots)$  in $L_2(\Omega\times [0,T],\cP, \ell_2)$ as $n\to \infty$. Then, as $n\to \infty$,
 \begin{equation}
                      \label{eqn 8.30.2}
 \sum_{k=1}^\infty I^{a}\int^{\cdot}_0 h^k_n dW^k_s  \quad
  \hbox{converges to} \quad  \sum_{k=1}^\infty I^{a}\int^{\cdot}_0 h^k (s) dW^k_s
\end{equation}
in probability uniformly on $[0,T]$.
\end{lemma}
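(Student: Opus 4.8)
The plan is to reduce both parts to the boundedness of $I^a$ together with Doob's maximal inequality and the It\^o isometry for the martingales $M^k(t):=\int_0^t h^k(s)\,dW^k_s$. Write $M(t):=\sum_{k=1}^\infty M^k(t)$ and $S_n(t):=\sum_{k=1}^n M^k(t)$, so that $S_n$ is a genuine finite sum and, by linearity of $I^a$, $I^aS_n(t)=\sum_{k=1}^n I^aM^k(t)$ with no interchange issue. The substance of part (i) is then to justify passing $n\to\infty$ inside $I^a$, and the quantitative input I would use is the elementary pointwise bound
\[
\sup_{t\le T}\bigl|I^a\varphi(t)\bigr|\le \frac{T^a}{a\,\Gamma(a)}\,\sup_{s\le T}|\varphi(s)|,
\]
immediate from $I^a\varphi(t)=\Gamma(a)^{-1}\int_0^t(t-s)^{a-1}\varphi(s)\,ds$ and $\int_0^t(t-s)^{a-1}\,ds=t^a/a$.

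First I would control the tails of the series of martingales. For $m<n$ the difference $S_n-S_m=\sum_{k=m+1}^n M^k$ is again a continuous martingale, so Doob's inequality and the It\^o isometry (using independence of the $W^k$ to kill cross terms) give
\[
\E\Bigl[\sup_{t\le T}\bigl|S_n(t)-S_m(t)\bigr|^2\Bigr]\le 4\sum_{k=m+1}^n\E\int_0^T|h^k(s)|^2\,ds,
\]
and the right side tends to $0$ since $h\in L_2(\Omega\times[0,T],\ell_2)$. Applying the displayed sup-bound to $\varphi=S_n-S_m$ transfers this to $\E[\sup_{t\le T}|I^aS_n-I^aS_m|^2]\to 0$, so $\{\sum_{k\le n}I^aM^k\}_n$ is Cauchy in the complete space $L_2(\Omega;C([0,T]))$ and converges there; this yields the asserted $L_2(\Omega\times[0,T])$ convergence and, along a subsequence, uniform-in-$t$ convergence for a.e.\ $\omega$, so $\sum_{k=1}^\infty I^aM^k(t)$ is well defined for all $t\le T$ (a.s.). The same Doob estimate gives $S_n\to M$ in $L_2(\Omega;C([0,T]))$, whence $I^aS_n\to I^aM$ in that space; by uniqueness of limits the two limits coincide, establishing $I^aM(t)=\sum_{k=1}^\infty I^aM^k(t)$ for all $t\le T$ (a.s.) and in $L_2(\Omega\times[0,T])$.

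For part (ii) I would first rewrite both sides via part (i): with $M_n(t):=\sum_{k=1}^\infty\int_0^t h^k_n\,dW^k_s$ and $M(t)$ as above, the two series equal $I^aM_n$ and $I^aM$. Since $M_n-M=\sum_{k=1}^\infty\int_0^\cdot(h^k_n-h^k)\,dW^k_s$, the identical Doob/It\^o estimate gives
\[
\E\Bigl[\sup_{t\le T}|M_n(t)-M(t)|^2\Bigr]\le 4\,\|h_n-h\|_{L_2(\Omega\times[0,T],\ell_2)}^2\longrightarrow 0,
\]
so $M_n\to M$ in probability uniformly on $[0,T]$. The sup-bound for $I^a$ (equivalently, the stated fact that $I^a$ preserves convergence in probability uniform on $[0,T]$) then propagates this to $I^aM_n\to I^aM$ in probability uniformly on $[0,T]$, which is the claim.

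The only genuinely delicate point is the interchange of the infinite sum with the Lebesgue integral against the kernel $k_a$, i.e.\ upgrading ``for each fixed $t$, a.s.''\ to ``for all $t\le T$ simultaneously, a.s.'' I expect this to be handled cleanly by working throughout in $C([0,T])$: the uniform sup-bound on $I^a$ together with completeness of $L_2(\Omega;C([0,T]))$ are exactly what make the limit a bona fide continuous-in-$t$ process and identify it with $I^aM$, so no separate argument about exceptional null sets depending on $t$ is required.
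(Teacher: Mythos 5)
Your proof is correct and follows essentially the same route as the paper: the paper likewise combines the elementary bound $\sup_{t\le T}|I^a\varphi(t)|\le N(T,a)\sup_{s\le T}|\varphi(s)|$ with Doob/Burkholder--Davis--Gundy and the It\^o isometry to show that the tails $\sum_{k=n}^m I^a\int_0^\cdot h^k\,dW^k_s$ vanish uniformly in $t$ in $L_2(\Omega)$, and then identifies the limit with $I^a$ of the full stochastic sum by continuity. The only cosmetic difference is that you work in $L_2(\Omega;C([0,T]))$ throughout, whereas the paper first gets the identity a.e.\ in $L_2(\Omega\times[0,T])$ via the $L_p$-boundedness of $I^a$ and then upgrades to ``for all $t\le T$ a.s.''\ by the uniform convergence; both arguments are sound and rest on the same estimates.
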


\begin{proof}
Since the series $\sum_{k=1}^{\infty} \int^t_0 h^k (s) dW^k_t$ converges in $L_2(\Omega \times [0,T])$, by (\ref{eqn 7.03.1}) we have
\begin{eqnarray*}
I^{a}\Big( \sum_{k=1}^{\infty} \int^{\cdot}_0 h^k (s) dW^k_s \Big)(t)&=&\lim_{n\to \infty} I^{a}\Big( \sum_{k=1}^{n} \int^{\cdot}_0 h^k (s) dW^k_s \Big)(t) \\
&=&\lim_{n\to \infty} \sum_{k=1}^{n} I^{a} \Big( \int^{\cdot}_0 h^k (s) dW^k_s \Big)(t) \quad \quad \text{in}\,\,\,L_2(\Omega \times [0,T]).
\end{eqnarray*}
Thus,  the series $\sum_{k=1}^{\infty} I^{a} \int^t_0 h^k (s) dW^k_s$ converges in $L_2(\Omega\times [0,T])$ and
\begin{equation}
          \label{eqn 7.04.1}
I^{a}\sum_{k=1}^{\infty} \int^t_0 h^k (s) dW^k_s=\sum_{k=1}^{\infty} I^{a} \int^t_0 h^k (s) dW^k_s
\end{equation}
in $L_2(\Omega\times [0,T])$, and thus the equality holds (a.e.).
Also by Burkholder-Davis-Gundy inequality
 \begin{eqnarray}
&&\bE \Big[ \sup_{t\leq T} \Big|\sum_{k=n}^{m} I^{a} \int^t_0 h^k (s) dW^k_s
\Big|^2 \Big] \nonumber\\
&\leq& N\bE \Big[ \sup_{t\leq T} \Big|\sum_{k=n}^{m}  \int^t_0 h^k (s) dW^k_s
\Big|^2 \Big] \leq N \sum_{k=n}^{m} \bE \Big[ \int^T_0    |h^k(s)|^2 ds \Big]
\to 0 \label{eqn 8.29.1}
\end{eqnarray}
as $n,m\to \infty$. Therefore the series $\sum_{k=1}^{\infty} I^{a} \int^t_0 h^k (s) dW^k_s$ converges in probability uniformly on $[0,T]$.
   It follows that both sides of  (\ref{eqn 7.04.1}) are continuous,  and therefore the equality above holds for all $t\leq T$ (a.s.).
\end{proof}

\begin{remark}
                           \label{remark 1}
Let $\sigma\in \bR$. Suppose $g_n \to g$ in $\bH^{\sigma}_2(T,\ell_2)$ as $n\to \infty$, and $\phi\in C^{\infty}_0 (\bR^d)$. Then $(g_n (t, \cdot),\phi)_{L_2} \to (g(t, \cdot),\phi)_{L_2}$ in $L_2(\Omega\times [0,T],\cP, \ell_2)$,
and therefore  Lemma \ref{lemma 2}(ii) holds  with $h_n(t):=(g_n(t, \cdot),\phi)_{L_2}$ and  $h(t):=(g(t, \cdot),\phi)_{L_2}$.
\end{remark}

\medskip

\begin{lemma}
               \label{derivative}
Let $\alpha>1/2$ and $g\in \bH^{\infty}_0(T,\ell_2)$. Then $I^{\alpha} \sum_{k=1}^\infty\int^{\cdot}_0 g^k (s) dW^k_s$ is differentiable in $t$ and  (a.s.) for all $t\leq T$
$$
\frac{\partial}{ \partial t}(I^{\alpha} \sum_{k=1}^\infty \int^{\cdot}_0 g^k (s) dW^k_s)(t)=\frac{1}{\Gamma(\alpha)}\sum_{k=1}^\infty \int^t_0 (t-s)^{\alpha-1}g^k(s)dW^k_s.
$$

\end{lemma}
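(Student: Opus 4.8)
The plan is to reduce to a single noise term and then recognize $I^{\alpha}\int_0^{\cdot}g^k\,dW^k$ as the $t$-integral of an explicit process whose continuity I can control. Since $g\in\bH^{\infty}_0(T,\ell_2)$, only finitely many $g^k$ are nonzero and each is a bounded, simple predictable process; hence the sum over $k$ is finite and commutes with $I^{\alpha}$ and with $\partial_t$, so it suffices to treat a single term $M^k_t:=\int_0^t g^k(s)\,dW^k_s$ (the computation may be read pointwise in $x$, since each $g^{ik}\in C^\infty_0(\bR^d)$, or as an $H^\sigma_2$-valued stochastic integral). Writing out $I^{\alpha}M^k(t)=\frac{1}{\Gamma(\alpha)}\int_0^t(t-r)^{\alpha-1}M^k_r\,dr$ and applying the stochastic Fubini theorem (legitimate because $g^k$ is bounded and $(t-\cdot)^{\alpha-1}\in L_1([0,t])$), I would first establish
\[
I^{\alpha}M^k(t)=\frac{1}{\Gamma(\alpha)}\int_0^t (t-r)^{\alpha-1}\Big(\int_0^r g^k(s)\,dW^k_s\Big)\,dr
=\frac{1}{\Gamma(\alpha+1)}\int_0^t (t-s)^{\alpha}g^k(s)\,dW^k_s,
\]
using $\int_s^t (t-r)^{\alpha-1}\,dr=(t-s)^{\alpha}/\alpha$.

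Next I would introduce the candidate derivative
\[
\Phi^k(t):=\frac{1}{\Gamma(\alpha)}\int_0^t (t-s)^{\alpha-1}g^k(s)\,dW^k_s,
\]
which is a well-defined stochastic integral precisely because $\alpha>1/2$ makes the integrand square integrable near $s=t$ (as $\int_0^t (t-s)^{2\alpha-2}\,ds<\infty$). A second application of stochastic Fubini, now with $\int_s^t (r-s)^{\alpha-1}\,dr=(t-s)^{\alpha}/\alpha$, gives
\[
\int_0^t \Phi^k(r)\,dr=\frac{1}{\Gamma(\alpha+1)}\int_0^t (t-s)^{\alpha}g^k(s)\,dW^k_s=I^{\alpha}M^k(t).
\]
Thus $I^{\alpha}M^k=\int_0^{\cdot}\Phi^k(r)\,dr$, and once $\Phi^k$ is shown to have a continuous-in-$t$ modification, the fundamental theorem of calculus yields that $I^{\alpha}M^k$ is differentiable with $\partial_t I^{\alpha}M^k(t)=\Phi^k(t)$; summing the finitely many $k$ gives the asserted identity for all $t\le T$ a.s.

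The substantive step, and the main obstacle, is the continuity of $\Phi^k$ in $t$. For $t_1<t_2\le T$ I would split $\Phi^k(t_2)-\Phi^k(t_1)$ into the integral over $[0,t_1]$ of the kernel difference $(t_2-s)^{\alpha-1}-(t_1-s)^{\alpha-1}$ and the integral over $[t_1,t_2]$ of $(t_2-s)^{\alpha-1}$, and then invoke the Burkholder--Davis--Gundy inequality together with the boundedness of $g^k$ to reduce to the deterministic estimate
\[
\int_0^{t_1}\!\big((t_1-s)^{\alpha-1}-(t_2-s)^{\alpha-1}\big)^2\,ds+\int_{t_1}^{t_2}\!(t_2-s)^{2\alpha-2}\,ds\le C\,|t_2-t_1|^{2\alpha-1},
\]
which holds exactly because $\alpha>1/2$. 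Raising to the $p$-th power via BDG then gives $\bE|\Phi^k(t_2)-\Phi^k(t_1)|^{2p}\le C_p|t_2-t_1|^{(2\alpha-1)p}$, and choosing $p$ large enough that $(2\alpha-1)p>1$ lets Kolmogorov's continuity criterion produce the desired continuous modification (in fact H\"older continuous of any order below $\alpha-1/2$). Everything outside this kernel-difference estimate is routine, and this is the only place where the hypothesis $\alpha>1/2$ is genuinely used.
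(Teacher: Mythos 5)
Your proposal is correct and follows essentially the same route as the paper: both arguments apply the stochastic Fubini theorem to identify $I^{\alpha}\sum_k\int_0^{\cdot}g^k\,dW^k_s$ and the $t$-integral of the candidate derivative with the common expression $\Gamma(\alpha+1)^{-1}\sum_k\int_0^t(t-s)^{\alpha}g^k(s)\,dW^k_s$, and then conclude by the fundamental theorem of calculus. The one thing you add is an explicit verification, via Burkholder--Davis--Gundy and Kolmogorov's continuity criterion, that the candidate derivative has a continuous modification --- a point the paper leaves implicit but which is what upgrades ``antiderivative'' to ``differentiable at every $t\le T$'' (your claimed exponent $2\alpha-1$ in the kernel estimate should be read as $\min\{2\alpha-1,2\}$ when $\alpha\ge 3/2$, which changes nothing in the conclusion).
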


\begin{proof}
We integrate the right hand side and then use the stochastic Fubini's theorem (see e.g. \cite{Kr11})  to get
\begin{eqnarray*}
\frac{1}{\Gamma(\alpha)} \sum_{k=1}^\infty\int^t_0\int^s_0 (s-r)^{\alpha-1}g^k(r)dW^k_r ds &=&\frac{1}{\Gamma(\alpha)}\sum_{k=1}^\infty \int^t_0\int^t_r (s-r)^{\alpha-1} ds g^k(r)dW^k_r\\
&=&\frac{1}{\alpha \Gamma(\alpha)} \sum_{k=1}^\infty\int^t_0 (t-r)^{\alpha}  g^k(r)dW^k_r.
\end{eqnarray*}
Similarly, by Lemma \ref{lemma 2}(i)
\begin{eqnarray*}
I^{\alpha} \sum_{k=1}^\infty \Big(\int^{\cdot}_0 g^k (s) dW^k_s \Big) (t)&=&\frac{1}{\Gamma(\alpha)}\sum_{k=1}^\infty\int^t_0 (t-s)^{\alpha-1}\left(\int^s_0 g^k(r)dW^k_r\right) ds\\
&=&\frac{1}{\alpha \Gamma(\alpha)} \sum_{k=1}^\infty\int^t_0 (t-r)^{\alpha}  g^k(r)dW^k_r.
\end{eqnarray*}
The lemma is proved.
\end{proof}
Lemma \ref{derivative} can be easily extended for any $g\in \bL_2(T,\ell_2)$.

For the remainder of this paper, we assume that \eqref{e:1.7} holds.
For $a\in \bR$, denote $a_+=\max\{a,0\}$.
Define
\begin{equation}
       \label{gamma0}
\gamma_0:=\frac{(2\gamma-1)_+}{\beta}<2.
\end{equation}
Note that since $\gamma<\beta+1/2$ we have
\begin{equation}
       \label{gamma1}
\gamma_0<2, \quad \text{and}\quad \gamma_0=0 \quad \text{if}\,\, \gamma\leq 1/2.
\end{equation}

\begin{definition}
                    \label{def 3.23}
We write $u\in \cH^{\sigma+2}_2(T)$ if $u\in \bH^{\sigma+2}_2(T)$, $u(0)\in U^{\sigma+1}_2$,  and for some $f\in \bH^{\sigma}_2(T)$ and $g\in \bH^{\sigma+\gamma_0}_2(T,\ell_2)$ it holds that
\begin{equation}
                   \label{eqn 7.08.1}
 \partial^{\beta}_tu (t,x)=f (t,x)
    +\sum_{k=1}^{\infty} \partial^{\gamma}_t \int^t_0 g^k (s,x)\, dW^k_s
 \end{equation}
 in the sense of distributions. That is, for any $\phi\in C^{\infty}_0 (\bR^d)$
 the equality
\begin{equation}
                  \label{sense of solution}
 (I^{1-\beta}(u-u(0))(t), \phi)_{L_2}=\int^t_0(f(s, \cdot), \phi)_{L_2}ds+\sum_{k=1}^{\infty} (I^{1-\gamma} \int^{\cdot}_0 (g^k(s, \cdot),\phi)_{L_2}\, dW^k_s)(t)
 \end{equation}
 holds for all $t\leq T$ (a.s.). In this case we write
 $$
 f=\bD u, \quad g=\bS u
 $$
 and define
\begin{equation}\label{e:3.10}
 \|u\|_{\cH^{\sigma+2}_2(T)}=\|u(0)\|_{U^{\sigma+1}_2}+\|u\|_{\bH^{\sigma+2}_2(T)}+\|\bD u\|_{\bH^{\sigma}_2(T)}+\|\bS u\|_{\bH^{\sigma+\gamma_0}_2(T,\ell_2)}.
 \end{equation}
 Finally define
 \begin{align}\label{e:ch0}
 \cH^{\sigma+2}_{2,0}(T)=\cH^{\sigma+2}_2(T) \cap \{u: u(0)=0\}.
 \end{align}
\end{definition}

\begin{remark}
By (\ref{e:1.10}), (\ref{2014.1.23.2}) and Lemma \ref{derivative}, (\ref{sense of solution}) is equivalent to
\begin{eqnarray*}
 (u(t, \cdot)-u(0, \cdot), \phi)_{L_2}&=&\frac{1}{\Gamma(\beta)}\int^t_0(t-s)^{\beta-1}(f(s, \cdot), \phi)_{L_2}ds\\
 &&+\frac{1}{\Gamma(1+\beta-\gamma)}\sum_{k=1}^{\infty} \int^t_0 (t-s)^{\beta-\gamma}  (g^k(s, \cdot),\phi)_{L_2}\, dW^k_s.
 \end{eqnarray*}
\end{remark}

\begin{lemma}
               \label{Banach}
{\rm (i)} $\cH^{\sigma+2}_2(T)$ is a Banach space.

{\rm (ii)} Let $u\in \cH^{\sigma+2}_2(T)$. Then $u$ is a continuous $H^{\sigma}_2$-valued process.

{\rm (iii)}  Assume that  $u\in \cH^{2}_2(T)$ and (\ref{eqn 7.08.1}) holds. Then (a.s.)
\begin{equation}
                      \label{eqn 8.27.1}
(k_{1-\beta}*\|u-u(0)-v\|^2_{L_2})(t)\leq 2\int^t_0 (f(s, \cdot),u(s, \cdot)-u(0, \cdot)-v(s, \cdot))_{L_2}ds \quad \hbox{for }  t\leq T ,
\end{equation}
where
\begin{eqnarray}\label{e:defv}
v(t,x)=\Gamma(1+\beta-\gamma)^{-1}\sum_{k=1}^\infty \int^t_0 (t-s)^{\beta-\gamma} g^k(s,x) dW^k_s.
\end{eqnarray}

\end{lemma}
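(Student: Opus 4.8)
The plan is to treat (iii) as the analytic core, since its proof isolates the one genuinely new difficulty, and to build (i) and (ii) around it. The key preliminary observation is that subtracting $v$ kills the stochastic part. Testing \eqref{sense of solution} against $\phi\in C_0^\infty(\bR^d)$, rewriting it in the convolution form recorded in the Remark following Definition \ref{def 3.23}, and comparing with the definition \eqref{e:defv} of $v$, the stochastic integrals cancel and one is left with $(u(t,\cdot)-u(0,\cdot)-v(t,\cdot),\phi)_{L_2}=(I^\beta f(t,\cdot),\phi)_{L_2}$ for every $\phi$. Hence, as an $L_2$-valued process,
$$ w:=u-u(0)-v=I^\beta f, \qquad\text{so that}\quad w(0,\cdot)=0,\quad D^\beta_t w=f $$
(the last identity by \eqref{e:DI}, with $D^\beta_t w=\partial^\beta_t w$ since $w(0)=0$). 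Thus \eqref{eqn 8.27.1} is a purely pathwise statement with no stochastic integral left in it: it reads $(k_{1-\beta}*\|w\|^2_{L_2})(t)\le 2\int_0^t(D^\beta_s w,w)_{L_2}\,ds$.

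I would deduce this from the pointwise \emph{fractional coercivity} inequality $(w(s,\cdot),D^\beta_s w(s,\cdot))_{L_2}\ge \tfrac12 D^\beta_s\|w(s,\cdot)\|^2_{L_2}$ for a.e.\ $s$, the fractional analogue of $(w,w')=\tfrac12\tfrac{d}{ds}\|w\|^2$. Granting it, I integrate over $[0,t]$; since $D^\beta_s=\tfrac{d}{ds}(k_{1-\beta}*\,\cdot\,)$ and $\|w(0,\cdot)\|^2=0$, the right side integrates to $\tfrac12(k_{1-\beta}*\|w\|^2_{L_2})(t)$, and rearranging gives \eqref{eqn 8.27.1}. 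The main obstacle is the pointwise inequality itself. For $w$ absolutely continuous in $t$ it follows from the exact identity
$$ (w,D^\beta_s w)_{L_2}-\tfrac12 D^\beta_s\|w\|^2_{L_2}=\frac{\beta}{2\Gamma(1-\beta)}\int_0^s\frac{\|w(s,\cdot)-w(r,\cdot)\|^2_{L_2}}{(s-r)^{\beta+1}}\,dr+\frac{\|w(s,\cdot)\|^2_{L_2}}{2\Gamma(1-\beta)\,s^{\beta}}, $$
obtained by writing $D^\beta_s w=\tfrac{1}{\Gamma(1-\beta)}\int_0^s(s-r)^{-\beta}\partial_r w\,dr$ and integrating by parts in $r$ using $w(0)=0$; both terms on the right are nonnegative. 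Since here $w=I^\beta f$ with only $f\in\bL_2(T)$, I would first prove the inequality for $f_n\to f$ smooth in $t$, for which $w_n=I^\beta f_n$ is absolutely continuous, and then pass to the limit using the continuity of $I^\beta$ from \eqref{eqn 7.03.1}.

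For (i), the vector-space structure is clear and the substantive content is completeness. Given a Cauchy sequence $u_n$ in $\cH^{\sigma+2}_2(T)$, by \eqref{e:3.10} the components $u_n(0),u_n,\bD u_n,\bS u_n$ are Cauchy in the complete spaces $U^{\sigma+1}_2,\bH^{\sigma+2}_2(T),\bH^{\sigma}_2(T),\bH^{\sigma+\gamma_0}_2(T,\ell_2)$ and converge to some $u_0^\ast,u^\ast,f^\ast,g^\ast$. To see $u^\ast\in\cH^{\sigma+2}_2(T)$ with $u^\ast(0)=u_0^\ast$, $\bD u^\ast=f^\ast$, $\bS u^\ast=g^\ast$, I pass to the limit in \eqref{sense of solution}: the deterministic terms converge because $I^{1-\beta}$ and $\int_0^t(\cdot)\,ds$ are continuous in the relevant $L_2$ norms, while the stochastic term converges by Remark \ref{remark 1} together with Lemma \ref{lemma 2}(ii). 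The point demanding care is that \eqref{e:3.10} really is a norm, i.e.\ that $\bD u$ and $\bS u$ are unambiguously determined by $u$; equivalently, if $u\equiv0$ and $u(0)=0$ admit a representation \eqref{eqn 7.08.1} with data $(f,g)$, then $f=g=0$. I would not argue one test function at a time (a single $\phi$ does not pin down the pair), but exploit the genuine $L_2(\bR^d)$-valued cancellation $I^\beta f=-v$ together with the representation of solutions by the fundamental solutions $p,q$ of Section 2 to read off $f=g=0$; this injectivity of $(\bD,\bS)$ is the one delicate step.

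For (ii), I would produce a continuous $H^\sigma_2$-valued modification by combining a supremum estimate with approximation. Writing $u=u(0)+I^\beta f+v$, the stochastic convolution $v$ of \eqref{e:defv} has a continuous $H^\sigma_2$-valued version by the Burkholder--Davis--Gundy inequality and a Kolmogorov-type argument, the time kernel $(t-s)^{\beta-\gamma}$ being square integrable thanks to $\gamma<\beta+1/2$ and the regularity shift $\gamma_0$ being exactly what places $v$ in $H^\sigma_2$; the deterministic part $w=I^\beta f$ is controlled through \eqref{eqn 8.27.1}. Approximating $g$ by elements of $\bH^\infty_0(T,\ell_2)$ and $f$ by data smooth in $t$ yields $u_m\to u$ in $\cH^{\sigma+2}_2(T)$ with each $u_m$ manifestly continuous, and the a priori estimate upgrades this to convergence uniform in $t$ in probability, so $u$ has a continuous $H^\sigma_2$-valued version. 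Overall I expect the coercivity inequality behind (iii) to be the true heart of the lemma, with the bookkeeping of the supremum estimate in (ii) and the injectivity of $(\bD,\bS)$ in (i) as the secondary points requiring attention.
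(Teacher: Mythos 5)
Your proposal is correct and follows the paper's overall architecture: for (iii) the same reduction $w=u-u(0)-v=I^\beta f$ with $\partial^\beta_t w=f$, for (i) completeness by passing to the limit in \eqref{sense of solution} via Remark \ref{remark 1} and continuity in $t$ of both sides, and for (ii) the mild representation \eqref{eqn 10}, $u=u(0)+k_\beta*f+v$. The one genuine divergence is in the heart of (iii): the paper does not prove the fractional coercivity inequality but quotes \cite[Lemma 2.1]{Za} in the form $\kappa*\|w\|^2_{L_2}(t)\le 2\int_0^t(\tfrac{\partial}{\partial s}(\kappa*w),w)_{L_2}\,ds$ for positive decreasing kernels $\kappa$, and then regularizes the \emph{kernel}, taking $\kappa_n\in H^1_1([0,T])$ with $\kappa_n\to k_{1-\beta}$ in $L_1$ and $\tfrac{\partial}{\partial t}(\kappa_n*w)\to\tfrac{\partial}{\partial t}(k_{1-\beta}*w)$ in $L_2([0,T],L_2)$; you instead prove the inequality from scratch via the exact fundamental identity with its two nonnegative remainder terms (which is correct --- it is precisely the identity underlying Zacher's lemma) and regularize the \emph{data} $f$, using \eqref{eqn 7.03.1} to pass to the limit. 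Both regularizations work; yours buys a self-contained argument at the cost of re-deriving the cited lemma, while the paper's kernel approximation avoids having to produce smooth-in-$t$ approximants of $w$. Two smaller remarks: in (ii) the paper stops at the representation \eqref{eqn 10} and simply asserts continuity of each summand, so your BDG/Kolmogorov elaboration is an expansion rather than a deviation; and the well-definedness of $(\bD u,\bS u)$, which you rightly flag as needed for \eqref{e:3.10} to be a norm, is not addressed in the paper's proof of (i) at all --- your sketch of how to obtain it (cancellation $I^\beta f=-v$ and separating the absolutely continuous part from the stochastic convolution) is plausible but is left incomplete in your write-up as well.
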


\begin{proof}
(i) We only need to prove the completeness of the space. Let $u_n$, $n=1,2,\cdots$, be a Cauchy sequence in $\cH^{\sigma+2}_2(T)$ with
$$
f_n=\bD u_n, \quad g_n=\bS u_n.
$$
Then there exist $u, f, g, u_0$ so that $u_n, f_n, g_n, u_n(0)$ converge to $u,f,g,u_0$ respectively in their corresponding spaces. To prove $u_n \to u$ in $\cH^{\sigma+2}_2(T)$,  it suffices to  show  (\ref{sense of solution}) holds for all $t\leq T$ (a.s.).  Since the series
$\sum_{k=1}^\infty \int^t_0 (g^k_n (s),\phi) dW^k_s$ converges in probability uniformly on $[0,T]$, so does $(I^{1-\gamma} \sum_{k=1}^\infty \int^\cdot_0 (g^k_n(s),\phi) dW^k_s)(t)$. By Remark \ref{remark 1},  considering the limit of
$$
 (I^{1-\beta}(u_n-u_n(0)))(t), \phi)_{L_2}=\int^t_0(f_n(s, \cdot), \phi)_{L_2}ds+\sum_{k=1}^{\infty} (I^{1-\gamma} \int^{\cdot}_0 (g^k_n(s, \cdot),\phi)_{L_2}\, dW^k_s)(t)
$$
for $t\leq T$,
we get  (\ref{sense of solution})    for all $t\leq T$ (a.s.) since both sides of (\ref{sense of solution})  are continuous in $t$.

(ii) We only prove the case $\sigma=0$. The general case is covered by applying $(1-\Delta)^{\sigma/2}$ to (\ref{eqn 7.08.1}).  Denote $f=\bD u$ and $g=\bS u$. Notice that as an $L_2(\bR^d)$-valued process, $u(t)-u(0)$ satisfies
$$
k_{1-\beta}*(u(\cdot,x)-u(0,x))(t)=\int^t_0 f(s,x) ds+(k_{1-\gamma}* (\sum_{k=1}^\infty \int^{\cdot}_0 g^k(s,x)dW^k_s))(t) \quad \hbox{all } t\leq T \, (a.s.).
$$
Taking the convolution with $k_{\beta}$ and using
$$
\frac{\partial}{ \partial t}(k_{\beta}*\int^{\cdot}_0 f(s,x)ds)(t)=(k_{\beta}*f)(t,x),
$$
$$
\frac{\partial}{ \partial t}(k_{\beta}*k_{1-\gamma}*(\sum_{k=1}^\infty \int^{\cdot}_0 g^k (s,x)dw_s)) (t)=\sum_{k=1}^\infty \frac{1}{\Gamma(1+\beta-\gamma)}\int^{t}_0 (t-s)^{\beta-\gamma}g^k(s,x)dW^k_s,
$$
where the second equality is from Lemma \ref{derivative},
 we get for all $t\leq T$ (a.s.)
\begin{equation}
               \label{eqn 10}
u(t,x)-u(0,x)=(k_{\beta}*f)(t,x)+\sum_{k=1}^\infty \frac{1}{\Gamma(1+\beta-\gamma)}\int^{t}_0 (t-s)^{\beta-\gamma}g^k(s,x)dW^k_s.
\end{equation}
Hence the claim follows.

(iii)  Denote $w(t,x)=u(t,x)-u(0,x)-v(t,x)$. Then we have $\partial^{\beta}_tw(t,x)=f(t,x)$. Now we use the fact  (see e.g. \cite[Lemma 2.1]{Za}) that if $\kappa$ is a positive decreasing function on $[0,T]$
then
$$
\kappa*\|w\|^2_{L_2} (t)\leq 2\int^t_0 (\frac{\partial}{ \partial s}(\kappa*w)(s, \cdot),w(s, \cdot))_{L_2}ds.
$$
 We take (see \cite{Za}) a sequence of such functions $\kappa_n\in H^1_1([0,T])$ so that $\kappa_n \to k_{1-\beta}$ in $L_1([0,T]$ and $\frac{\partial}{ \partial t}(\kappa_n*w)\to \frac{\partial}{ \partial t}(k_{1-\beta}*w)$ in $L_2([0,T],L_2)$. Finally for (\ref{eqn 8.27.1}) it is enough to note that
$$
\kappa_n *\|w\|^2_{L_2} \to k_{1-\beta}*\|w\|^2_{L_2} \quad \text{in} \quad L_1([0,T])
$$
and both sides of (\ref{eqn 8.27.1}) are continuous in $t$. The lemma is proved.
\end{proof}

Recall that for any $\sigma$,
$$
\|u\|^2_{\bH^{\sigma}_2(t)}:=\bE\int^t_0\|u(s)\|^2_{H^{\sigma}_2}ds.
$$
\begin{proposition}
               \label{lemm 8.28.6}
Let $u\in \cH^{\sigma+2}_2(T)$.
Then for any $t\leq T$,
\begin{equation}
             \label{eqn 1.27.1}
(k_{1-\beta}*\bE\|u\|^2_{H^{\sigma}_2})(t)\leq N (\bE\|u(0)\|^2_{H^{\sigma}_2}+\|\bD u\|^2_{\bH^{\sigma}_2(t)}+\|\bS u\|^2_{\bH^{\sigma}_2(t,\ell_2)}+\|u\|^2_{\bH^{\sigma}_2(t)})
\leq N \|u\|^2_{\cH^{\sigma+2}_2(t)},
\end{equation}
where $N$ depends only on $T, \beta$ and $\gamma$. In particular, for any $t\leq T$,
\begin{equation}
                \label{eqn 1.27.2}
\|u\|^2_{\bH^{\sigma}_2(t)}\leq N \int^t_0 (t-s)^{-1+\beta}\|u\|^2_{\cH^{\sigma+2}_2(s)}\, ds.
\end{equation}
\end{proposition}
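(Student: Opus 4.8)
The plan is to establish the first inequality in \eqref{eqn 1.27.1}; the second is immediate, since each of its four summands is dominated by the corresponding summand of $\|u\|^2_{\cH^{\sigma+2}_2(t)}$ (using $H^{\sigma+1}_2,H^{\sigma+2}_2\hookrightarrow H^{\sigma}_2$ and $\gamma_0\ge0$), and \eqref{eqn 1.27.2} will be deduced from \eqref{eqn 1.27.1} at the end. Since $(1-\Delta)^{\sigma/2}$ is an isometry of $H^{\sigma}_2$ onto $L_2$ commuting with $\partial^{\beta}_t$, $\partial^{\gamma}_t$ and the stochastic integral, applying it to \eqref{eqn 7.08.1} reduces matters to $\sigma=0$. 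Write $f=\bD u$, $g=\bS u$, recall from \eqref{eqn 10} that $u(t)-u(0)=(k_{\beta}*f)(t)+v(t)$ with $v$ as in \eqref{e:defv}, and set $w:=u-u(0)-v=k_{\beta}*f$, so that $w(0)=0$ and $\partial^{\beta}_t w=f$. Using $\|u\|^2_{L_2}\le 3(\|u(0)\|^2_{L_2}+\|v\|^2_{L_2}+\|w\|^2_{L_2})$ together with the linearity and positivity of $k_{1-\beta}*\,\cdot\,$, I would estimate the three pieces separately.

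The initial term is trivial: $(k_{1-\beta}*\bE\|u(0)\|^2_{L_2})(t)=\bE\|u(0)\|^2_{L_2}\,t^{1-\beta}/\Gamma(2-\beta)\le N(T,\beta)\,\bE\|u(0)\|^2_{L_2}$. For the forcing piece I would apply Lemma \ref{Banach}(iii) directly to $w$ and take expectations, obtaining
$$
(k_{1-\beta}*\bE\|w\|^2_{L_2})(t)\le 2\int_0^t\bE(f(s,\cdot),w(s,\cdot))_{L_2}\,ds\le \|\bD u\|^2_{\bH^0_2(t)}+\int_0^t\bE\|w(s)\|^2_{L_2}\,ds
$$
by Cauchy--Schwarz and Young's inequality; then $\int_0^t\bE\|w\|^2_{L_2}\le 3(\|u\|^2_{\bH^0_2(t)}+T\,\bE\|u(0)\|^2_{L_2}+\|v\|^2_{\bH^0_2(t)})$. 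The time-integrated norm of $v$ is harmless: by It\^o's isometry and Fubini,
$$
\|v\|^2_{\bH^0_2(t)}=N\int_0^t\bE\|g(r)\|^2_{L_2(\ell_2)}\,(t-r)^{1+2\beta-2\gamma}\,dr\le N\,T^{1+2\beta-2\gamma}\,\|g\|^2_{\bH^0_2(t,\ell_2)},
$$
where $1+2\beta-2\gamma>0$ is exactly \eqref{e:1.7}. Thus the $u(0)$- and $w$-contributions are controlled by the right-hand side of \eqref{eqn 1.27.1}.

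The delicate term is the stochastic convolution itself, $(k_{1-\beta}*\bE\|v\|^2_{L_2})(t)$. By It\^o's isometry $\bE\|v(s)\|^2_{L_2}=\Gamma(1+\beta-\gamma)^{-2}\int_0^s(s-r)^{2\beta-2\gamma}\bE\|g(r)\|^2_{L_2(\ell_2)}\,dr$; convolving with $k_{1-\beta}$ and interchanging the integrations produces the inner Beta integral
$$
\int_r^t(t-s)^{-\beta}(s-r)^{2\beta-2\gamma}\,ds=B(1-\beta,1+2\beta-2\gamma)\,(t-r)^{1+\beta-2\gamma},
$$
both Beta-arguments being positive precisely because $\beta<1$ and \eqref{e:1.7} gives $1+2\beta-2\gamma>0$. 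Hence $(k_{1-\beta}*\bE\|v\|^2_{L_2})(t)=N\int_0^t\bE\|g(r)\|^2_{L_2(\ell_2)}\,(t-r)^{1+\beta-2\gamma}\,dr$. When $\gamma\le1/2$ (so $\gamma_0=0$) one has $1+\beta-2\gamma\ge\beta>0$, the weight is bounded on $[0,T]$, and this is at once $\le N\|g\|^2_{\bH^0_2(t,\ell_2)}$, completing \eqref{eqn 1.27.1}. The hard range is $\gamma>1/2$, where $1+\beta-2\gamma$ may be negative and the kernel $(t-r)^{1+\beta-2\gamma}$ is singular at $r=t$; this is exactly the regime in which $\gamma_0=(2\gamma-1)/\beta>0$ is meant to enter, and the crux is to use the extra spatial smoothness recorded in $\|\bS u\|_{\bH^{\sigma+\gamma_0}_2}$ together with the a priori bound $u\in\bH^{\sigma+2}_2$ to absorb this temporal singularity into the $\cH^{\sigma+2}_2(t)$-norm. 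I expect this interplay, rather than the bookkeeping for $u(0)$ and $w$, to be the main obstacle.

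Finally, \eqref{eqn 1.27.2} follows by convolving \eqref{eqn 1.27.1} with $k_{\beta}$: by \eqref{e:1.10} the left side becomes $k_{\beta}*(k_{1-\beta}*\bE\|u\|^2_{H^{\sigma}_2})(t)=\int_0^t\bE\|u(s)\|^2_{H^{\sigma}_2}\,ds=\|u\|^2_{\bH^{\sigma}_2(t)}$, while the right side becomes $\frac{N}{\Gamma(\beta)}\int_0^t(t-s)^{\beta-1}\|u\|^2_{\cH^{\sigma+2}_2(s)}\,ds$, which is \eqref{eqn 1.27.2}.
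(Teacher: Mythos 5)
Your overall route is the paper's: the reduction to $\sigma=0$, the decomposition $u=u(0)+w+v$ with $w=k_{\beta}*\bD u$ and $v$ as in \eqref{e:defv}, the use of Lemma \ref{Banach}(iii) for $w$, and the It\^o-isometry/Fubini/Beta-function treatment of $v$ all match, and your bookkeeping for the $u(0)$- and $w$-pieces and for $\|v\|^2_{\bH^0_2(t)}$ is correct. The genuine gap is the one you flag yourself and then leave open: the pointwise bound $(k_{1-\beta}*\bE\|v\|^2_{L_2})(t)\leq N\|\bS u\|^2_{\bL_2(t,\ell_2)}$ is only obtained when the exponent $1+\beta-2\gamma$ is nonnegative, i.e.\ $\gamma\leq(1+\beta)/2$ (note this covers more than $\gamma\le 1/2$; the truly problematic range is $(1+\beta)/2<\gamma<\beta+1/2$, which is nonempty for every $\beta\in(0,1)$). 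Moreover, the escape route you sketch cannot work for the statement as written: the first inequality of \eqref{eqn 1.27.1} carries only $\|\bS u\|^2_{\bH^{\sigma}_2(t,\ell_2)}$ on its right-hand side, not the $\gamma_0$-weighted norm, and the obstruction is a singularity in the \emph{time} variable, which spatial smoothness of $g$ cannot remove. Indeed, taking $g(s,x)=\phi(s)\psi(x)$ with $\psi\in C^\infty_0$ fixed and $\phi^2$ an approximate identity at $s=t$ makes your quantity $N\int_0^t(t-r)^{1+\beta-2\gamma}\bE\|g(r)\|^2_{L_2(\ell_2)}dr$ blow up while every term on the right of \eqref{eqn 1.27.1} stays bounded, so no completion of this step exists in that range.

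For comparison, the paper disposes of this term in one line,
\begin{equation*}
k_{1-\beta}*\bE\|v\|^2_{L_2}(t)\leq N\int_0^t(t-s)^{-\beta}\int_0^s(s-r)^{2(\beta-\gamma)}\bE\|g(r,\cdot)\|^2_{L_2(\ell_2)}\,dr\,ds\leq N\|g\|^2_{\bL_2(t,\ell_2)},
\end{equation*}
citing only $2(\beta-\gamma)>-1$; your computation of the inner integral as $B(1-\beta,1+2\beta-2\gamma)(t-r)^{1+\beta-2\gamma}$ shows that this justification is sufficient exactly when $1+\beta-2\gamma\geq 0$ and that the step is otherwise not established. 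So you have not missed an idea the paper supplies; you have located a step the paper asserts without proof in part of the admissible range of $\gamma$. The damage is limited: the consequence actually used downstream is the time-integrated estimate \eqref{eqn 1.27.2} (and the convolved inequality \eqref{gronwall18}), and for those one only needs $\int_0^t\bE\|v(s)\|^2_{L_2}\,ds\leq NT^{1+2\beta-2\gamma}\|g\|^2_{\bL_2(t,\ell_2)}$, which you do prove; convolving your decomposition with $k_{\beta}$ directly, rather than passing through the pointwise inequality \eqref{eqn 1.27.1}, therefore recovers \eqref{eqn 1.27.2} in all cases. But as a proof of \eqref{eqn 1.27.1} itself your argument is incomplete, and you should say explicitly that it (and the paper's) covers only $\gamma\leq(1+\beta)/2$.
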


\begin{proof}
We only consider the case $\sigma=0$. In general,  one can consider $\Delta^{\sigma/2}u$ in place of $u$.
Denote $v$ as \eqref{e:defv} in Lemma \ref{Banach}. Then
by (\ref{eqn 8.27.1}),
$$
(k_{1-\beta}*\bE\|u\|^2_{L_2})(t)\leq 2 (k_{1-\beta}*\bE\|u(0)+v\|^2_{L_2})(t)+2\bE\int^t_0(f(s, \cdot),u(s, \cdot)-u(0, \cdot)-v(s, \cdot))_{L_2} ds.
$$
Note that by Fubini's theorem and Davis's inequality
\begin{eqnarray}
                              \nonumber
&&\bE\int^t_0\|v(s)\|^2_{L_2}ds = \int^{t}_0 \bE \|v(s)\|^2_{L_2}ds\\
&\leq& N \bE \int^t_0 \int^s_0 (s-r)^{2(\beta-\gamma)}
\|g(r, \cdot)\|^2_{L_2(\ell_2)} drds  \leq N\|g\|^2_{\bL_2(t,\ell_2)}.   \label{eqn 8.28.5}
\end{eqnarray}
For the last inequality we use the fact $2(\beta-\gamma)>-1$.
Therefore, by young's inequality
$$
\bE\int^t_0(f(s, \cdot),u(s, \cdot)-u(0, \cdot)-v(s, \cdot))_{L_2} ds\leq N(\bE\|u(0)\|^2_{L_2}+\|f\|^2_{\bL_2(t)}+\|g\|^2_{\bL_2(t,\ell_2)}+\|u\|^2_{\bL_2(t)}).
$$
Also,
$$
(k_{1-\beta}*\bE\|u(0)+v\|^2_{L_2})(t)\leq N \bE\|u(0)\|^2_{L_2}+ N k_{1-\beta}*\bE \|v\|^2_{L_2}(t),
$$
$$
k_{1-\beta}*\bE \|v\|^2_{L_2}(t)\leq N \int^t_0 (t-s)^{-\beta}\int^s_0 (s-r)^{2(\beta-\gamma)}\bE\|g(r, \cdot)\|^2_{L_2(\ell_2)} drds\leq N\|g\|^2_{\bL_2(t,\ell_2)}.
$$
This proves \eqref{eqn 1.27.1}.

Note that
the second equality in \eqref{eqn 1.27.1} just follows from the definition
\eqref{e:3.10} of $\|u\|^2_{\cH^{\sigma+2}_2(t)}$.
Hence to prove (\ref{eqn 1.27.2}) it is enough to consider a  convolution with $k_{\beta}$ and use \eqref{e:1.10}, which implies
  $$
  (k_{\beta}*k_{1-\beta}*\bE \|u\|^2_{L_2})(t)=\int^t_0 \bE \|u\|^2_{L_2}ds=\|u\|^2_{\bL_2(t)}.
  $$
  Hence the theorem is now proved.
\end{proof}

 Define
$$
P_{\beta,\gamma}(t,x)=\begin{cases}I^{\beta-\gamma}p (\cdot,x) (t)\quad &\text{if} \quad\beta\geq \gamma\\
\partial^{\gamma-\beta}_t p (\cdot,x) (t) \quad &\text{if}\quad \beta<\gamma.
\end{cases}
$$

\begin{lemma}
                  \label{lemma main estimate}
Let $g\in \bH^{\infty}_0(T,\ell_2)$ and $u$ be defined by
$$
u(t,x)=\sum_{k=1}^{\infty}\int^t_0 \int_{\bR^d}P_{\beta,\gamma}(t-s, x-y)g^k(s,y)dy dW^k_s.
$$
Let $\sigma \leq 2\wedge (\frac{1-2\gamma}{\beta} +2)$ if $\gamma \neq 1/2$, and $\sigma<2$ if $\gamma=1/2$. Then it holds that
$$
\bE \int^T_0 \|\Delta^{\sigma/2}u(t, \cdot)\|^2_{L_2} dt\leq N
 \|g\|^2_{\bL_2(T, \ell_2)}.
$$
In general, for any $\gamma_1 \in \bR$,
$$
\bE \int^T_0 \|u(t, \cdot)\|^2_{H^{\gamma_1+\sigma}_2} dt\leq N
 \|g\|^2_{\bH^{\gamma_1}_2(T, \ell_2)}.
$$
\end{lemma}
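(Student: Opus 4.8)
The plan is to pass to the Fourier transform in the space variable $x$, where the spatial convolution defining $u$ becomes multiplication by an explicit Mittag--Leffler multiplier, and then to reduce the whole inequality to a single deterministic bound on that multiplier.

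First I would compute the symbol $\cF(P_{\beta,\gamma}(t,\cdot))(\xi)$. Since $\cF(p(t,\cdot))(\xi)=E_\beta(-|\xi|^2t^\beta)$ and the time operators $I^{\beta-\gamma}$ and $D^{\gamma-\beta}_t$ act only on the $t$-variable, they commute with $\cF$ in $x$. Thus when $\beta\ge\gamma$, equation \eqref{seattle2} of Lemma \ref{l:DaE} (with $a=\beta-\gamma$, $b=|\xi|^2$) gives $\cF(P_{\beta,\gamma}(t,\cdot))(\xi)=t^{\beta-\gamma}E_{\beta,1+\beta-\gamma}(-|\xi|^2t^\beta)$; and when $\beta<\gamma$, using that the Riemann--Liouville and Caputo derivatives of $p(\cdot,x)$ coincide for $x\neq0$, equation \eqref{seattle1} (with $a=\gamma-\beta$) produces the \emph{same} expression. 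I write $\Phi(\tau,\xi):=\tau^{\beta-\gamma}E_{\beta,1+\beta-\gamma}(-|\xi|^2\tau^\beta)$ for this common symbol.

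Next, taking $\cF$ of $u$ and interchanging it with the stochastic integral (legitimate for $g\in\bH^\infty_0(T,\ell_2)$, where only finitely many $g^k$ are present and each is smooth, via the stochastic Fubini theorem as in Lemma \ref{derivative}), one obtains $\cF(u(t,\cdot))(\xi)=\sum_k\int_0^t\Phi(t-s,\xi)\cF(g^k(s,\cdot))(\xi)\,dW^k_s$ up to the usual $(2\pi)^{d/2}$ factor. By Plancherel's identity, the It\^o isometry together with the independence of the $W^k$, and Fubini's theorem,
\[
\bE\int_0^T\|\Delta^{\sigma/2}u(t,\cdot)\|^2_{L_2}\,dt=\int_{\bR^d}|\xi|^{2\sigma}\,\bE\int_0^T|\cF(g(s,\cdot))(\xi)|^2_{\ell_2}\Big(\int_s^T|\Phi(t-s,\xi)|^2\,dt\Big)\,ds\,d\xi.
\]
Since $\int_s^T|\Phi(t-s,\xi)|^2\,dt\le m(\xi):=\int_0^T|\Phi(\tau,\xi)|^2\,d\tau$, and $\|g\|^2_{\bL_2(T,\ell_2)}=\bE\int_0^T\int_{\bR^d}|\cF(g(s,\cdot))(\xi)|^2_{\ell_2}\,d\xi\,ds$, the claimed estimate reduces to the multiplier bound $\sup_\xi|\xi|^{2\sigma}m(\xi)<\infty$; for the general statement one instead bounds $\sup_\xi(1+|\xi|^2)^\sigma m(\xi)$, whose finiteness near $\xi=0$ is automatic and covers negative $\sigma$.

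The heart of the proof, and the main obstacle, is this multiplier estimate. Writing $r=|\xi|^2$ and substituting $w=r\tau^\beta$ turns $m(\xi)$ into $\beta^{-1}r^{-(1+2\beta-2\gamma)/\beta}\int_0^{rT^\beta}w^{(\beta-2\gamma+1)/\beta}|E_{\beta,1+\beta-\gamma}(-w)|^2\,dw$, so that $|\xi|^{2\sigma}m(\xi)=\beta^{-1}r^{\sigma-(1+2\beta-2\gamma)/\beta}\int_0^{rT^\beta}(\cdots)\,dw$, where $(1+2\beta-2\gamma)/\beta=\frac{1-2\gamma}{\beta}+2$. Using $E_{\beta,1+\beta-\gamma}(0)=1/\Gamma(1+\beta-\gamma)$ and the decay $E_{\beta,1+\beta-\gamma}(-w)\sim w^{-1}/\Gamma(1-\gamma)$ from \eqref{mittag} (applicable since $-1<(1+\beta-\gamma)-\beta=1-\gamma<1$), the integrand behaves like $w^{(\beta-2\gamma+1)/\beta}$ near $0$ and like $w^{(1-2\gamma)/\beta-1}$ near $\infty$; integrability at $0$ is precisely the hypothesis $\gamma<\tfrac12+\beta$ of \eqref{e:1.7}. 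One then reads off the behavior as $r\to\infty$ in three regimes: if $\gamma>\tfrac12$ the $w$-integral converges and $|\xi|^{2\sigma}m(\xi)\sim Cr^{\sigma-(\frac{1-2\gamma}{\beta}+2)}$, forcing $\sigma\le\frac{1-2\gamma}{\beta}+2$; if $\gamma<\tfrac12$ the integral grows like $r^{(1-2\gamma)/\beta}$, giving the net power $r^{\sigma-2}$ and the constraint $\sigma\le2$; and if $\gamma=\tfrac12$ the integral diverges logarithmically, giving $r^{\sigma-2}\log r$ and hence the strict $\sigma<2$. Combined with continuity and finiteness of $m$ on compact $r$-intervals and its bounded behavior as $r\to0$, and with the elementary fact that $\frac{1-2\gamma}{\beta}+2<2$ exactly when $\gamma>\tfrac12$, this yields $\sup_\xi|\xi|^{2\sigma}m(\xi)<\infty$ on precisely the stated range $\sigma\le2\wedge(\frac{1-2\gamma}{\beta}+2)$ (with $\sigma<2$ when $\gamma=\tfrac12$). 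The general $H^{\gamma_1+\sigma}_2$ estimate then follows with no new work: relative to $\|g\|^2_{\bH^{\gamma_1}_2}$ the weight appearing in $\bE\int_0^T\|u\|^2_{H^{\gamma_1+\sigma}_2}\,dt$ is $(1+|\xi|^2)^\sigma$, so everything reduces to $\sup_\xi(1+|\xi|^2)^\sigma m(\xi)<\infty$, delivered by the same computation.
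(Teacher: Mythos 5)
Your proposal is correct and follows essentially the same route as the paper: Fourier transform plus the It\^o isometry reduce the claim to a uniform bound on the Mittag--Leffler multiplier $\tau^{\beta-\gamma}E_{\beta,1+\beta-\gamma}(-|\xi|^2\tau^\beta)$, which is then checked by the same scaling substitution and the same three-case analysis ($\gamma>1/2$, $\gamma=1/2$, $\gamma<1/2$) using the boundedness of $E_{\beta,\gamma}$ on $(-\infty,0]$ and the decay \eqref{mittag}; your exponents and the role of hypothesis \eqref{e:1.7} for integrability near $0$ all match the paper's computation. The only differences are cosmetic (your substitution $w=|\xi|^2\tau^\beta$ versus the paper's $r=|\xi|^{2/\beta}(t-s)$, and a slightly more explicit treatment of the low-frequency region and of the general $\gamma_1$ statement).
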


\begin{proof}
Let $a:=\gamma-\beta<1/2$. Recall that $E_{\alpha,\gamma}(t)$ is bounded on $(-\infty,0]$.
By the Fourier transform and
Lemma \ref{l:DaE}, we have  for any $\sigma\geq 0$
\begin{eqnarray*}
&&\bE\int^T_0\|\Delta^{\sigma/2}u(t, \cdot)\|^2_{L_2} dt\\ &\leq&N \sum_{k=1}^\infty \bE \int^T_0 \int^t_0\int_{\bR^d} |\xi|^{2\sigma}(t-s)^{-2a}E^2_{\beta, 1-a}(-|\xi|^2(t-s)^{\beta})|\hat{g}^k(s, \xi)|^2 d\xi ds\,dt\\
&\leq&N \|g\|^2_{\bL_2(T, \ell_2)}\\
&&+N \bE \int^T_0 \int^T_s\int_{|\xi|\geq 1} |\xi|^{2\sigma}(t-s)^{-2a}E^2_{\beta, 1-a}(-|\xi|^2(t-s)^{\beta})
|\hat{g}(s, \xi)|^2_{\ell_2} d\xi \,dt\,ds.
\end{eqnarray*}
By the substitution $r=|\xi|^{2/\beta} (t-s)$, the last term above is bounded by constant times of
$$
\bE \int_{|\xi|\geq 1}\int^T_0 |\hat{g}(s,\xi)|_{\ell_2}^2\int^{T|\xi|^{\frac{2}{\beta}}}_0 |\xi|^{2(\sigma+(2a-1)/{\beta})} r^{-2a}E^2_{\beta, 1-a}(-r^{\beta}) dr ds\,d\xi.
$$
Let $\gamma>1/2$.  Then,
since  $E_{\beta, 1-a}(-r^{\beta})$ is bounded on $[0,\infty)$ and $E_{\beta, 1-a}(-r^{\beta}) \le N r^{-\beta}$ if $r \geq 1$ (see (\ref{mittag})), we have
\begin{eqnarray*}
&&\int^{T|\xi|^{\frac{2}{\beta}}}_0 |\xi|^{2(\sigma+(2a-1)/{\beta})} r^{-2a}E^2_{\beta, 1-a}(-r^{\beta}) dr\\
&\leq& \int^{\infty}_0 r^{-2a} E^2_{\beta, 1-a}(-r^{\beta}) dr\\
&\le & N \left(\int_0^1 r^{-2a} dr+ \int_1^\infty r^{-2 \gamma} dr \right)<\infty.
\end{eqnarray*}
If $\gamma=1/2$, then  since $|\xi|\geq 1$ and $\sigma<2$,
\begin{eqnarray*}
&&\int^{T|\xi|^{\frac{2}{\beta}}}_0|\xi|^{2(\sigma+(2a-1)/{\beta})} r^{-2a}E^2_{\beta, 1-a}(-r^{\beta})  dr\\
&=&\int^{T|\xi|^{\frac{2}{\beta}}}_0|\xi|^{-2(2-\sigma)} r^{-2a}E^2_{\beta, 1-a}(-r^{\beta})  dr\\
&\leq& \int^1_0r^{-2a} E^2_{\beta, 1-a}(-r^{\beta})dr+N
|\xi|^{-2(2-\sigma)}  \int^{T|\xi|^{\frac{2}{\beta}}}_1 r^{-1} dr \\
 &\leq&N \int_0^1 r^{-2a} dr+N|\xi|^{-2(2-\sigma)} \ln|\xi|\leq N<\infty.
\end{eqnarray*}
The case $\gamma<1/2$ is treated similarly using $\sigma\leq 2$. Indeed,
\begin{eqnarray*}
&&\int^{T|\xi|^{\frac{2}{\beta}}}_0|\xi|^{2(\sigma+(2a-1)/{\beta})} r^{-2a}E^2_{\beta, 1-a}(-r^{\beta})  dr\\
&\leq& \int^1_0 r^{-2a}E^2_{\beta, 1-a}(-r^{\beta})dr+N
|\xi|^{2(\sigma+(2a-1)/{\beta})}  \int^{T|\xi|^{\frac{2}{\beta}}}_1 r^{-2\beta} dr\\
 &\leq&N\int_0^1 r^{-2a} dr+N |\xi|^{2\sigma-4}\leq N<\infty.
\end{eqnarray*}
Therefore the lemma is proved.
\end{proof}

Lemma \ref{lemma main estimate} says that $u$ (which is a solution of (\ref{time-space}) below) is smoother than $g$ by order $2\wedge ((1-2\gamma)\beta^{-1}+2)$ if $\gamma\neq 1/2$ and $2-\varepsilon$ if $\gamma=1/2$, where $\varepsilon>0$. Thus, for example, to estimate the second derivative of solution $u$ we need to assume
$$
\|g\|_{\bH^{\gamma_0}_2(T,\ell_2)}<\infty \quad \text{if}\,\, \gamma\neq 1/2,  \quad \text{and}\quad  \|g\|_{\bH^{\varepsilon}_2(T,\ell_2)}<\infty \quad \text{if}\,\, \gamma=1/2.
$$
Recall $\gamma_0=(2\gamma-1)_+/{\beta}<2$, which is defined in (\ref{gamma0}).

\vspace{3mm}

We first consider the equation
\begin{equation}
                    \label{time-space}
    \partial^{\beta}_tu (t,x)=\Delta u(t,x)+f(t,x)
    +\sum_{k=1}^{\infty} \partial^{\gamma}_t \int^t_0 g^k(s,x)\, dW^k_s.
    \end{equation}
Note that by letting $\beta\to 1$ and $\gamma \to 1$ we get the classical stochastic partial differentia equations.

\begin{lemma}
              \label{lemma formula}
 Let   $f\in \bL_2(T)$, $g\in \bH^{\infty}_0(T,\ell_2)$ and $u\in \bH^2_2(T)$. Then $u$ satisfies (\ref{time-space}) with  initial data $u_0\in U^1_2$ in the sense distributions (see Definition \ref{def 3.23}) if and only if
\begin{eqnarray}
u(t,x)&=&\int_{\bR^d}p(t,x-y)u_0(y)dy+\int^t_0\int_{\bR^d} q(t-s,x-y)f(s,y)dyds    \nonumber\\
&+&\sum_{k=1}^\infty \int^t_0 \int_{\bR^d}P_{\beta,\gamma}(t-s,x-y)g^k(s,y)dydW^k_s.     \label{right term}
\end{eqnarray}
\end{lemma}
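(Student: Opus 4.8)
The plan is to verify the representation formula \eqref{right term} by checking that the right-hand side satisfies the distributional equation \eqref{sense of solution}, and then to invoke uniqueness to conclude the converse direction. Since both the equation and the candidate solution are linear in the data $(u_0, f, g)$, I would treat the three terms separately: the deterministic initial-value term $\int p(t,x-y)u_0(y)\,dy$, the deterministic forcing term $\int_0^t\int q(t-s,x-y)f(s,y)\,dy\,ds$, and the stochastic term involving $P_{\beta,\gamma}$. For the first two I can work pathwise (fixing $\omega$), and reduce matters to the fundamental-solution properties of $p$ and $q$ collected in Lemma \ref{lemma list} and Lemma \ref{L:2.3}.

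First I would treat the deterministic homogeneous part. Using $\cF(p(t,\cdot))(\xi)=E_\beta(-|\xi|^2 t^\beta)$ together with \eqref{frac heat}, the function $P_0(t,x):=\int p(t,x-y)u_0(y)\,dy$ solves $\partial^\beta_t P_0 = \Delta P_0$ with $P_0(0)=u_0$; on the Fourier side this is exactly the statement that $E_\beta(-|\xi|^2 t^\beta)$ solves $\partial^\beta_t\varphi = -|\xi|^2\varphi$ with $\varphi(0)=1$. For the forcing term, the key identity is Lemma \ref{L:2.3}, namely $\Delta q = \tfrac{\partial}{\partial t}p$, which lets me compute $\Delta \int_0^t\int q(t-s,\cdot-y)f(s,y)\,dy\,ds$ and match it against $\partial^\beta_t$ of the same expression. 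Here I would use $q = D^{1-\beta}_t p$ and the semigroup/convolution relations \eqref{e:DI}, \eqref{e:Dbetadt} for the Riemann--Liouville operators, reducing the verification to the scalar identities in Lemma \ref{l:DaE}. The vanishing boundary behavior of $p,q$ and their derivatives as $t\downarrow 0$, recorded in \eqref{eqn 9.21.1} and Lemma \ref{lemma list}(v), is what guarantees no spurious boundary contributions appear when differentiating the convolutions.

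The main work is the stochastic term. Writing $v(t,x)=\sum_k\int_0^t\int P_{\beta,\gamma}(t-s,x-y)g^k(s,y)\,dy\,dW^k_s$, I must show it contributes $\sum_k\partial^\gamma_t\int_0^t g^k\,dW^k_s$ to the equation, i.e. that in the integrated form \eqref{sense of solution} the stochastic term reproduces $\sum_k(I^{1-\gamma}\int_0^\cdot (g^k,\phi)\,dW^k_s)(t)$. Since $g\in\bH^\infty_0(T,\ell_2)$, the stochastic Fubini theorem (as in Lemma \ref{derivative}) applies, and I can interchange the $dy\,dW^k_s$ integrations with the fractional operators $I^{1-\beta}$, $I^{\beta-\gamma}$ appearing in $P_{\beta,\gamma}$. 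The definition of $P_{\beta,\gamma}$ as $I^{\beta-\gamma}p$ (or $\partial^{\gamma-\beta}_t p$ when $\beta<\gamma$) is designed precisely so that applying $I^{1-\beta}$ and using $k_\beta*k_{1-\beta}\equiv 1$ from \eqref{e:1.10} together with Lemma \ref{L:2.3} collapses the spatial convolution against $P_{\beta,\gamma}$ into a convolution against $k_{1-\gamma}$ times $g$. I expect the genuine obstacle to be bookkeeping the two cases $\beta\ge\gamma$ and $\beta<\gamma$ while justifying every exchange of the (possibly singular) fractional integrals with the stochastic integral; the integrability statements in Lemma \ref{lemma list}(iii),(iv) and Lemma \ref{derivative} are exactly the tools needed to license these interchanges, and the constraint \eqref{e:1.7} is what keeps the relevant kernel exponents integrable.

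Finally, for the converse (``only if'') direction I would argue by uniqueness: given any $u\in\bH^2_2(T)$ satisfying \eqref{time-space} distributionally, the difference between $u$ and the right-hand side of \eqref{right term} solves the homogeneous equation with zero data, so by the a priori estimate \eqref{eqn 1.27.2} of Proposition \ref{lemm 8.28.6} (applied with $\sigma=0$ and $f=\bD w$, $g=\bS w$ both zero) it must vanish in $\bL_2(T)$, hence equals the stated formula. This closes the equivalence.
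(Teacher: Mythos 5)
Your overall skeleton (linearity decomposition, the identity $\Delta q=\frac{\partial}{\partial t}p$ from Lemma \ref{L:2.3}, the fractional-calculus identities, stochastic Fubini, and the case split $\beta\gtrless\gamma$) matches the paper's; you merely run it in the opposite direction, verifying that the right-hand side of \eqref{right term} solves \eqref{time-space} and then appealing to uniqueness, whereas the paper starts from a solution $u$, subtracts $w=D^{\gamma-\beta}_t v$ (resp.\ $I^{\beta-\gamma}v$) so that $u-w$ solves a \emph{deterministic} fractional diffusion equation with forcing $\Delta w$, and then invokes the known deterministic representation \eqref{eqn deterministic} (whose validity for general $f\in\bL_2(T)$ is itself obtained by approximation from \cite{EIK} plus the estimates of \cite{Za}, not by direct verification as you propose for your forcing term). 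Either ordering is legitimate, and both ultimately rest on uniqueness for the homogeneous deterministic problem.

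The genuine gap is in your uniqueness step. You propose to kill the difference $w=u-(\text{RHS of \eqref{right term}})$, which solves $\partial^\beta_t w=\Delta w$ with $w(0)=0$, by applying \eqref{eqn 1.27.2} ``with $f=\bD w$, $g=\bS w$ both zero.'' But for this homogeneous equation $\bD w=\Delta w\neq 0$: in Definition \ref{def 3.23} the operator $\bD$ extracts the \emph{entire} drift, so the norm $\|w\|_{\cH^{2}_2(s)}$ appearing on the right of \eqref{eqn 1.27.2} contains $\|w\|_{\bH^{2}_2(s)}+\|\Delta w\|_{\bL_2(s)}$ by \eqref{e:3.10}. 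The resulting inequality bounds $\|w\|^2_{\bL_2(t)}$ by $\int_0^t(t-s)^{\beta-1}\|w\|^2_{\bH^2_2(s)}\,ds$, i.e.\ a weaker norm by a strictly stronger one, and Gronwall's lemma cannot close this loop; the same obstruction appears if you try \eqref{eqn 1.27.1} instead. To repair it, either cite the uniqueness of the deterministic problem from \cite{Za} directly (this is what the paper does, pathwise in $\omega$), or use the energy inequality \eqref{eqn 8.27.1} of Lemma \ref{Banach}(iii): with $v=0$, $u(0)=0$ and $f=\Delta w$ it gives
\begin{equation*}
\bigl(k_{1-\beta}*\|w\|^2_{L_2}\bigr)(t)\leq 2\int_0^t(\Delta w,w)_{L_2}\,ds=-2\int_0^t\|\nabla w\|^2_{L_2}\,ds\leq 0,
\end{equation*}
whence $w=0$. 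With that substitution your argument closes; as written, the cited estimate does not deliver the conclusion.
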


\begin{proof}
Suppose $u$ satisfies (\ref{time-space}). Recall that for the solution of the (deterministic) equation
$$
\partial^{\beta}_t\bar{u}=\Delta \bar{u}+f, \quad u(0)=u_0
$$
is given by the formula
\begin{equation}
                         \label{eqn deterministic}
\bar{u}(t,x):=\int_{\bR^d}p(t,x-y)u_0(y)dy+ \int^t_0\int_{\bR^d} q(t-s,x-y)f(s,y)dyds.
\end{equation}
  In fact, in \cite[Section 5.2]{EIK} the representation (\ref{eqn deterministic}) is proved for sufficiently smooth $f$. This and the estimate of the solution obtained in \cite[Theorem 3.1]{Za} allow us to use an approximation (see the proof of Theorem \ref{thm laplace}) and get (\ref{eqn deterministic}) for general $f\in \bL_2(T)$.

Thus by considering $u-\bar{u}$, where $\bar{u}$ is defined above,
we may assume without loss of generality that $u_0=0$ and $f=0$.
Suppose first $\beta \leq \gamma$. Set  $a=\gamma-\beta$,
$$
v(t,x):=\sum_{k=1}^\infty\int^t_0 g^k(s,y)dW^k_s,
$$
and
$$
w(t,x):=D^{a}_tv(t,x)=\frac{1}{\Gamma(1-a)}\sum_{k=1}^\infty\int^t_0(t-s)^{-a}g^k(s,x)dW^k_s.
$$
Then $u-w$ satisfies the following fractional  diffusion equation
$$
\partial^{\beta}_t(u-w)=\Delta u=\Delta (u-w)+ \Delta w, \quad \quad (u-v)(0)=0.
$$
Thus by (\ref{eqn deterministic}) with $\Delta w$ in place of $f$, we have
$$
u(t,x)=
w(t, x)
+\int^t_0\int_{\bR^d}q(t-s,x-y)\Delta w(s,y)dyds.
$$
Nota that for any $s<t$
\begin{eqnarray*}
\int_{\bR^d}q(t-s,x-y)\Delta w(s,y)dy
&=&\int_{\bR^d} \Delta q(t-s,x-y) D_t^a v
(s,y) dy\\&=&\int_{\bR^d}\frac{\partial}{\partial t}p(t-s,x-y)D^a_tv(s,y)dy\\
&=&\frac{\partial}{\partial t}\int_{\bR^d}p(t-s,x-y)D^a_tv(s,y)dy.
\end{eqnarray*}
Indeed, the first equality is from Lemma \ref{lemma list}(iv) and the integration by parts, the second equality is from Lemma \ref{L:2.3} and the third equality is from Lemma \ref{lemma list}(v). Therefore $u(t,x)$ is equal to
\begin{eqnarray*}
&&D_t^a v(t,x)+ \int^t_0\frac{\partial}{\partial t}\int_{\bR^d}
p(t-s,x-y) D_t^a v (s,y) dyds\\
&=&\frac{\partial}{\partial t}\int^t_0\int_{\bR^d} p(t-s,x-y) D_t^a v
(s,y) dyds\\
&=&\frac{1}{\Gamma(1-a)}\sum_{k=1}^\infty\frac{\partial}{\partial t}\int^{t}_0\int_{\bR^d} p(t-s,x-y)
\int^s_0(s-r)^{-a}g^k(r,y)dW^k_r dyds\\
&=&\frac{1}{\Gamma(1-a)}\sum_{k=1}^\infty\frac{\partial}{\partial t}\int^{t}_0\int_{\bR^d} p(s,x-y)
\int^{t-s}_0(t-s-r)^{-a}g^k(r,y)dW^k_r dyds.
\end{eqnarray*}

Hence it is enough to prove
\begin{eqnarray}
          \nonumber
&&\frac{1}{\Gamma(1-a)}\sum_{k=1}^\infty \int^{t}_0\int_{\bR^d} p(s,x-y) \int^{t-s}_0(t-s-r)^{-a}g^k(r,y)dW^k_r
dyds\\
& =&\sum_{k=1}^\infty\int^t_0 \int^s_0 \int_{\bR^d} D_t^ap (s-r,x-y)g^k(r,y)dy dW^k_r ds.
\label{imp}
\end{eqnarray}
The latter equals
\begin{eqnarray}
&&\sum_{k=1}^\infty\int^t_0 \int_{\bR^d}\int^{t-r}_0  D_t^ap (s,x-y) ds g^k(r,y)dy dW^k_r \nonumber\\
&=&\sum_{k=1}^\infty\int^t_0 \int_{\bR^d}I^{1-a}p (t-r,x-y) g^k(r,y)dy dW^k_r \label{new 10}\\
&=&\frac{1}{\Gamma(1-a)}\sum_{k=1}^\infty\int^t_0\int_{\bR^d}\int^{t-r}_0(t-r-s)^{-a}p(s,x-y)ds g^k(r,y) dy dW^k_r.
\nonumber
\end{eqnarray}
For (\ref{new 10})  above we used the fact that  $\int^t_0 D_t^a p ds=I^{1-a}p$.
We thus get
\eqref{imp} using the stochastic Fubini's theorem (see \cite{Kr11}).

We now consider the case
$\beta\geq \gamma$. Put $a=\beta-\gamma$ and define
$$
v(t,x)=\int^t_0 g^k(s,x)dW^k_s, \quad w(t,x)=I^{a}v(t,x)=\frac{1}{a\Gamma(a)}\int^{t}_0
(t-s)^a g^k(s,x)dW^k_s.
$$
From this point on it is enough to repeat the case $\beta\leq \gamma$.  Indeed,
following the previous steps, we get
$$
u(t,x)=\frac{1}{a\Gamma(a)}\sum_{k=1}^\infty\frac{\partial}{\partial t}\int^{t}_0\int_{\bR^d} p(s,x-y)
\int^{t-s}_0(t-s-r)^{a}g^k(r,y)dW^k_r dyds.
$$
Note that
\begin{eqnarray*}
&&\sum_{k=1}^\infty\int^t_0 \int^s_0 \int_{\bR^d} I^ap (s-r,x-y)g^k(r,y)dy dW^k_r ds\\
&=&\sum_{k=1}^\infty\int^t_0 \int^{t-r}_0 \int_{\bR^d} I^ap (s,x-y)g^k(r,y)dy dsdW^k_r\\
&=&\frac{1}{a\Gamma(a)}\sum_{k=1}^\infty\int^t_0 \int^{t-r}_0 \int_{\bR^d} (t-s-r)^ap (r,x-y)g^k(r,y)dy
dsdW^k_r.
\end{eqnarray*}
This clearly proves the case $\gamma\geq \beta$.

 On the other hand, going backward of the above equalities one easily finds that if $u$ is given as in (\ref{right term}), then it satisfies (\ref{time-space}).  The proof of the lemma is now complete.
 \end{proof}

Recall $\gamma_0=(2\gamma-1)_+/{\beta}<2$. Fix $\varepsilon_0\in (0,1)$ and define
\begin{equation}\label{e:3.23}
\sigma_0:=\gamma_0+ (\varepsilon_0 1_{\gamma=1/2})=
\begin{cases}
 (2\gamma-1) /\beta \quad &\hbox{if } \gamma >1/2\\
 \varepsilon_0 &\hbox{if } \gamma =1/2\\
 0 &\hbox{if } \gamma <1/2.
 \end{cases}
 \end{equation}

\begin{theorem}
             \label{thm laplace}
   For any $f\in \bH^{\sigma}_2(T)$,  $g\in \bH^{\sigma+\sigma_0}_2(T,\ell_2)$ and $u_0\in U^{\sigma+1}_2$, equation (\ref{time-space}) has a unique solution $u\in \cH^{\sigma+2}_2(T)$, and for this solution we have
\begin{equation}
                    \label{main estimate1}
\|u\|_{\bH^{\sigma+2}_2(T)}\leq N\left(\|u_0\|_{U^{\sigma+1}_2}+\|f\|_{\bH^{\sigma}_2(T)}+\|g\|_{\bH^{\sigma+\sigma_0}_2(T,\ell_2)}\right),
\end{equation}
where $N$ depends only on $d$ and $T$.
\end{theorem}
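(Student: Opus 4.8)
The plan is to exploit linearity to split the problem into a deterministic part carrying the data $(u_0,f)$ and a stochastic part carrying $g$, and then to assemble the theorem from three ingredients already in hand: the representation formula of Lemma \ref{lemma formula}, the sharp smoothing estimate of Lemma \ref{lemma main estimate}, and the energy inequality \eqref{eqn 8.27.1} of Lemma \ref{Banach}(iii). By the representation \eqref{eqn deterministic} together with the deterministic maximal $L_2$-regularity theory for $\partial^\beta_t\bar u=\Delta\bar u+f$, $\bar u(0)=u_0$ (from \cite{EIK} and \cite[Theorem 3.1]{Za}), one obtains a solution $\bar u$ with $\|\bar u\|_{\bH^{\sigma+2}_2(T)}\le N(\|u_0\|_{U^{\sigma+1}_2}+\|f\|_{\bH^{\sigma}_2(T)})$. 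Replacing $u$ by $u-\bar u$, I may and do assume $u_0=0$ and $f=0$, leaving only the stochastic forcing.

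First I would treat $g\in\bH^{\infty}_0(T,\ell_2)$. Define $u$ by the stochastic convolution
\[
u(t,x)=\sum_{k=1}^\infty\int_0^t\int_{\bR^d}P_{\beta,\gamma}(t-s,x-y)g^k(s,y)\,dy\,dW^k_s,
\]
which by Lemma \ref{lemma formula} solves \eqref{time-space} with $u_0=0$, $f=0$ in the sense of Definition \ref{def 3.23}, so that $\bD u=\Delta u$ and $\bS u=g$. The core estimate is Lemma \ref{lemma main estimate}: applied with the maximal smoothing order $2-\sigma_0$ (which equals $2\wedge((1-2\gamma)/\beta+2)$ when $\gamma\neq 1/2$, and $2-\varepsilon_0$ when $\gamma=1/2$) and shift $\gamma_1=\sigma+\sigma_0$, and since $(\sigma+\sigma_0)+(2-\sigma_0)=\sigma+2$, it yields $\|u\|_{\bH^{\sigma+2}_2(T)}\le N\|g\|_{\bH^{\sigma+\sigma_0}_2(T,\ell_2)}$. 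The remaining terms of the $\cH^{\sigma+2}_2(T)$-norm in \eqref{e:3.10} come straight from the equation: $\|\bD u\|_{\bH^{\sigma}_2(T)}=\|\Delta u\|_{\bH^{\sigma}_2(T)}\le\|u\|_{\bH^{\sigma+2}_2(T)}$, while $\|\bS u\|_{\bH^{\sigma+\gamma_0}_2(T,\ell_2)}=\|g\|_{\bH^{\sigma+\gamma_0}_2(T,\ell_2)}\le\|g\|_{\bH^{\sigma+\sigma_0}_2(T,\ell_2)}$ since $\gamma_0\le\sigma_0$. Hence $\|u\|_{\cH^{\sigma+2}_2(T)}\le N\|g\|_{\bH^{\sigma+\sigma_0}_2(T,\ell_2)}$, which is \eqref{main estimate1} in this subcase.

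For general $g\in\bH^{\sigma+\sigma_0}_2(T,\ell_2)$ I would use density of $\bH^{\infty}_0(T,\ell_2)$ to pick $g_n\to g$ with $g_n\in\bH^{\infty}_0(T,\ell_2)$ and let $u_n$ be the corresponding solutions. By linearity $u_n-u_m$ solves the equation with forcing $g_n-g_m$, so the estimate just proved gives $\|u_n-u_m\|_{\cH^{\sigma+2}_2(T)}\le N\|g_n-g_m\|_{\bH^{\sigma+\sigma_0}_2(T,\ell_2)}\to 0$. Since $\cH^{\sigma+2}_2(T)$ is complete (Lemma \ref{Banach}(i)), $u_n\to u$ there, and $u$ obeys \eqref{main estimate1}. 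That $u$ actually solves \eqref{time-space} follows by passing to the limit in the distributional identity \eqref{sense of solution}, exactly as in the completeness argument of Lemma \ref{Banach}(i): the deterministic term converges because $\Delta u_n\to\Delta u$ in $\bH^{\sigma}_2(T)$, and the stochastic term converges in probability uniformly on $[0,T]$ by Remark \ref{remark 1} and Lemma \ref{lemma 2}(ii). Adding back $\bar u$ gives existence with the full estimate.

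It remains to prove uniqueness, which by linearity reduces to showing that a solution $w\in\cH^{\sigma+2}_2(T)$ with $u_0=0$, $f=0$, $g=0$ vanishes. Then $\bD w=\Delta w$ and $\bS w=0$, so the stochastic convolution $v$ in \eqref{e:defv} is zero, and applying \eqref{eqn 8.27.1} to $(1-\Delta)^{\sigma/2}w\in\cH^2_2(T)$ (which solves the same type of equation, as $(1-\Delta)^{\sigma/2}$ commutes with $\Delta$) gives
\[
\big(k_{1-\beta}*\|(1-\Delta)^{\sigma/2}w\|^2_{L_2}\big)(t)\le 2\int_0^t\big(\Delta(1-\Delta)^{\sigma/2}w,(1-\Delta)^{\sigma/2}w\big)_{L_2}\,ds\le 0,
\]
the last inequality by integration by parts in $x$. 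Since $k_{1-\beta}$ is a strictly positive kernel and the integrand is nonnegative, this forces $w=0$ a.e.\ in $\bH^{\sigma}_2(T)$, hence in $\cH^{\sigma+2}_2(T)$. The genuinely hard analytic input---the sharp gain of $2-\sigma_0$ derivatives for the stochastic convolution and the threshold role of \eqref{e:1.7}---is already isolated in Lemma \ref{lemma main estimate}; at the level of the theorem the main points to watch are the bookkeeping of the regularity indices when invoking that lemma (ensuring $\gamma_1+(2-\sigma_0)=\sigma+2$ for every $\sigma\in\bR$ across all three regimes of $\gamma$) and the verification that the limit $u$ produced by the approximation is a genuine distributional solution.
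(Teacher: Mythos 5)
Your proposal is correct and follows essentially the same route as the paper: reduce to $u_0=0$, $f=0$ via the deterministic theory of \cite{EIK} and \cite{Za}, handle $g\in\bH^{\infty}_0(T,\ell_2)$ by combining the representation of Lemma \ref{lemma formula} with the smoothing estimate of Lemma \ref{lemma main estimate} (your index bookkeeping $\gamma_1+(2-\sigma_0)=\sigma+2$ is exactly what the paper uses implicitly), and then pass to general $g$ by density and completeness of $\cH^{\sigma+2}_2(T)$. The only departure is that you prove uniqueness directly from the energy inequality \eqref{eqn 8.27.1} applied to $(1-\Delta)^{\sigma/2}w$, whereas the paper simply cites the deterministic uniqueness result of \cite{Za}; both are valid, and yours is marginally more self-contained.
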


\begin{proof}
Without loss of generality we only need to prove  the case $\sigma=0$.

For the deterministic equation, the theorem including the estimate  is proved in \cite{Za}. Thus the uniqueness for equation (\ref{time-space}) easily follows.

Define $w$ as in (\ref{eqn deterministic}). Then by considering $u-w$, we may assume without loss of generality that $u_0=0$ and $f=0$.

First, assume $g\in \bH^{\infty}_0(T,\ell_2)$. Then by Lemmas \ref{lemma main estimate} and \ref{lemma formula}, equation (\ref{time-space})
 has a unique solution $u\in \bH^2_2(T)$ and estimate \eqref{main estimate1} holds.

 For general $g\in  \bH^{\sigma_0}_2(T,\ell_2)$, take a sequence of $g_n\in \bH^{\infty}_0(T,\ell_2)$ so that $g_n \to g$ in  $\bH^{\sigma_0}_2(T,\ell_2)$. Define $u_n$ as the solution of equation (\ref{time-space}) with $g_n$ in place of $g$, that is,
 \begin{equation}
                            \label{approx}
  (I^{1-\beta}u_n)(t)=\int^t_0\Delta u_n (s)dt+\sum_{k=1}^{\infty} (I^{1-\gamma} \int^{\cdot}_0 g^k_n(s)\, dW^k_s)(t).                       \end{equation}
 Then by
 Lemmas \ref{lemma main estimate} and \ref{lemma formula}
 \begin{equation}
                   \label{eqn 7.03.2}
  \|u_n\|_{\bH^2_2(T)}\leq N \|g_n\|_{\bH^{\sigma_0}_2(T,\ell_2)},
  \end{equation}
  $$
  \|u_n-u_m\|_{\bH^2_2(T)}\leq N \|g_n-g_m\|_{\bH^{\sigma_0}_2(T,\ell_2)}.
  $$
  Thus $u_n\to u$ in $\bH^2_2(T)$ for some $u$.
  Letting $n\to \infty$ in (\ref{approx}) and using Remark \ref{remark 1}, we see that $u$ is a solution of  (\ref{time-space}).   Also we easily  get (\ref{main estimate1}) from  (\ref{eqn 7.03.2}). The
  theorem is proved.
\end{proof}

The following lemma is taken from \cite[Corllary 2]{YGD}.

\begin{lemma}(Gronwall's lemma)
                              \label{gronwall}
      Suppose $b>0$ and $a(t)$ is a nonnegative nondecreasing locally integrable  function on $[0,T)$, and  suppose $\eta(t)$ in nonnegative locally integrable on $[0,T)$ with
      $$
      \eta(t)\leq a(t)+b\int^t_0 (t-s)^{\beta-1}\eta(s)ds, \quad \forall t<T.
      $$
Then it holds that
$$
\eta(t)\leq a(t)E_{\beta}(b \Gamma(\beta)t^{\beta}).
$$
\end{lemma}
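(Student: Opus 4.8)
The plan is to prove this fractional Gronwall inequality by iterating the hypothesis, in the spirit of a Picard iteration. Writing $(t-s)^{\beta-1}=\Gamma(\beta)\,k_\beta(t-s)$, I introduce the linear operator
$$
B\phi:=b\,\Gamma(\beta)\,(k_\beta*\phi),
$$
so that the assumption reads $\eta\le a+B\eta$ on $[0,T)$. Since the kernel $k_\beta$ is nonnegative and $b>0$, the operator $B$ maps nonnegative functions to nonnegative functions and is order-preserving; by linearity, applying $B$ to $\eta\le a+B\eta$ gives $B\eta\le Ba+B^2\eta$, and hence $\eta\le a+Ba+B^2\eta$. Iterating this, I obtain by induction
$$
\eta(t)\le \sum_{k=0}^{n-1}B^k a(t)+B^n\eta(t),\qquad t<T,\ n\in\bN .
$$

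First I would identify the iterated kernels. Applying $k_\beta*k_\gamma=k_{\beta+\gamma}$ repeatedly yields $B^k\phi=(b\Gamma(\beta))^k\,(k_{k\beta}*\phi)$. Because $a$ is nondecreasing and $k_{k\beta}\ge0$, and $\int_0^t k_{k\beta}(t-s)\,ds=t^{k\beta}/\Gamma(k\beta+1)$, it follows that
$$
B^k a(t)\le (b\Gamma(\beta))^k\,a(t)\int_0^t k_{k\beta}(t-s)\,ds
=a(t)\,\frac{\big(b\Gamma(\beta)t^\beta\big)^k}{\Gamma(k\beta+1)} .
$$
Summing over $k$ and recognizing the Mittag-Leffler series, I get
$$
\sum_{k=0}^{\infty}B^k a(t)\le a(t)\sum_{k=0}^{\infty}\frac{\big(b\Gamma(\beta)t^\beta\big)^k}{\Gamma(k\beta+1)}
=a(t)\,E_\beta\big(b\Gamma(\beta)t^\beta\big),
$$
which is exactly the desired right-hand side.

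The remaining—and only delicate—step is to show that the remainder $B^n\eta(t)$ vanishes as $n\to\infty$ for each fixed $t<T$. Fix $T'\in(t,T)$; since $\eta$ is locally integrable, $\eta\in L_1([0,T'])$, and once $n\beta\ge1$ one has $(t-s)^{n\beta-1}\le (T')^{n\beta-1}$ for $s\in[0,t]$, so that
$$
0\le B^n\eta(t)=\frac{(b\Gamma(\beta))^n}{\Gamma(n\beta)}\int_0^t (t-s)^{n\beta-1}\eta(s)\,ds
\le \frac{\big(b\Gamma(\beta)(T')^\beta\big)^n}{(T')\,\Gamma(n\beta)}\,\|\eta\|_{L_1([0,T'])} .
$$
Since the generalized Mittag-Leffler function is entire, $E_{\beta,0}(c)=\sum_{k\ge1}c^k/\Gamma(k\beta)$ converges for every $c\ge0$, so its general term $c^n/\Gamma(n\beta)\to0$; hence $B^n\eta(t)\to0$. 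Letting $n\to\infty$ in the iterated inequality then yields $\eta(t)\le a(t)E_\beta(b\Gamma(\beta)t^\beta)$. I expect the main obstacle to be precisely this remainder estimate: one must control the Gamma-function growth carefully to guarantee $B^n\eta\to0$ on compact subintervals, and it is here that the local integrability of $\eta$ together with $\beta>0$ is essential.
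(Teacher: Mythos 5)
Your proof is correct. The paper itself gives no proof of this lemma, simply citing \cite[Corollary 2]{YGD}, and your iteration argument --- applying the monotone operator $B\phi=b\Gamma(\beta)(k_\beta*\phi)$ repeatedly, bounding $B^k a$ via the semigroup property $k_\beta*k_{k\beta}=k_{(k+1)\beta}$ and the monotonicity of $a$, and killing the remainder $B^n\eta$ by the super-geometric growth of $\Gamma(n\beta)$ --- is essentially the proof given in that reference.
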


\section{\bf SPDE of divergence form type}

In this section,
we  study the equation of divergence type
 \begin{eqnarray}
                 \nonumber
  \partial^{\beta}_t u&=&D_i\left[
a^{ij}u_{x^j}+b^iu+f^i(u)\right]+cu+h(u)
\\
    &&+ \sum_{k=1}^\infty\partial^{\gamma}_t \int^t_0 (\sigma^{ijk}u_{x^ix^j}+\mu^{ik} u_{x^i}
    +\nu^ku +g^k(u))\, dW^k_s
    \label{eqn 7.04.5}
 \end{eqnarray}
where  the coefficients $a^{ij}, b^i, c, \sigma^{ijk}, \mu^{ik}, \nu^k$  are functions depending on
$(\omega,t,x)$ and the functions $f^i, h, g^k$ depend on $(\omega,t,x)$ and the unknown $u$.

For  a $\ell_2$-valued  continuous function $v$ in $\bR^d$, we define the space $C^{\alpha}(\ell_2)$, $\alpha\in [0,1]$ by the norm
$$
|v|_{C^{\alpha}(\ell_2)}=\sup_x |v|_{\ell_2}+ \sup_{x\neq y}\frac{|v(x)-v(y)|_{\ell_2}}{|x-y|^{\alpha}}.
$$
\vspace{3mm}
\begin{assumption}
               \label{ass 1}
(i) The coefficients $a^{ij}, b^i,  c, \sigma^{ijk}, \mu^{ik},
\nu^{ik}$ are
$ \cP \otimes \cB([0, T] \times \bR^d)$-measurable.

(ii) There exist constants $\delta, K_1>0$ so that for any $\xi\in
\bR^d$
$$
\delta |\xi|^2 \leq a^{ij}\xi_i\xi_j \leq K_1|\xi|^2 \quad \forall i,j,\omega,t,x.
$$
\begin{align}\label{e:aii}
|b^i|+|c|+|\sigma^{ij}|_{\ell_2}+|\mu^i|_{\ell_2}+|\nu|_{\ell_2}
\leq K_1  \quad \forall i,j,\omega,t,x.
\end{align}

(iii) $\sigma^{ijk}=0$ if $\gamma\geq 1/2$, and $\mu^{ik}=0$ if
$\gamma \geq 1/2+ \beta/2$ for every  $i,j, k, \omega,t,x$.
\end{assumption}

\medskip

Recall that $\sigma_0$ is the constant defined in \eqref{e:3.23}.

\medskip

\begin{assumption}
                      \label{ass 2}
(i) There exist constant $\kappa, K_2>0$,
$$
|\sigma^{ij}(t,\cdot)|_{C^1(\ell_2)}+|\mu^i(t,\cdot)|_{C^{|\sigma_0-1|+\kappa}(\ell_2)}+
|\nu(t,\cdot)|_{C^{|\sigma_0-1|+\kappa}(\ell_2)}\leq K_2,  \quad \quad \forall i,j,\omega,t.
$$

(ii) For any $\varepsilon>0$ there exists $K_3=K_3(\varepsilon, T)$ so that
\begin{align*}
&\|f^i(t, \cdot, u(\cdot))-f^i(t, \cdot, v(\cdot))\|_{L_2}+\|h(t, \cdot, u(\cdot))-h(t, \cdot, v(\cdot))\|_{H^{-1}_2}\\
&\qquad +\|g(t, \cdot, u(\cdot))-g(t, \cdot, v(\cdot))\|_{H^{-1+\sigma_0}_2(\ell_2)}\leq \varepsilon \|u\|_{H^{1}_2}+ K_3\|u\|_{L_2},
\end{align*}
for any $u,v \in H^{1}_2$ and $\omega, t$.
\end{assumption}

 See Example \ref{example 7} for  examples satisfying Assumption \ref{ass 2}(ii).

\medskip

 Denote
$$
f^i_0=f^i(t,x,0), \quad h_0=h(t,x,0), \quad g_0=g(t,x,0).
$$
We will use a well-known inequality (eg. \cite[Lemma 5.2]{Kr99})
\begin{align}\label{e:Kr9}
\|au\|_{H^{\sigma}_2}\leq N( \sigma ,d) |a|_{C^{\gamma}}\|u\|_{H^{\sigma}_2},
\end{align}
where $\gamma \geq |\sigma|$ is $\sigma$ is an integral, and otherwise $\gamma >|\sigma|$.

The following is the one of the two main results of this paper.
\begin{theorem}
                  \label{thm divtype}
Suppose Assumptions \ref{ass 1} and \ref{ass 2} hold. There exists $\kappa_0>0$ depending only on
$K,\gamma,\beta, d, T$ so that if $\sup_{\omega,i,j, t \le T}|\sigma^{ij}(t, \cdot)|_{C^1(\ell_2)}\leq \kappa_0$ then equation
(\ref{eqn 7.04.5}) with initial data $u_0\in U^0_2$ has a unique solution $u\in \bH^1_2(T)$, and for
this solution
\begin{equation}
         \label{estimate div}
\|u\|^2_{\cH^1_2(T)}\leq
N\left(\bE\|u_0\|^2_{L_2}+\|f^i_0\|^2_{\bL_2(T)}+\|h_0\|^2_{\bH^{-1}_2(T)}
+\|g_0\|^2_{\bH^{-1+\sigma_0}_2(\ell_2)}\right),
\end{equation}
where $N$ depends only on $\gamma,\beta, \delta, d, K$ and $T$.
\end{theorem}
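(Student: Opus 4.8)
The plan is to prove existence, uniqueness and the estimate (\ref{estimate div}) in three stages: an a priori estimate for any solution, existence for the \emph{linearized} equation (treating $f^i,h,g^k$ as given sources) by the method of continuity anchored at Theorem \ref{thm laplace}, and finally removal of the $u$-dependence of the sources by a contraction argument based on Assumption \ref{ass 2}(ii). The natural space is $\cH^1_2(T)$, i.e. Definition \ref{def 3.23} with $\sigma=-1$, so that $\bD u=D_i(a^{ij}u_{x^j}+b^iu+f^i)+cu+h\in\bH^{-1}_2(T)$ and $\bS u=\sigma^{ijk}u_{x^ix^j}+\mu^{ik}u_{x^i}+\nu^ku+g^k\in\bH^{-1+\sigma_0}_2(T,\ell_2)$ match the right side of (\ref{eqn 7.04.5}). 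For the a priori estimate the core is the coercive energy inequality, namely the $H^{-1}_2$--$H^1_2$ dual form of (\ref{eqn 8.27.1}): with $w=u-u_0-v$ and $v$ the stochastic convolution (\ref{e:defv}),
$$(k_{1-\beta}*\|w\|^2_{L_2})(t)\le 2\int_0^t\langle\bD u(s),w(s)\rangle\,ds,$$
which for $\cH^1_2$-solutions follows from Lemma \ref{Banach}(iii) by first taking smooth coefficients and data (so $u\in\cH^2_2(T)$) and then passing to the limit. Expanding $\langle\bD u,w\rangle=\langle\bD u,u\rangle-\langle\bD u,u_0+v\rangle$ and integrating by parts, the ellipticity in Assumption \ref{ass 1}(ii) turns $\langle\bD u,u\rangle$ into $-(a^{ij}u_{x^j},u_{x^i})+\cdots\le-\delta\|\nabla u\|^2_{L_2}+\cdots$, while the remaining deterministic terms are absorbed by Young's inequality in terms of $\|u\|_{L_2}$, $\|f^i\|_{L_2}$ and $\|h\|_{H^{-1}_2}$.

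The pairings against $v$ require $v\in\bH^1_2(T)$, which is exactly what Lemma \ref{lemma main estimate} delivers: since $\bS u\in\bH^{-1+\sigma_0}_2(T,\ell_2)$, the solution operator gains the correct order so that $\|v\|_{\bH^1_2(T)}\le N\|\bS u\|_{\bH^{-1+\sigma_0}_2(T,\ell_2)}$. By the multiplier bound (\ref{e:Kr9}) and Assumption \ref{ass 2}(i), one has $\|\bS u\|_{\bH^{-1+\sigma_0}_2(T,\ell_2)}\le N|\sigma|_{C^1(\ell_2)}\|u\|_{\bH^1_2(T)}+(\text{lower order}+\text{data})$, and this is precisely where the smallness hypothesis $|\sigma^{ij}|_{C^1(\ell_2)}\le\kappa_0$ enters: the resulting $\|\nabla u\|^2$-type contribution must be small enough (relative to $\delta$ and the constant of Lemma \ref{lemma main estimate}) to be absorbed into the coercive $\delta\|\nabla u\|^2$. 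When $\gamma\ge1/2$ one has $\sigma\equiv0$ by Assumption \ref{ass 1}(iii) and this difficulty is absent. After absorption, Proposition \ref{lemm 8.28.6} converts the $k_{1-\beta}$-convolved bound into an inequality of the form (\ref{eqn 1.27.2}), and the fractional Gronwall Lemma \ref{gronwall} closes it to give (\ref{estimate div}); uniqueness is then immediate by applying the estimate to the difference of two solutions.

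For existence I would run the method of continuity: set $\bD_\lambda u=(1-\lambda)\Delta u+\lambda[D_i(a^{ij}u_{x^j}+b^iu)+cu]$ and $\bS_\lambda u=\lambda[\sigma^{ijk}u_{x^ix^j}+\mu^{ik}u_{x^i}+\nu^ku]$, so that $\lambda=0$ is the model equation solvable by Theorem \ref{thm laplace} and $\lambda=1$ is (\ref{eqn 7.04.5}). Since $\delta$ and $\kappa_0$ are $\lambda$-independent, the a priori estimate holds uniformly in $\lambda\in[0,1]$, and the standard argument---the set of $\lambda$ for which the linearized equation is uniquely solvable in $\cH^1_2(T)$ is both open and closed---propagates solvability from $\lambda=0$ to $\lambda=1$. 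To restore the dependence of the sources on $u$, I would then define the map $v\mapsto u$ sending $v\in\bH^1_2(T)$ to the solution of the linearized equation with sources $f^i(v),h(v),g^k(v)$; by the linear estimate together with Assumption \ref{ass 2}(ii) with $\varepsilon$ chosen so that $N\varepsilon<1$, this map is a contraction modulo a lower-order term $K_3\|v_1-v_2\|_{L_2}$ that is controlled on a short interval $[0,T_1]$ via Lemma \ref{gronwall}, and iterating over successive subintervals yields the solution on all of $[0,T]$.

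The main obstacle is the a priori estimate when $\gamma<1/2$, where the genuinely present coefficient $\sigma^{ijk}u_{x^ix^j}$ places $u_{xx}$ inside the stochastic term---a feature with no analogue in classical SPDE theory. The only mechanism controlling it within the $H^1_2$ energy balance is the smallness $\kappa_0$ together with the sharp smoothing of Lemma \ref{lemma main estimate}, and quantifying how small $\kappa_0$ must be relative to $\delta$ and that smoothing constant is the delicate point.
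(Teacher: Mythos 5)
Your overall architecture (a priori estimate, method of continuity anchored at Theorem \ref{thm laplace}, then a fixed-point argument for the semilinear terms) matches the paper's, but the core of your a priori estimate contains a genuine gap. You take $w=u-u_0-v$ with $v$ the raw stochastic convolution \eqref{e:defv}, $v(t,x)=\Gamma(1+\beta-\gamma)^{-1}\sum_k\int_0^t(t-s)^{\beta-\gamma}(\bS u)^k(s,x)\,dW^k_s$, and claim $\|v\|_{\bH^1_2(T)}\le N\|\bS u\|_{\bH^{-1+\sigma_0}_2(T,\ell_2)}$ by Lemma \ref{lemma main estimate}. That lemma does not apply to this $v$: it concerns the stochastic convolution with the kernel $P_{\beta,\gamma}(t-s,x-y)$, which contains the spatial kernel $p$ and therefore gains up to two spatial derivatives, whereas \eqref{e:defv} involves no convolution in $x$ at all and gains nothing; $\|v(t)\|_{H^s_2}$ is controlled only by the $H^s_2(\ell_2)$-norm of $\bS u$ in the same index $s$. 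Since $\bS u=\sigma^{ij}u_{x^ix^j}+\cdots+g$ lies only in $\bH^{-1+\sigma_0}_2(T,\ell_2)$ (and $-1+\sigma_0=-1$ in the hard case $\gamma<1/2$, where $\sigma^{ij}$ is genuinely present), your $v$ is not in $\bH^1_2(T)$, the pairing $\int_0^t\langle\bD u,v\rangle\,ds$ with $\bD u\in\bH^{-1}_2$ is undefined, and the energy balance cannot be closed. The failure occurs precisely at the point you yourself flag as delicate.

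The paper circumvents this with a different decomposition: it first solves the model equation $\partial^\beta_tv=\Delta v+D_if^i+h+\sum_k\partial^\gamma_t\int_0^t(\sigma^{ijk}u_{x^ix^j}+g^k)\,dW^k_s$, so that this $v$ \emph{is} the convolution with $P_{\beta,\gamma}$ and Theorem \ref{thm laplace} (resting on Lemma \ref{lemma main estimate}) legitimately gives $\|v\|_{\bH^1_2(T)}\le N(\cdots+\|\sigma^{ij}u_{x^ix^j}+g\|_{\bH^{-1+\sigma_0}_2(T,\ell_2)})$; then $\bar u=u-v$ satisfies the purely deterministic divergence-form equation $\partial^\beta_t\bar u=D_i(a^{ij}\bar u_{x^j}+\bar f^i)$ with $\bar f^i=a^{ij}v_{x^j}-v_{x^i}\in\bL_2(T)$, to which Zacher's deterministic energy estimate applies. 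The smallness $|\sigma^{ij}|_{C^1(\ell_2)}\le\kappa_0$ then enters exactly as you anticipate, to absorb $N_0\kappa_0\|u\|^2_{\bH^1_2(T)}$ back into the left-hand side via \eqref{e:Kr9}, and the lower-order coefficients, Proposition \ref{lemm 8.28.6} and the fractional Gronwall lemma proceed essentially as in your sketch (your short-time iteration for the nonlinearity is a workable alternative to the paper's $\cR^m$-contraction). To salvage an energy-method proof you must subtract the solution of the model equation, not the bare stochastic convolution; as written, the first step of your estimate does not go through.
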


\begin{proof}
 {\bf{A: Linear case}}. Let $f^i$, $h$ and
 $g^k$ depend only on  $(\omega, t, x)$.
Due to
the method of continuity and solvability result of Lemma \ref{thm
laplace},
it is enough to show that there exists
$\kappa_0>0$ so that if
$|\sigma^{ij} (t, \cdot)|_{C^1(\ell_2)}\leq \kappa_0$
and $u\in \cH^1_2(T)$ is a solution of \eqref{eqn 7.04.5},
then the estimate (\ref{estimate div}) holds.
 We refer the reader to the proof of \cite[Theorem 5.1]{Kr99} for details.

\medskip

\noindent
{\it Step 1.}  Assume $b^i=c=\mu^{ik}=\nu^k=0$.

By Theorem \ref{thm laplace}, the equation
$$
\partial^{\beta}_t v=\Delta v +D_if^i+h+
 \sum_{k=1}^\infty\partial^{\gamma}_t \int^t_0 (\sigma^{ijk}u_{x^ix^j}+g^k)
dW^k_s
$$
with initial data $u_0$ has a unique solution $v\in \cH^1_2(T)$, and
\begin{equation}
                    \label{01.24.11}
\|v\|^2_{\bH^1_2(T)}\leq
N\left(\bE\|u_0\|^2_{L_2}+\|D_if^i+h\|^2_{\bH^{-1}_2(T)}+\|\sigma^{ij}u_{x^ix^j}+g\|^2_{\bH^{-1+\sigma_0}_2(T,\ell_2)}\right).
\end{equation}

Note that for each $\omega$, $\bar{u}=u-v$ satisfies the
deterministic equation
$$
\partial^{\beta}_t \bar{u}=D_i(a^{ij}
u_{x^j})-\Delta
v=D_i(a^{ij}
\bar{u}_{x^j} +a^{ij}v_{x^j}-v_{x^i}) =
D_i(a^{ij}\bar{u}_{x^j}+\bar{f}^i),  \quad \bar{u}(0)=0
$$
where $\bar{f}^i=\sum_{j=1}^d a^{ij}v_{x^j}- v_{x^i}$.
By a result for the
deterministic equations (see \cite{Za}),
$$
\|\bar{u}\|_{\bH^1_2(T)}\leq N\|\bar{f}^i\|_{\bL_2(T)}\leq
N\|v\|_{\bH^1_2(T)}.
$$
Note that $\sigma^{ij}=0$ if $\gamma \geq 1/2$. If $\gamma<1/2$ then
$-1+\sigma_0=-1$ and so by \eqref{e:Kr9} for every $t \le T$
\begin{align*}
\|\sigma^{ij} (t, \cdot) u_{x^ix^j}(t, \cdot) \|_{H^{-1}_2(\ell_2)}&\leq
N|\sigma^{ij}(t, \cdot)|_{C^1(\ell_2)}\|u_{x^ix^j}(t, \cdot)\|_{H^{-1}_2}\\
&\leq
N\sup_{\omega,i,j, t \le T}|\sigma^{ij}(t, \cdot)|_{C^1(\ell_2)}\|u(t, \cdot)\|_{H^1_2}.
\end{align*}

 This and (\ref{01.24.11}) certainly lead to
\begin{eqnarray}
               \label{N_0}
\|u\|^2_{\bH^1_2(T)}&\leq& N_0
\sup_{\omega,i,j, t \le T}|\sigma^{ij}(t, \cdot)|_{C^1(\ell_2)}\|u\|^2_{\bH^1_2(T)}\\
&&+N_0\left(\|u_0\|^2_{U^0_2}+\|D_if^i+h\|^2_{\bH^{-1}_2(T)}
+\|g\|^2_{\bH^{-1+\sigma_0}_2(T,\ell_2)}\right), \nonumber
\end{eqnarray}
where $N_0$ depends only on
$\beta, \gamma, \delta, d, T$ and $K$. Note
that $\|D_if^i\|_{H^{-1}_2}\leq N\|f^i\|_{L_2}$.  Hence for the
desired estimate it is enough to take
$$\kappa_0=(2N_0)^{-1/2}.
$$

\medskip

\noindent
{\it Step 2.}  Take $\kappa_0$ from
{\it Step 1}, and assume
$\sup_{\omega,i,j, t \le T}|\sigma^{ij}(t, \cdot)|_{C^1(\ell_2)}\leq \kappa_0$. Then by the result of
{\it Step 1}, for each $t\leq T$,
\begin{eqnarray}
\|u\|^2_{\bH^1_2(t)} &\leq&
N\|b^iu+f^i\|^2_{\bL_2(t)}+N\|cu+h\|^2_{\bH^{-1}_2(t)}
 + N\|\mu^iu_{x^i}+\nu
u+g\|^2_{\bH^{-1+\sigma_0}_2(t,\ell_2)}.       \label{01.26.1}
\end{eqnarray}
Note that, by \eqref{e:aii}
$$
\|b^iu\|_{L_2}+\|cu\|_{H^{-1}_2} \leq \|b^iu\|_{L_2}+\|cu\|_{L_2}\leq N\|u\|_{L_2}\leq \varepsilon \|u\|_{H^1_2}+N\|u\|_{H^{-1}_2}.
$$
Also, since $-1+\sigma_0<1$, by a Sobolev embedding theorem and \eqref{e:Kr9}, for any $\varepsilon>0$
$$\|\nu u\|_{H^{\sigma_0-1}_2}\leq N|\nu|_{C^{|\sigma_0-1|+\kappa}(\ell_2)}\|u\|_{H^{\sigma_0-1}_2}\leq N \varepsilon \|u\|_{H^1_2}+N(\varepsilon)\|u\|_{H^{-1}_2}.
$$
By the assumption, $\nu^i=0$ unless $-1+\sigma_0<0$. Therefore,
\begin{eqnarray*}
 \|\mu^i u_{x^i}\|_{H^{-1+\sigma_0}_2}\leq N |\mu^i|_{C^{|\sigma_0-1|+\kappa}(\ell_2)}\|u_x\|_{H^{-1+\sigma_0}_2}
 \leq N\|u\|_{H^{\sigma_0}_2}\leq N \varepsilon \|u\|_{H^1_2}+N(\varepsilon)\|u\|_{H^{-1}_2}.
\end{eqnarray*}
Taking sufficiently small $\varepsilon > 0$ and using (\ref{01.26.1}), we get
for any $t\leq T$,
\begin{equation}
                   \label{e:4.8}
\|u\|^2_{\bH^1_2(t)}\leq N \|u\|^2_{\bH^{-1}_2(t)}+ N (\|u_0\|^2_{U^0_2}+\|f^i\|^2_{\bL_2(t)}+\|h\|^2_{\bH^{-1}_2(t)}+\|g\|^2_{\bH^{-1+\sigma_0}_2(t,\ell_2)}).
\end{equation}
Since
$$
\bD u = D_i (a^{ij}u_{x_j}+b^i u +f)+cu+h, \quad \bS u = \sigma^{ij}u_{x_ix_j} + \mu^{i}u_{x_i}+\nu u +g
$$
and  $D_i: H^{\gamma}_2 \to H^{\gamma-1}_2$ is a bounded operator,
we have
\begin{equation}\label{e:4.9}
 \| \bD u \|^2_{\bH^{-1}_2(t)} \leq N \Big( \| u\|^2_{\bH^1_2 (t)}
 + \| f\|^2_{\bL_2(t)} \Big)
  \quad \hbox{and} \quad
  \| \bS u \|^2_{\bH^{-1}_2(t, \ell_2)} \leq N \Big( \| u\|^2_{\bH^1_2 (t)}
   + \| g\|^2_{\bH^{-1}_2(t, \ell_2)} \Big) .
\end{equation}
This,  \eqref{e:4.8}   and
Proposition  \ref{lemm 8.28.6} (with $\sigma=-1$) yield
$$
(k_{1-\beta}*\bE\|u\|^2_{H^{-1}_2})(t)\leq N (\|u_0\|^2_{U^0_2}+\|f^i\|^2_{\bL_2(t)}+\|h\|^2_{\bH^{-1}_2(t)}+\|g\|^2_{\bH^{-1+\sigma_0}_2(t,\ell_2)}+\|u\|^2_{\bH^{-1}_2(t)}).
$$
Denote $\eta(t)=\bE\int^t_0\|u(s, \cdot)\|^2_{H^{-1}_2}ds$. Then taking the convolution with $k_{\beta}$ (recall $k_{\beta}*k_{1-\beta}=1$),  we get
\begin{equation}
                     \label{gronwall18}
\eta(t)\leq N M+ N(k_{\beta}*\eta)(t),
\end{equation}
where
$$M:=\|u_0\|^2_{U^{0}_2}+\|f^i\|^2_{\bL_2(T)}+\|h\|^2_{\bH^{-1}_2(T)}+\|g\|^2_{\bH^{-1+\sigma_0}_2(T,\ell_2)}.
 $$
 Consequently (\ref{e:4.8}), (\ref{gronwall18}) and Gronwall's lemma (Lemma \ref{gronwall})  finish the proof for the linear case.

 \vspace{2mm}

 {\bf{B: Non-linear case}}. The proof is  identical to that of the non-divergence type case. See the proof of Theorem \ref{main} below (it is enough to replace $\sigma$ by $-1$).
\end{proof}

\section{\bf SPDE of non-divergence form type}

\subsection{$L_2$-theory for fractional time SPDE of non-divergence form type}

In this subsection,
we  study the equation of non-divergence type
 \begin{eqnarray}
                 \nonumber
  \partial^{\beta}_t u&=&\left(a^{ij}u_{x^ix^i}+b^iu+cu+f(u)\right)
\\
    &&+ \sum_{k=1}^\infty\partial^{\gamma}_t \int^t_0 (\sigma^{ijk}u_{x^ix^j}+\mu^{ik} u_{x^i}
    +\nu^ku +g^k(u))\, dW^k_s
    \label{non-div eqn}
 \end{eqnarray}
where  the coefficients $a^{ij}, b^i, c, \sigma^{ijk}, \mu^{ik}, \nu^k$  are functions depending on
$(\omega,t,x)$ and the functions $f, g^k$ depend on $(\omega,t,x)$ and the unknown $u$.
Recall that $\sigma_0$ is the constant defined by \eqref{e:3.23}

\begin{assumption}
                \label{ass 3}

(i) The coefficients $a^{ij}$ are uniformly continuous in $x$, that is for any $\varepsilon>0$, there exists $\delta>0$ so that
$$
|a^{ij}(t,x)-a^{ij}(t,y)| < \varepsilon, \quad \forall i,j,\omega,t
$$
whenever $|x-y|<\delta$.

(ii) H\"older continuity of $a^{ij}$ when $\sigma \neq 0$ : if $\sigma \neq 0$, there exists  constants $\kappa, K_1>0$ so that
\begin{equation}
                    \label{weaken}
|a^{ij}(t,\cdot)|_{C^{|\sigma|+\kappa}}<K_1, \quad  \forall i,j,\omega,t.
\end{equation}

(iii) For any $i,j,\omega$ and  $t$
\begin{equation}
                      \label{weaken2}
|b^i(t,\cdot)|_{C^{|\sigma|+\kappa}}+| c(t,\cdot)|_{C^{|\sigma|+\kappa}} +|\sigma^{ij}(t,\cdot), \mu^i(t,\cdot),\nu(t,\cdot)|_{C^{|\sigma+\sigma_0|+\kappa}(\ell_2)} \leq K_2 < \infty.
\end{equation}

(iv)  $|\sigma^{ij}(t,x)|_{\ell_2}\leq \kappa_0$,
where $\kappa_0$ is
the constant in Theorem \ref{thm divtype}.

(v) For any $\varepsilon>0$ there exists $K_3=K_3(\varepsilon)$ so that
\begin{equation}
                        \label{Sobolev}
\|f(t, \cdot, u(\cdot))-f(t, \cdot, v(\cdot))\|_{H^{\sigma}_2}+\|g(t, \cdot, u(\cdot))-g(t, \cdot, v(\cdot))\|_{H^{\sigma+\sigma_0}_2}\leq \varepsilon \|u\|_{H^{\sigma+2}_2}+ K_3\|u\|_{H^{\sigma+1}_2},
\end{equation}
for any $u,v \in H^{\sigma+2}_2$.
\end{assumption}

\begin{remark}
If $\sigma$ is  integer then one can slightly weaken (\ref{weaken}) and (\ref{weaken2}) and take $\kappa=0$ as is done in \cite{Kr99}.
\end{remark}

\begin{example}
               \label{example 7}
  (i) Let $\delta:=\sigma+2-d/2>0$ and $f_0=f_0(x)\in H^{\sigma}_2$. Take
  $$
  f(x,u)=f_0(x) \sup_x |u|.
  $$
  Then by a Sobolev embedding
  \begin{eqnarray}
                 \nonumber
  \|f(u)-f(v)\|_{H^{\sigma}_2} &\leq& \|f_0\|_{H^{\sigma}_2} \sup_x |u-v|\leq N\|u-v\|_{H^{\sigma+2-\delta/2}_2}\\
  &\leq&\varepsilon  \|u-v\|_{H^{\sigma+2}_2}+K \|u-v\|_{H^{\sigma}_2}. \label{same}
  \end{eqnarray}

  (ii) Fix $\varepsilon>0$ and  take $\delta\in (0,1)$ and a random $C^{|\sigma|+\varepsilon}$-function $a(t,x)$. Let
  $$
  f(t,x,u)=a(t,x)(-\Delta)^{\delta}u, \quad  |a(t,\cdot)|_{C^{|\sigma|+\varepsilon}}\leq K, \quad \forall \omega,t.
  $$
  Then the  argument used to prove (\ref{same}) easily leads to (\ref{Sobolev}).
\end{example}

\vspace{3mm}
Denote
$$
f_0=f(t,x,0), \quad g_0=g(t,x,0).
$$

Here is the second main result of this paper.
\begin{theorem}
                   \label{main}
 Let $\sigma \in \bR$ and Assumptions \ref{ass 1} and  \ref{ass 3} hold. Then for any $f\in \bH^{\sigma}_2(T)$, $g\in \bH^{\sigma+\sigma_0}_2(T,\ell_2)$  and  $u_0\in U^{\sigma+1}_2$,  equation (\ref{non-div eqn}) admits a unique solution $u\in \cH^{\sigma+2}_2(T)$, and for this solution we have
\begin{equation}
                    \label{main estimate}
\|u\|_{\bH^{\sigma+2}_2(T)}\leq N\left(\|u_0\|_{U^{\sigma+1}_2}+\|f_0\|_{\bH^{\sigma}_2(T)}+\|g_0\|_{\bH^{\sigma+\sigma_0}_2(T,\ell_2)}\right),
\end{equation}
where $N$ depends only on $d, \beta, \gamma, \delta, K$ and $T$.
\end{theorem}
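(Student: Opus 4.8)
The plan is to follow closely the architecture of the proof of Theorem~\ref{thm divtype}: first treat the \emph{linear} case, in which $f$ and $g$ do not depend on $u$, via the method of continuity anchored at the model equation of Theorem~\ref{thm laplace}; then deduce the quasi-linear case by a contraction/iteration argument resting on Assumption~\ref{ass 3}(v). By the method of continuity (interpolating the elliptic family $(1-\lambda)\Delta+\lambda a^{ij}D_{ij}$, which keeps uniform ellipticity) together with the solvability of the model equation, it suffices to establish the a priori estimate: every solution $u\in\cH^{\sigma+2}_2(T)$ of \eqref{non-div eqn} obeys the full-norm bound. One first reduces to $\sigma=0$ by applying $(1-\Delta)^{\sigma/2}$ to \eqref{non-div eqn}; the leading term yields $a^{ij}\big((1-\Delta)^{\sigma/2}u\big)_{x^ix^j}$ plus the commutator $[(1-\Delta)^{\sigma/2},a^{ij}]u_{x^ix^j}$, which by Assumption~\ref{ass 3}(ii) and \eqref{e:Kr9} is of order one lower and may be absorbed into the lower-order data, and the stochastic coefficients are handled identically. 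So I take $\sigma=0$ henceforth.

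For the a priori estimate I first assume $b^i=c=\mu^{ik}=\nu^k=0$. Given a solution $u$, I solve the model equation
\begin{equation*}
\partial^\beta_t v=\Delta v+f+\sum_{k}\partial^\gamma_t\int_0^t(\sigma^{ijk}u_{x^ix^j}+g^k)\,dW^k_s,\qquad v(0)=u_0,
\end{equation*}
by Theorem~\ref{thm laplace}, treating $\sigma^{ijk}u_{x^ix^j}$ as a known free term, so that $\|v\|^2_{\bH^2_2(T)}\le N\big(\|u_0\|^2_{U^1_2}+\|f\|^2_{\bL_2(T)}+\|\sigma^{ij}u_{x^ix^j}+g\|^2_{\bH^{\sigma_0}_2(T,\ell_2)}\big)$. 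Then $\bar u:=u-v$ satisfies the \emph{deterministic} non-divergence equation $\partial^\beta_t\bar u=a^{ij}\bar u_{x^ix^j}+(a^{ij}-\delta^{ij})v_{x^ix^j}$ with $\bar u(0)=0$, so the deterministic maximal-regularity theory of \cite{Za} (covering uniformly continuous leading coefficients through freezing) gives $\|\bar u\|_{\bH^2_2(T)}\le N\|(a^{ij}-\delta^{ij})v_{x^ix^j}\|_{\bL_2(T)}\le N\|v\|_{\bH^2_2(T)}$, whence $\|u\|_{\bH^2_2(T)}\le N\|v\|_{\bH^2_2(T)}$. Since $\sigma^{ijk}=0$ when $\gamma\ge1/2$ (so $\sigma_0=0$ whenever $\sigma^{ij}$ is present), the remaining term is controlled by $\|\sigma^{ij}u_{x^ix^j}\|_{\bL_2(T,\ell_2)}\le\sup_x|\sigma^{ij}|_{\ell_2}\,\|u\|_{\bH^2_2(T)}\le\kappa_0\|u\|_{\bH^2_2(T)}$ via Assumption~\ref{ass 3}(iv); taking $\kappa_0$ small enough absorbs it and yields $\|u\|^2_{\bH^2_2(T)}\le N\big(\|u_0\|^2_{U^1_2}+\|f\|^2_{\bL_2(T)}+\|g\|^2_{\bH^{\sigma_0}_2(T,\ell_2)}\big)$.

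Next I reinstate the lower-order coefficients. Applying the previous step on each interval $[0,t]$ and estimating $b^iu_{x^i},cu$ and $\mu^iu_{x^i},\nu u$ through \eqref{e:Kr9}, a Sobolev embedding, and the interpolation $\|u\|_{H^1_2}\le\varepsilon\|u\|_{H^2_2}+N(\varepsilon)\|u\|_{L_2}$ — the crucial structural point being that $\sigma_0<1$ precisely when $\gamma<1/2+\beta/2$, while $\mu^{ik}=0$ once $\gamma\ge1/2+\beta/2$ by Assumption~\ref{ass 1}(iii), so the first-order stochastic term always lands strictly below the top regularity — I obtain, after choosing $\varepsilon$ small, $\|u\|^2_{\bH^2_2(t)}\le N\|u\|^2_{\bL_2(t)}+N\,(\text{data})$. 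Routing $\bD u$ and $\bS u$ through the boundedness of the coefficients as in \eqref{e:4.9} and applying Proposition~\ref{lemm 8.28.6} with $\sigma=0$ converts this into $(k_{1-\beta}*\bE\|u\|^2_{L_2})(t)\le N\,(\text{data})+N\|u\|^2_{\bL_2(t)}$; convolving with $k_\beta$ and invoking the fractional Gronwall Lemma~\ref{gronwall} on $\eta(t):=\bE\int_0^t\|u\|^2_{L_2}\,ds$ closes the a priori estimate, so the method of continuity delivers unique solvability and \eqref{main estimate} in the linear case.

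Finally, for the quasi-linear equation I run the Picard iteration $u_{n+1}:=\Phi(u_n)$, where $\Phi(w)\in\cH^2_2(T)$ is the linear-case solution of \eqref{non-div eqn} with $f(w),g(w)$ frozen as free terms. For the difference $\Phi(u_n)-\Phi(u_{n-1})$, the linear a priori estimate combined with Assumption~\ref{ass 3}(v) (choosing $\varepsilon$ small to absorb the top-order $H^2_2$ part of $f(u_n)-f(u_{n-1})$ and $g(u_n)-g(u_{n-1})$) gives $\|u_{n+1}-u_n\|^2_{\bH^2_2(t)}\le N\|u_n-u_{n-1}\|^2_{\bL_2(t)}$; passing once more through Proposition~\ref{lemm 8.28.6} and the fractional Gronwall inequality shows the successive differences satisfy a fractional-integral recursion whose iterated constants decay summably (as in the Mittag-Leffler series), so $(u_n)$ is Cauchy in $\cH^2_2(T)$ and its limit solves \eqref{non-div eqn}, uniqueness following from the same estimate with zero data. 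I expect the main obstacle to be the top-order linear estimate: because the principal stochastic coefficient $\sigma^{ij}$ multiplies the second derivative of $u$ with no regularizing gain exactly when $\gamma<1/2$ (where $\sigma_0=0$), it cannot be treated perturbatively, and closing the loop forces the smallness Assumption~\ref{ass 3}(iv) to absorb $\|\sigma^{ij}u_{x^ix^j}\|$ into $\|u\|_{\bH^2_2}$; correctly placing the first-order stochastic term via the dichotomy $\gamma\lessgtr1/2+\beta/2$ and substituting the fractional energy inequality plus Lemma~\ref{gronwall} for the classical Gronwall step are the delicate parts.
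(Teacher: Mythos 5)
Your overall architecture (linear case by a priori estimate plus method of continuity, then a Picard/contraction argument using Assumption~\ref{ass 3}(v), Proposition~\ref{lemm 8.28.6} and the fractional Gronwall Lemma~\ref{gronwall}) matches the paper, and your nonlinear step is essentially the paper's proof that $\cR^m$ is a contraction. But there is a genuine gap in your linear case. You transplant Step~1 of the proof of Theorem~\ref{thm divtype} verbatim: solve the model equation for $v$ with $\sigma^{ijk}u_{x^ix^j}+g^k$ as a free stochastic term, and then claim that $\bar u=u-v$, which solves the \emph{deterministic} non-divergence equation $\partial^\beta_t\bar u=a^{ij}\bar u_{x^ix^j}+(a^{ij}-\delta^{ij})v_{x^ix^j}$, obeys $\|\bar u\|_{\bH^2_2(T)}\le N\|(a^{ij}-\delta^{ij})v_{x^ix^j}\|_{\bL_2(T)}$ by ``the deterministic maximal-regularity theory of \cite{Za}.'' That reference provides only the variational/energy ($H^1_2$, divergence-form) estimate used in Theorem~\ref{thm divtype}; it does not contain an $H^2_2$ maximal-regularity theorem for time-fractional \emph{non-divergence} equations with variable, merely uniformly continuous $a^{ij}$. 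Such a result is essentially the deterministic special case of the very theorem you are proving, and establishing it requires exactly the freezing-of-coefficients and partition-of-unity argument you have omitted. The paper avoids this circularity by a different decomposition: in Step~1 it treats only $x$-independent coefficients, reducing to Theorem~\ref{thm divtype} via $(1-\Delta)^{(\sigma+1)/2}$, and then performs the perturbation (condition \eqref{purbation}, estimate \eqref{middle estimate} with the interpolation \eqref{holder}) and the partition of unity at the level of the full stochastic equation.

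A secondary problem is your reduction to $\sigma=0$. You claim the commutator $[(1-\Delta)^{\sigma/2},a^{ij}]u_{x^ix^j}$ is ``of order one lower'' using only Assumption~\ref{ass 3}(ii), i.e.\ $a^{ij}\in C^{|\sigma|+\kappa}$. Gaining a full derivative in a commutator of this type costs roughly one extra derivative of $a^{ij}$, so under the stated hypotheses this step is not justified. The paper never forms this commutator: it works directly at level $\sigma$, using \eqref{e:Kr9} with the $C^{|\sigma|+\kappa}$ (respectively $C^{|\sigma+\sigma_0|+\kappa}$) norms exactly as Assumption~\ref{ass 3} provides them, and exploits that the \emph{difference} $a^{ij}-a^{ij}_0$ is small in $C^0$ so that interpolation \eqref{holder} makes its $C^{|\sigma|+\kappa/2}$ norm small. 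You should either restrict the commutator reduction to the constant-coefficient step (where it is harmless, as in the paper's Step~1) or carry out the frozen-coefficient perturbation at general $\sigma$ as in Steps~2--3 of the paper's proof.
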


\begin{proof}
By considering $u-v$ if needed, where $v$ is the solution of
$ \partial^{\beta}_t v= \Delta v$ with $v(0)=u_0$,
we may assume without loss of generality that $u_0=0$.

\medskip
{\bf{A: Linear case}}.  Let $f^i$, $h$ and
 $g^k$ depend only on   $(\omega, t, x)$.
Due to the method of continuity we only need to prove that the estimate
\eqref{main estimate} holds given that a solution already exists.

\medskip

\noindent
{\it Step 1.}
  Assume that  all the coefficients  are independent of $x$, so that equation (\ref{non-div eqn}) is of type (\ref{eqn 7.04.5}). By
applying the operator $(1-\Delta)^{(\sigma+1)/2}$ to equation (\ref{non-div eqn}), one can simplify the problem to the case  $\sigma=-1$. In this case
all the claims follow from Theorem \ref{estimate div}.

\medskip

\noindent
{\it Step 2.}
Next, we weaken the condition in
{\it Step 1} by proving that there exists a $\varepsilon_1\in (0,\kappa_0]$ so that the theorem holds if
\begin{equation}
        \label{purbation}
|a^{ij}(t,x)-a^{ij}(t,y)|+|\sigma^{ij}(t,x)-\sigma^{ij}(t,y)|_{\ell_2}\leq \varepsilon_1  \quad \forall i,j,\omega,t,x,y.
\end{equation}

Fix $x_0\in \bR^d$ and denote
$$
a^{ij}_0(t,x)=a^{ij}(t,x_0), \quad \sigma^{ij}_0(t,x)=\sigma^{ij}(t,x_0).
$$
Note that equation (\ref{non-div eqn}) can be written as
$$
\partial^{\beta}_t u=\left(a^{ij}_0u_{x^ix^i}+\bar{f}\right)
    + \sum_{k=1}^\infty\partial^{\gamma}_t \int^t_0 (\sigma^{ijk}_0u_{x^ix^j}+\bar{g}^k)\, dW^k_s,
$$
where
$$
\bar{f}:=(a^{ij}-a^{ij}_0)u_{x^ix^j}+b^iu_{x^i}+cu+f,
$$
$$
\bar{g}^k:=(\sigma^{ijk}-\sigma^{ijk}_0)u_{x^ix^j}+\mu^{ik}u_{x^i}+\nu^k u +g^k.
$$
Note that the coefficients $a^{ij}_0$ and $\sigma^{ij}_0$  are independent of $x$.  By the result of
{\it Step 1}, for each $t\leq T$,
\begin{equation}
                \label{middle estimate}
\|u\|_{\bH^{\sigma+2}_2(t)}\leq N\left(\|u_0\|_{U^{\sigma+1}_2}+\|\bar{f}\|_{\bH^{\sigma}_2(t)}+\|\bar{g}\|_{\bH^{\sigma+\sigma_0}_2(t,\ell_2)}\right).
\end{equation}
To estimate $\bar{f}$ and $\bar{g}$ we use the following well known embedding result: for
$0\leq \alpha_1 \leq \alpha_2$ and $\alpha_2>0$
\begin{equation}
                   \label{holder}
|v|_{C^{\alpha_1}}
\le N |v|^{1-\alpha_3}_{C^0}|v|^{\alpha_3}_{C^{\alpha_2}}, \quad \alpha_3:=\frac{\alpha_1}{\alpha_2}.
\end{equation}
If $\sigma=0$, then
$$
\|(a^{ij}(t, \cdot)-a^{ij}_0(t))u_{x^ix^j}(t, \cdot)\|_{L_2}\leq N \sup_x|a^{ij}(t, \cdot)-a^{ij}_0(t)| \cdot \|u(t, \cdot)\|_{H^2_2},
$$
and otherwise,
by first using \eqref{e:Kr9}
 and then taking $\alpha_1=|\sigma|+\kappa/2$ and $\alpha_2=|\sigma|+\kappa$ in (\ref{holder}),
\begin{align*}
\|(a^{ij}(t, \cdot)-a^{ij}_0(t))u_{x^ix^j}(t, \cdot)\|_{H^{\sigma}_2}
&\leq N |a^{ij}(t, \cdot)-a^{ij}_0 (t)|_{C^{|\sigma|+\kappa/2}}  \cdot \|u(t, \cdot)\|_{H^{\sigma+2}_2}\\
&\leq N \sup_x|a^{ij}(t, \cdot)-a^{ij}_0 (t)|^{\delta} \cdot \|u(t, \cdot)\|_{H^{\sigma+2}_2}.
\end{align*}
where
$$
 \delta:=1-\frac{|\sigma|+\kappa/2}{|\sigma|+\kappa}>0.
$$

The term $\|(\sigma^{ij}(t, \cdot)-\sigma^{ij}_0(t))u_{x^ix^j}(t, \cdot)\|_{H^{\sigma+\sigma_0}_2}$ and others can be handled similarly. For instance,  by \eqref{e:Kr9}
$$
\|b^i(t, \cdot)u_{x^i}(t, \cdot)\|_{H^{\sigma}_2}
\le N |b^i(t, \cdot)|_{C^{|\sigma|+\kappa}}\cdot \|u(t, \cdot)\|_{H^{\sigma+1}_2}\leq \varepsilon \|u(t, \cdot)\|_{H^{\sigma+2}_2}+N\|u(t, \cdot)\|_{H^{\sigma}_2},
$$
and, since $\mu^i=0$ unless $\sigma_0 <1$,
\begin{align*}
\|\mu^i (t, \cdot) u_{x^i}(t, \cdot)\|_{H^{\sigma+\sigma_0}_2(\ell_2)}&\leq N |\mu(t, \cdot)|_{C^{|\sigma+\sigma_0|+\kappa}(\ell_2)} \|u(t, \cdot) \|_{H^{\sigma+\sigma_0+1}_2}
\\&\leq \varepsilon \|u(t, \cdot)\|_{H^{\sigma+2}_2}+N\|u(t, \cdot)\|_{H^{\sigma}_2}.
\end{align*}
Hence, from (\ref{middle estimate}) it follows
\begin{eqnarray*}
\|u\|_{\bH^{\sigma+2}_2(t)} &\leq& N\left(\sup_{x,t \le T} |a^{ij}-a^{ij}_0|^{\delta}+ \sup_{x,t \le T} |\sigma^{ij}-\sigma^{ij}_0|^{\delta}_{\ell_2} +\varepsilon\right)\cdot \|u\|_{\bH^{\sigma+2}_2(t)} \\
&&+N\|u\|_{\bH^{\sigma}_2(t)}+N\left(\|u_0\|_{U^{\sigma+1}_2}+\|f\|_{\bH^{\sigma}_2(t)}+\|g\|_{\bH^{\sigma+\sigma_0}_2(t,\ell_2)}\right).
\end{eqnarray*}
Take $\varepsilon, \varepsilon_1>0$ so that $\varepsilon, \varepsilon_1< (4N)^{-1}$. If we assume (\ref{purbation}) then for each $t\leq T$,
\begin{equation}
                \label{eqn 1.26.5}
\|u\|_{\bH^{\sigma+2}_2(t)}\leq N \|u\|_{\bH^{\sigma}_2(t)}+\left(\|u_0\|_{U^{\sigma+1}_2}+\|f\|_{\bH^{\sigma}_2(t)}+\|g\|_{\bH^{\sigma+\sigma_0}_2(t,\ell_2)}\right).
\end{equation}
Just as \eqref{e:4.9} and the rest of the argument
in the proof of Theorem \ref{thm divtype},
this and Gronwall's lemma (Lemma \ref{gronwall}) lead to the desired estimate.

\medskip

\noindent
{\it Step 3.}
 General linear case without condition \eqref{purbation}. Extension of
{\it Step 2} to the general case is quite straightforward
 and can be found for example in the proof of  \cite[Theorem 5.1]{Kr99}. One introduces a partition of unity $\{\zeta_n :n=1,2,\cdot\}$ of $C^{\infty}_0(\bR^d)$-functions so that (\ref{purbation}) holds on each support of $\zeta_n$. Then one estimates $u\zeta_n$ using the result of {\it Step 2} and by summing up these estimate one easily gets (\ref{eqn 1.26.5}), which is sufficient for our estimate.

\vspace{2mm}

{\bf{B: Non-linear case}}.  We modify the proof of \cite[Theorem 5.1]{Kr99}.
Recall that $\cH^{\sigma+2}_{2,0}(T)$ is defined in \eqref{e:ch0}.
  For each $u\in \cH^{\sigma+2}_{2,0}(T)$  consider the equation
\begin{eqnarray}
                 \nonumber
  \partial^{\beta}_t v&=&\left(a^{ij}
  v_{x^ix^i}+b^iv+cv+f(u)\right)
\\
    &&+ \sum_{k=1}^\infty\partial^{\gamma}_t \int^t_0 (\sigma^{ijk}v_{x^ix^j}+\mu^{ik} v_{x^i}
    +\nu^kv +g^k(u))\, dW^k_s
    \nonumber
    \end{eqnarray}
with initial data $v(0)=0$. By the above results, this equation has a unique solution
$v\in \cH^{\sigma+2}_{2, 0}(T)$.
By denoting $v=\cR u$  we can define an operator
$\cR : \,\,\cH^{\sigma+2}_{2,0}(T) \to \cH^{\sigma+2}_{2,0}(T)$.

Note that due to the interpolation  $\|\xi\|_{H^{\sigma+1}_2}\leq \varepsilon \|\xi\|_{H^{\sigma+2}_2}+N\|\xi\|_{H^{\sigma}_2}$, (\ref{Sobolev}) is equivalent to
\begin{equation}
          \label{Sobolev2}
\|f(t, \cdot, u(\cdot))-f(t, \cdot, v(\cdot))\|_{H^{\sigma}_2}+\|g(t, \cdot, u(\cdot))-g(t, \cdot, v(\cdot))\|_{H^{\sigma+\sigma_0}_2}\leq \varepsilon \|u\|_{H^{\sigma+2}_2}+ K\|u\|_{H^{\sigma}_2}
\end{equation}
for some $K=K(\eps)>0$.

By the results for the linear case and (\ref{Sobolev2}) and Proposition \ref{lemm 8.28.6}, for each $t\leq T$,
\begin{eqnarray*}
\|\cR u-\cR v\|^2_{\cH^{\sigma+2}_2(t)}&\leq& N \|f(u)-f(v)\|^2_{\bH^{\sigma}_2(t)}+N\|g(u)-g(v)\|^2_{\bH^{\sigma+\sigma_0}_2(t,\ell_2)}\\
&\leq& N\varepsilon^2\|u-v\|^2_{\cH^{\sigma+2}_2(t)}+NK^2\|u-v\|^2_{\bH^{\sigma}_2(t)}\\
&\leq& N_0\varepsilon^2\|u-v\|^2_{\cH^{\sigma+2}_2(t)}+N_1\int^t_0 (t-s)^{-1+\beta}\|u-v\|^2_{\cH^{\sigma+2}_2(s)}\,ds,
\end{eqnarray*}
where $N_1$ depends also on $\varepsilon$.
Next, we fix $\varepsilon$ so that  $\theta:=N_0\varepsilon^2<1/4$. Then repeating the above inequality and using the identity
\begin{eqnarray*}
\int^t_0 (t-s_1)^{-1+\beta}\int^{s_1}_0 (s_1-s_2)^{-1+\beta}\cdots \int^{s_{n-1}}_0 (s_{n-1}-s_n)^{-1+\beta} ds_n \cdots ds_1
=\frac{\Gamma(\beta)}{\beta\Gamma(n\beta+1)} t^{n\beta},
\end{eqnarray*}
we get
\begin{eqnarray*}
&&\|\cR^m u-\cR^m v\|^2_{\cH^{\sigma+2}_2(t)}
\\
&&\leq \sum_{k=0}^m \begin{pmatrix} m\\ k\end{pmatrix}
\theta^{m-k} (T^{\beta}N_1)^k \frac{\Gamma(\beta)}{\beta\Gamma(k\beta+1)}  \, \|u-v\|^2_{\cH^{\sigma+2}_2(t)}\\
&&\leq 2^m \theta^m \left[\max_k \left( (\theta^{-1}T^{\beta}N_1)^k\frac{\Gamma(\beta)}{\beta\Gamma(k\beta+1)}\right)\right]\, \|u-v\|^2_{\cH^{\sigma+2}_2(t)}\\
&&\leq
\frac{1}{2^m} N_2 \|u-v\|^2_{\cH^{\sigma+2}_2(t)}.
\end{eqnarray*}
For the second inequality above we use $\sum_{k=0}^m \begin{pmatrix} m\\ k\end{pmatrix}=2^m$. It follows that if $m$ is sufficiently large then
$\cR^m$ is a contraction in $\cH^{\sigma+2}_{2,0}(T)$, and this yields all the claims. The theorem is proved.
\end{proof}

\subsection{An application to SPDE driven by space-time white noise}

In this subsection, we consider a SPDE driven by space-time white noise.
We consider
\begin{equation}
     \label{space-time}
  \partial^{\beta}_t u=\left(a^{ij}u_{x^ix^j}+b^iu_{x^i}+cu+f(u)\right)
    + \sum_{k=1}^\infty\partial^{\gamma}_t \int^t_0  h(u) \, dB_t
    \end{equation}
    where  the coefficients $a^{ij}, b^ic$ and   are functions depending on
$(\omega,t,x)$,  the functions $f$ and $h$ depends on $(\omega,t,x)$ and the unknown $u$, and
    $B_t$ is a space-time white noise.

    Let $\{\eta^k:k=1,2,\cdots\}$ be an orthogonal basis of $L_2(\bR^d)$. Then (at least formally)
    $$
    B_t=\sum_{k=1}^{\infty} \eta^k W^k_t
    $$
    where
    $W^k_t:=(B_t,\eta^k)_{L_2}$are independent one dimensional Wiener processes. Hence one can rewrite (\ref{space-time}) as
    $$
    \partial^{\beta}_t u= \left(a^{ij}u_{x^ix^j}+b^iu_{x^i}+cu+f(u)\right)
    + \sum_{k=1}^\infty\partial^{\gamma}_t \int^t_0  h(u) \eta^k \, dW^k_t.
    $$

Denote
$$
g^k(t,x,u)=h(t,x,u)\eta^k(x).
$$
To apply Theorem \ref{main}, we only need to find  $\sigma$ and conditions on $h$ so that (\ref{Sobolev}) holds.
The following lemma is a consequence of  \cite[Lemma 8.4]{Kr99}.
\begin{lemma}
          \label{lemma 1.26}
Let $\gamma<-1/2$. Then
$$
\|g(t, \cdot, u(\cdot))-g(t, \cdot, v(\cdot))\|_{H^{\gamma}_2(\ell_2)}\leq N \|h(t, \cdot, u(\cdot))-h(t, \cdot, v(\cdot))\|_{L_2}.
$$
\end{lemma}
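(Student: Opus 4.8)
The plan is to reduce everything to a single scalar estimate and then exploit Parseval's identity for the orthonormal basis $\{\eta^k\}$. Writing $H:=h(t,\cdot,u(\cdot))-h(t,\cdot,v(\cdot))\in L_2$ and recalling $g^k=h\,\eta^k$, we have $g^k(t,\cdot,u(\cdot))-g^k(t,\cdot,v(\cdot))=H\eta^k$, so by the definition \eqref{e:2.22n} of the $H^{\gamma}_2(\ell_2)$-norm the claim is equivalent to
\begin{equation*}
\|(H\eta^k)_{k\ge1}\|^2_{H^{\gamma}_2(\ell_2)}=\sum_{k=1}^\infty\|H\eta^k\|^2_{H^{\gamma}_2}\le N\|H\|^2_{L_2}.
\end{equation*}
First I would record that, since $\gamma<-1/2$ (with $d=1$ for the space-time white noise, so that $-1/2=-d/2$), the Bessel kernel $G:=\cF^{-1}[(1+|\xi|^2)^{\gamma/2}]$ lies in $L_2(\bR^d)$, because $\|G\|^2_{L_2}=\int_{\bR^d}(1+|\xi|^2)^{\gamma}\,d\xi<\infty$ precisely when $\gamma<-d/2$.

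The negative-order operator then acts by convolution, $(1-\Delta)^{\gamma/2}\phi=(2\pi)^{-d/2}G*\phi$, so for (a.e.) fixed $x$,
\begin{equation*}
(1-\Delta)^{\gamma/2}(H\eta^k)(x)=(2\pi)^{-d/2}\int_{\bR^d}G(x-y)H(y)\eta^k(y)\,dy=(2\pi)^{-d/2}(F_x,\eta^k)_{L_2},
\end{equation*}
where $F_x(y):=G(x-y)H(y)$. The decisive step is to sum in $k$ \emph{before} integrating in $x$: for each $x$ with $F_x\in L_2$, Parseval's identity for the orthonormal basis $\{\eta^k\}$ gives $\sum_k|(F_x,\eta^k)_{L_2}|^2=\|F_x\|^2_{L_2}=\int_{\bR^d}G(x-y)^2H(y)^2\,dy$. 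Summing over $k$, integrating in $x$, and interchanging the order of integration by Tonelli (the integrand is nonnegative) then yields
\begin{equation*}
\sum_{k=1}^\infty\|H\eta^k\|^2_{H^{\gamma}_2}=(2\pi)^{-d}\int_{\bR^d}\!\!\int_{\bR^d}G(x-y)^2H(y)^2\,dy\,dx=(2\pi)^{-d}\|G\|^2_{L_2}\,\|H\|^2_{L_2},
\end{equation*}
which is the asserted inequality with $N=(2\pi)^{-d/2}\|G\|_{L_2}$.

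I expect the only genuinely delicate point to be the integrability $G\in L_2$, i.e. the hypothesis $\gamma<-1/2$ ($=-d/2$ in dimension one), which is exactly what makes the space-time white noise amenable to the $L_2$-theory; once that is in hand the summation over the basis is essentially free, being nothing but Parseval's identity and hence independent of the particular choice of orthonormal basis $\{\eta^k\}$. This is of course the mechanism behind \cite[Lemma 8.4]{Kr99}, so an alternative and shorter route is simply to quote that lemma directly; I would nonetheless include the short computation above, since it makes transparent both the role of the dimension restriction and the basis-independence of the bound.
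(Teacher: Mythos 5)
Your argument is correct. The paper itself gives no proof of this lemma --- it simply cites \cite[Lemma 8.4]{Kr99} --- and what you have written is precisely the standard argument behind that cited result: the hypothesis $\gamma<-d/2$ puts the Bessel kernel $G=\cF^{-1}[(1+|\xi|^2)^{\gamma/2}]$ in $L_2$, the operator $(1-\Delta)^{\gamma/2}$ becomes convolution against $G$, and Parseval's identity in the $\eta^k$-variable collapses the $\ell_2$-sum to $\|G(x-\cdot)H(\cdot)\|_{L_2}^2$ before the $x$-integration, giving $N=(2\pi)^{-d/2}\|G\|_{L_2}$. All the measure-theoretic points are handled properly: $F_x\in L_2$ for a.e.\ $x$ by Tonelli, and the interchange of sum and integral is legitimate since everything is nonnegative. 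Your remark that the stated threshold $-1/2$ is really $-d/2$ and hence implicitly presumes $d=1$ (as is standard for space-time white noise in the $L_2$ framework, and as the paper tacitly assumes in Subsection 5.2) is a correct and worthwhile observation rather than a gap.
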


The following is an easy consequence of Theorem \ref{main} and Lemma \ref{lemma 1.26}.
Recall that $f_0=f(t,x,0)$. We also denote
$
h_0=h(t,x,0).
$
\begin{corollary}
              \label{thm space-time}
  Let
  \begin{equation}
                   \label{eqn last}
  \sigma+\sigma_0<-1/2, \quad \sigma+2>0.
  \end{equation}
  Assume
  $$
  |f(t,x,v_1)-f(t,x,v_2)|+|h(t,x,v_1)-h(t,x,v_2)|\leq K|v_1-v_2| \quad \forall \omega,t, x, v_1, v_2,
  $$
  and  Assumptions \ref{ass 1} and  \ref{ass 3} hold  with $\sigma$ satisfying (\ref{eqn last}). Then
  equation (\ref{space-time}) with initial data $u_0\in U^{\sigma+1}_2$ has a unique solution $u$, and for this solution we have
  $$
  \|u\|_{\bH^{\sigma+2}_2(T)}\leq N
  \left(\|f_0\|_{\bH^{\sigma}_2(T)}+\|h_0\|_{\bL_2(T)}+\|u_0\|_{U^{\sigma+1}_2}\right).
  $$
  \end{corollary}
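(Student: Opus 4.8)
The plan is to recast the space-time white noise equation as a special case of the non-divergence form SPDE (\ref{non-div eqn}) and then quote Theorem \ref{main}; the only point requiring real work is to show that the $\ell_2$-valued coefficient $g=h\eta$ meets the hypotheses of that theorem, and this is exactly where Lemma \ref{lemma 1.26} enters. First I would expand the driving noise along the orthonormal basis, $B_t=\sum_k\eta^k W^k_t$ with $W^k_t=(B_t,\eta^k)_{L_2}$ independent one-dimensional Wiener processes, so that (\ref{space-time}) becomes (\ref{non-div eqn}) with $\sigma^{ijk}=\mu^{ik}=\nu^k=0$ and $g^k(t,x,u)=h(t,x,u)\eta^k(x)$. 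Because the stochastic second-order, first-order and zeroth-order coefficients all vanish, Assumption \ref{ass 1} and parts (i)--(iv) of Assumption \ref{ass 3} reduce to the stated hypotheses on $a^{ij},b^i,c$, and the whole matter comes down to verifying the Lipschitz-type bound (\ref{Sobolev}) of Assumption \ref{ass 3}(v) for this $f$ and $g$.

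For the stochastic term I would take the index in Lemma \ref{lemma 1.26} to be $\sigma+\sigma_0$; the first inequality in (\ref{eqn last}) is precisely its hypothesis $\sigma+\sigma_0<-1/2$, so the underlying multiplier bound $\|\phi\eta\|_{H^{\sigma+\sigma_0}_2(\ell_2)}\le N\|\phi\|_{L_2}$ gives
$$
\|g(t,\cdot,u(\cdot))-g(t,\cdot,v(\cdot))\|_{H^{\sigma+\sigma_0}_2(\ell_2)}\le N\|h(t,\cdot,u(\cdot))-h(t,\cdot,v(\cdot))\|_{L_2}\le NK\|u-v\|_{L_2}.
$$
The second inequality in (\ref{eqn last}) yields $\sigma\in(-2,0)$ (indeed $\sigma_0\ge 0$ already forces $\sigma<-1/2$), so $0$ lies strictly between $\sigma$ and $\sigma+2$ and the interpolation inequality $\|w\|_{L_2}\le\varepsilon\|w\|_{H^{\sigma+2}_2}+K_3\|w\|_{H^{\sigma}_2}$ holds for each $\varepsilon>0$; this absorbs the $L_2$-norm above into the form required by (\ref{Sobolev}). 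For the drift, $\sigma<0$ gives $\|\cdot\|_{H^{\sigma}_2}\le\|\cdot\|_{L_2}$, whence the scalar Lipschitz bound on $f$ produces $\|f(u)-f(v)\|_{H^{\sigma}_2}\le K\|u-v\|_{L_2}$, again absorbed by the same interpolation. Thus Assumption \ref{ass 3}(v) holds.

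All hypotheses of Theorem \ref{main} being in place, I would invoke it to obtain a unique solution $u\in\cH^{\sigma+2}_2(T)$ and the estimate $\|u\|_{\bH^{\sigma+2}_2(T)}\le N(\|u_0\|_{U^{\sigma+1}_2}+\|f_0\|_{\bH^{\sigma}_2(T)}+\|g_0\|_{\bH^{\sigma+\sigma_0}_2(T,\ell_2)})$. To put the bound in the stated form I would apply the same multiplier estimate once more, now to $g_0^k=h_0\eta^k$, giving $\|g_0\|_{H^{\sigma+\sigma_0}_2(\ell_2)}\le N\|h_0\|_{L_2}$ and hence the $\|h_0\|_{\bL_2(T)}$ term. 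The genuine difficulty here is conceptual rather than computational: one must recognize that the rough $\ell_2$-object $(h\eta^k)_k$ coming from space-time white noise only lands in a Sobolev space of negative order, and that the two constraints in (\ref{eqn last}) are exactly what is needed---the first to apply Lemma \ref{lemma 1.26}, the second to keep $\sigma\in(-2,0)$ so that the interpolation closes. Everything else is a direct appeal to Theorem \ref{main}.
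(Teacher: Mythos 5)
Your proposal is correct and follows essentially the same route as the paper: expand the white noise in the orthonormal basis, verify Assumption \ref{ass 3}(v) by combining Lemma \ref{lemma 1.26} (using $\sigma+\sigma_0<-1/2$) with the interpolation $\|w\|_{L_2}\le \varepsilon\|w\|_{H^{\sigma+2}_2}+K\|w\|_{H^{\sigma}_2}$ (using $\sigma<0<\sigma+2$), and then invoke Theorem \ref{main}. Your extra remark that the same multiplier bound converts $\|g_0\|_{\bH^{\sigma+\sigma_0}_2(T,\ell_2)}$ into $\|h_0\|_{\bL_2(T)}$ is a detail the paper leaves implicit, but it is exactly what is needed.
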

\begin{proof}
It is enough to note that, since $\sigma+2>0$,
\begin{eqnarray*}
&&\|f(t, \cdot, u(\cdot))-f(t, \cdot, v(\cdot))\|_{H^{\sigma}_2}+\|g(t, \cdot, u(\cdot))-g(t, \cdot, v(\cdot))\|_{H^{\sigma+\sigma_0}_2(\ell_2)}\\
&\leq& N\|u-v\|_{L_2}\leq
 \varepsilon \|u-v\|_{H^{\sigma+2}_2}+K\|u-v\|_{H^{\sigma}_2}.
 \end{eqnarray*}
 The corollary is proved.
\end{proof}

The constant $\sigma+2$ gives
the regularity of solution $u$.
To see how smooth the above solution is,
we recall
$$
\sigma_0:=\gamma_0+ (\varepsilon_0 1_{\gamma=1/2})
=\begin{cases}
 (2\gamma-1) /\beta \quad &\hbox{if } \gamma >1/2\\
 \varepsilon_0 &\hbox{if } \gamma =1/2\\
 0 &\hbox{if } \gamma <1/2.
 \end{cases}
$$
Since $\sigma+2=(\sigma+\sigma_0)+(2-\sigma_0)<-1/2+(2-\sigma_0)$, it follows
$$
\sigma+2 < \begin{cases}
 \frac{3}{2}-\frac{2\gamma-1}{\beta} \quad &\hbox{if } \gamma >1/2\\
 \frac{3}{2} &\hbox{if } \gamma \leq 1/2 .
 \end{cases}
$$
Since we are assuming $\sigma+2>0$, we need
$$
\gamma<\frac{1}{2}+\frac{3}{4}\beta,
$$
which is slightly stronger than (\ref{e:1.7}).

\bigskip

{\bf Acknowledgement.} We thank T. Kumagai for  discussions on topics related to deterministic fractional time equations.

\end{document}